\DeclareRobustCommand{\qedify}[1]{%
  \ifmmode \quad\hbox{#1}
  \else
    \leavevmode\unskip\penalty9999 \hbox{}\nobreak\hfill
    \quad\hbox{#1}%
  \fi
}
\newenvironment{example}{\begin{example*}\pushQED{\qedify{$\diamondsuit$}}}{\popQED\end{example*}}
\numberwithin{equation}{section}
\newcommand{\sV}{\mathcal{V}}
\newcommand{\bfx}{\mathbf{x}}
\newcommand{\bfu}{\mathbf{u}}
\newcommand{\bfv}{\mathbf{v}}
\DeclareMathOperator{\rank}{rank}
\DeclareMathOperator{\GL}{GL}
\DeclareMathOperator{\uMat}{\underline{Mat}}
\DeclareMathOperator{\Star}{star}
\newcommand{\superimpose}[2]{{\ooalign{$#1\@firstoftwo#2$\cr\hfil$#1\@secondoftwo#2$\hfil\cr}}}
\newcommand{\ttimes}{\hspace{0.3mm}{\mathpalette\superimpose{{\circ}{\cdot}}}\hspace{0.3mm}}
\begin{document}

\title{Matroid Products in Tropical Geometry}

\author{Nicholas Anderson}
\address{School of Mathematical Sciences, Queen Mary University of London, Mile End Road, London E1 4NS, United Kingdom}
\email{n.c.anderson@qmul.ac.uk}

\begin{abstract}
Symmetric powers of matroids were first introduced by Lovasz \cite{Lovasz77} and Mason \cite{MGlueing} in the 1970s, where it was shown that not all matroids admit higher symmetric powers. Since these initial findings, the study of matroid symmetric powers has remained largely unexplored. In this paper, we establish an equivalence between valuated matroids with arbitrarily large symmetric powers and tropical linear spaces that appear as the variety of a tropical ideal. In establishing this equivalence, we additionally show that all tropical linear spaces are connected through codimension one. These results provide additional geometric and algebraic connections to the study of matroid symmetric powers, which we leverage to prove that the class of matroids with second symmetric power is minor closed and has infinitely many forbidden minors.

\commentout{FOR USE IN PUBLISHED VERSION:
\smallskip
\noindent \textbf{Data Availability Statement.} Data sharing not applicable to this article as no datasets were generated or analysed during the current study.

\smallskip
\noindent \textbf{Declarations} The author is funded by Queen Mary University of London through the Queen Mary Principal's Award.

\smallskip
\noindent \textbf{Keywords.} tropical geometry, tropical ideal, matroid, matroid tensor product, matroid symmetric product. }
\end{abstract}

\maketitle

\section{Introduction}\label{sec:intro}

In order to approach this problem, we revisit some early works of Mason \cite{MGlueing} and Las Vergnas \cite{LV81} on symmetric and tensor products of matroids. Both authors show that matroid powers do not necessarily exist. In the case of LasVergnas, it is shown that nontrivial tensor products with the V\'amos matroid never exist, and for Mason a specific point configuration is shown to lack large symmetric powers. Moreover, we can deduce from the nonexistence of matroid tensor powers the non-existence of second matroid symmetric powers; specifically, since the V\'amos matroid $V_8$ has no tensor powers, $V_8\oplus V_8$ has no second symmetric power.

We will follow the min-plus convention for the tropical semiring, which is the set $\T:= \R\cup \set{\infty}$ together with operations $\oplus$ and $\ttimes$ with $a\oplus b= \min(a,b)$ and $a\ttimes b = a+b$. Our approach in this paper naturally extends the work of Mason to the context of \textbf{valuated matroids}, which are subsemimodules $\sV\subset \T^n$ satisfying the following \textbf{vector elimination axiom}
    \begin{quote}
        For any vectors $G,H\in \sV$ and any $e\in [n]$ with $G_e=H_e$, there exists a vector $F\in \sV$ such that $F_e=\infty$, and $F_{e'}\geq\min{G_{e'},H_{e'}}$ for all $e'\neq e$ and with equality whenever $G_{e'}\neq H_{e'}$.
    \end{quote}
A \textbf{matroid} presented by its vectors is analogous to a valuated matroid for which the subsemimodule is generated by vectors $G$ such that $G_e\in \set{0,\infty}$ for all $e\in [n]$.

Throughout the paper we will make use of classical matroid theory and cryptomorphisms such as those for bases, circuits, spanning sets, flats, and hyperplanes as well as standard constructions such as taking minors and duals. We refer the reader to \cite{Oxley06} for a comprehensive background. We will define any relevant analogues for valuated matroids, such as the basis cryptomorphism and minors in \coloref{sec:LinTropIdeals}.

In this paper we will be primarily concerned with valuated matroid symmetric powers. Before providing this definition, we should discuss some useful notation. If $E$ is any set, the $d\th$ symmetric power of $E$, which we denote $\Sym_d(E)$, is the set of unordered $d$-tuples of elements of $E$. The motivation for defining $\Sym_d(E)$ comes from commutative algebra, where the set of degree $d$ monomials in $\set{x_1,\dots,x_n}$, denoted $\Mon_d(n)$, is precisely the set $\Sym_d(\set{x_1,\dots,x_n})$.

If we fix an element $\alpha \in \Sym_i(E)$, there is an injective ``multiplication" map
    \begin{align*}
        *_\alpha: \Sym_d(E)&\to \Sym_{d+i}(E)\\
        \beta&\mapsto (\alpha,\beta).
    \end{align*}
Since the coordinates of elements of $\Sym_{d+i}$ are unordered, we will write $(\alpha,\beta)$ as $\alpha\beta$, taking after commutative multiplication notation. 

We will often consider the subset of elements of $\Sym_{d+i}(E)$ that are divisible by some $\alpha\in \Sym_{i}(E)$, which we denote as

$$\Sym_{d+i}(E)|_{\alpha} := \set{e\in \Sym_{d+i}(E): e= \alpha x}.$$

Given a valuated matroid $\sV$ on $\Sym_{d+i}(E)$ we further extend this notation and write $\sV|_{\alpha}$ instead of $\sV|_{\Sym_{d+1}(E)|_{\alpha}},$ where the restriction of $\sV$ to the set $T\subset [n]$ is given by the subsemimodule 
$$S|_T:=\sV\cap \set{\pi_T(x)\in \T^T : x_i=\infty \text{ for all } i\notin T}.$$

\begin{definition}\label{def:introSymprod}
Fixing a matroid $\sV$ on a ground set $E$, a $\mathbf{d}^{\text{\textbf{th}}}$ \textbf{symmetric quasi power} of $\sV$ is a sequence of matroids $(\sV_1,\dots,\sV_d)$ with $\sV_1 = \sV$ and for all $1\leq i \leq d$
\begin{itemize}
    \item $E(\sV_i) = \Sym_i(E),$
    \item for all $\alpha\in \Sym_{d-i}(E)$ for which $\alpha$ is not divisible by a loop the multiplication map $*_\alpha$ given by $y\mapsto \alpha x$ induces the isomorphism
    \begin{equation} 
        \sV_i\cong \sV_d|_{\alpha}
    \end{equation}
    otherwise
    \begin{equation}
        \rk(\sV_d|_{\alpha})=0.
    \end{equation}
    
\end{itemize} 
Moreover, if $(\sV_1,\dots,\sV_d)$ is a symmetric quasi power of $\sV$ with

\begin{equation}
\rk(\sV_d)= \binom{\rk(\sV)+d-1}{d}
\end{equation}

then we say that $(\sV_1,\dots,\sV_d)$ is a $\mathbf{d}^{\text{\textbf{th}}}$ \textbf{symmetric power} of $\sV$.
\end{definition}

In \coloref{sec:MatProds} we will discuss relevant background on tensor and symmetric powers, establish geometric connections, and address how our definition differs from that of Mason in \cite{MGlueing}. We further prove the following theorem.

\begin{theorem}\label{thm:introThm0}
    The family of matroids with $d\th$ symmetric power is minor closed and has infinitely many forbidden minors.
\end{theorem}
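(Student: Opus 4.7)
The plan is to handle the two claims separately. For minor-closure, I would argue inductively, reducing to the case of a single deletion or single contraction. For the infinite family of forbidden minors, I would exhibit an infinite antichain of minor-minimal obstructions, bootstrapping from the Vámos matroid mentioned in the introduction.

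For the minor-closed part, fix a $d$th symmetric power $(\sV_1,\dots,\sV_d)$ of $\sV$ and an element $e \in E$. For deletion, define $\sV_i' := \sV_i|_{\Sym_i(E \setminus e)}$. Since the multiplication map $*_\alpha$ carries $\Sym_i(E \setminus e)$ into $\Sym_d(E \setminus e)|_\alpha$ whenever $\alpha \in \Sym_{d-i}(E \setminus e)$, restricting the isomorphism $\sV_i \cong \sV_d|_\alpha$ to the $e$-free subsets yields $\sV_i' \cong \sV_d'|_\alpha$; the vanishing-rank clause for loop-divisible $\alpha$ is handled identically. The rank identity $\rk(\sV_d') = \binom{\rk(\sV \setminus e) + d - 1}{d}$ splits on whether $e$ is a loop, coloop, or neither of $\sV$. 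When $e$ is a loop, every element of $\Sym_d(E)$ divisible by $e$ is already a loop of $\sV_d$, so the deletion preserves rank. When $e$ is a coloop, the isomorphism $\sV_d|_{\Sym_d(E)|_e} \cong \sV_{d-1}$ combined with Pascal's rule $\binom{r+d-1}{d} - \binom{r+d-2}{d-1} = \binom{r+d-2}{d}$ gives the correct drop. Otherwise, a basis of $\sV$ avoiding $e$ produces the required basis $\Sym_d(B) \subset \sV_d'$. Contraction proceeds in parallel: define $\sV_i'' := \sV_i / \Sym_i(E)|_e$, and use the same isomorphism $\sV_d|_{\Sym_d(E)|_e} \cong \sV_{d-1}$ to check the axioms and rank count.

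For the infinite family of forbidden minors, I would exhibit a family $\{M_n\}_{n \geq n_0}$ of matroids such that each $M_n$ fails to admit a $d$th symmetric power while every proper minor of $M_n$ does, and such that no $M_n$ is a minor of $M_{n'}$ for $n \neq n'$. The natural starting point is the Vámos obstruction: for $d = 2$ the matroid $V_8 \oplus V_8$ already fails. To build an infinite family I would look at generalizations such as sparse paving matroids obtained by relaxing circuit-hyperplanes of larger ambient matroids, or matroids built by generalized parallel connections of $V_8$ with uniform matroids of increasing rank, each of which should inherit a Vámos-style obstruction to a $d$th symmetric power. The main obstacle is verifying that such a family actually consists of minor-minimal obstructions. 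Here I would lean on the tropical-geometric equivalence established earlier in the paper: admitting arbitrarily high symmetric powers corresponds to being the variety of a tropical ideal, so one can certify ``has a $d$th symmetric power'' on each proper minor of $M_n$ by exhibiting a tropical realization, while non-admissibility of $M_n$ itself follows from a family of incompatible local valuations in the spirit of the original Vámos argument. Showing that the resulting $M_n$ are pairwise minor-incomparable is the most delicate part, and is where I expect the induction on $n$ to require the most care.
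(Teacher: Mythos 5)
Your outline splits the two claims the same way the paper does, but both halves diverge substantively from the paper's argument and both contain gaps.

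For minor-closure, your deletion argument is essentially correct as a direct combinatorial check: the quasi-power axioms restrict to $\Sym_i(E\setminus e)$ cleanly, and the rank count works in every case (when $e$ is not a coloop, a basis $B$ avoiding $e$ gives the full-rank witness $\Sym_d(B)\subset\sV_d'$; when $e$ is a coloop, $\Sym_d(B\setminus e)$ is independent of the correct cardinality and Proposition~\ref{prop:masonFullRank} gives the matching upper bound). The contraction case, however, is not ``parallel'' and is where the gap lies. Setting $\sV_d'' := \sV_d/(e\cdot\Sym_{d-1}(E))$, the rank count $\rk(\sV_d'') = \rk(\sV_d) - \rk(\sV_d|_e) = \binom{r+d-1}{d} - \binom{r+d-2}{d-1}$ does give $\binom{(r-1)+d-1}{d}$ via Pascal's rule, but you still need the quasi-power isomorphisms. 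Fix $\alpha\in\Sym_{d-i}(E\setminus e)$. Writing both sides as minors of $\sV_d$, you must compare $\sV_d|_{\alpha\Sym_i(E\setminus e)\cup e\Sym_{d-1}(E)}\,/\,(e\Sym_{d-1}(E))$ against $\sV_d|_{\alpha\Sym_i(E)}\,/\,(\alpha e\Sym_{i-1}(E))$, and these contract by different sets of different ranks; there is no formal minor-calculus identity that makes them equal without an argument. The paper avoids this entirely by working geometrically: Proposition~\ref{prop:symprodCharacterisations} characterizes full symmetric powers by the equality $V(\sV_d)=B(\sV)$, and Theorem~\ref{thm:minorClosed}/Lemma~\ref{lem:geometricDeletion} push this forward under the specialization $x_n=\infty$ (contraction) and the projection $\pi_{[n-1]}$ (deletion). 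That route handles both minor operations at once and sidesteps the quasi-power bookkeeping.

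For the infinite family of forbidden minors, your sketch identifies the right starting point ($V_8\oplus V_8$) but then gestures at constructions (sparse paving relaxations, generalized parallel connections) without the combinatorial input needed to make them work. The paper's proof of Theorem~\ref{thm:sympowForbiddenMinors} rests on two concrete ingredients you do not name: (i) the Lindstr\"om family $M(r)$, for which Proposition~\ref{prop:VxU23} shows no tensor product with $U_{2,3}$ exists, so by the Draisma--Rinc\'on transfer (if $M$, $N$ have no second tensor power then $M\oplus N$ has no second symmetric power) each $M(r)\oplus U_{2,3}$ with $r\ge 4$ fails to have a second symmetric power; and (ii) the structural fact that for $r_1 > r_2 \ge 4$ the matroids $M(r_1)\oplus U_{2,3}$ and $M(r_2)\oplus U_{2,3}$ share no non-realizable minors, which forces the forbidden minors inside them to be pairwise distinct. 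Your proposal is missing both the explicit family and the tensor-to-symmetric transfer lemma, and your stated plan of certifying proper minors via tropical realizations is exactly the hard verification the Lindstr\"om structure is designed to avoid (all relevant proper minors are realizable over a field, so their symmetric powers come for free via Macaulay matrices).
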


\coloref{thm:introThm0} partly relies on the connection we make between symmetric powers and the geometry of their underlying matroids. To establish this connection we use the theory of tropical ideals.

\begin{definition}\label{def:tropicalIdeals}
    Let $I\subset \T[x_1,\dots,x_n]$ be a homogeneous ideal. Then $I$ is called a \textbf{tropical ideal} if $I_d:=I|_{\Mon_d(n)}$ is a valuated matroid in $\T^{\Mon_d(n)}$ for all $d\in \N$. 
\end{definition}

The most simple examples of tropical ideals are \textbf{realizable tropical ideals}, which are the tropicalizations of polynomial ideals $\trop(J)$, where $J$ is an ideal in $k[x_1,\dots,x_n]$ for some non-archimedean valued field $k$. We refer the reader to \cite{MR18} and \cite{MR20} for explicit examples, extensions to other polynomial semirings, and for a comprehensive background.

The underlying geometry of matroid symmetric powers and of tropical ideals is vital to establishing their connection. If $p\in \T[x_1,\dots,x_n]$ is a polynomial, its variety is given by
$$V(p)=\set{x\in \T^n: p \text{ attains its minimum at least twice at } x \text{ or } p(x)=\infty},$$
which is a balanced polyhedral complex in $\T^n$. The variety of a tropical ideal $I\subset \T[x_1,\dots,x_n]$ is then given by
$$V(I)=\bigcap_{p\in I}V(p),$$
which is also a balanced polyhedral complex, with weights defined as in \cite{MR20}. In this paper we refer to any balanced polyhedral complex in $\T^n$ as a \textbf{tropical variety}, and we distinguish those tropical varieties that arise as the variety of a tropical ideal by referring to them as \textbf{tropically realizable varieties}.

To relate tropical geometry explicitly to valuated matroids, we consider that $\T^n\cong \T[x_1,\dots,x_n]_1$ under the map $e_i\mapsto x_i$. In particular, a valuated matroid $\sV\subset \T^n$ can be seen as a collection of polynomials in $\T[x_1,\dots,x_n].$ Explicitly, given a vector $v\in \sV$ we have the associated vector-polynomial
$$p_v = \bigoplus_{i\in [n]} v_ix_i\in \T[x_1,\dots,x_n].$$
The \textbf{tropical linear space} associated to $\sV$ is then given by
$$\trop(\sV) := \bigcap_{v\in \sV}V(p_v),$$
which is a balanced polyhedral complex in $\T^n$ with constant weights $1$.

\begin{theorem}\label{thm:introThm1}
Let $\sV$ be a matroid. The tropical linear space $\trop(\sV)$ is the variety of a tropical ideal if and only if a $d\th$ symmetric power of $\sV$ exists for every $d\in \N$.
\end{theorem}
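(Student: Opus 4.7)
The plan is to prove each direction separately, using the correspondence between the graded pieces of a tropical ideal and the sequence of valuated matroids in a symmetric power, with closure of an ideal under monomial multiplication mirroring the multiplication map axiom.

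For the forward direction, suppose $\trop(\sV) = V(I)$ for a tropical ideal $I$. I would set $\sV_d := I_d$, the degree-$d$ graded piece, which is a valuated matroid on $\Mon_d(n) \cong \Sym_d([n])$ by the definition of a tropical ideal. After replacing $I$ by an enlargement with the same variety that satisfies $I_1 = \sV$ (a step I expect to be justified using the codimension-one connectivity of $\trop(\sV)$ shown earlier in the paper), the axioms to verify for $(\sV_d)$ are: the multiplication map $*_\alpha : \sV_i \to \sV_d|_\alpha$ is an isomorphism of valuated matroids, where injectivity is immediate and surjectivity amounts to the implication ``$\alpha \cdot q \in I_d \Rightarrow q \in I_i$,'' which can likewise be forced by the above enlargement; and the rank identity $\rk(\sV_d) = \binom{\rk(\sV) + d - 1}{d}$, which should follow from a Hilbert function computation using that $V(I) = \trop(\sV)$ has the expected tropical dimension $\rk(\sV) - 1$.

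For the reverse direction, given symmetric powers $(\sV_d)_{d \in \N}$ of $\sV$ (loopless without loss of generality), I would set $I := \bigoplus_d \sV_d \subset \T[x_1,\dots,x_n]$ and verify that $I$ is a tropical ideal with $V(I) = \trop(\sV)$. Closure of $I$ under tropical addition is immediate within each graded piece. Closure under multiplication by a monomial $m$ of degree $k$ is the content of the multiplication map: for $p \in \sV_d$, the product $m \cdot p$ equals the image of $p$ under $*_m : \sV_d \to \sV_{d+k}|_m$, hence lies in $\sV_{d+k}$. That each $I_d = \sV_d$ is a valuated matroid is built into the hypothesis. The inclusion $V(I) \subseteq \trop(\sV)$ is immediate from $\sV_1 = \sV \subseteq I$.

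The main obstacle I anticipate is the reverse inclusion $\trop(\sV) \subseteq V(I)$, namely showing that every $p \in \sV_d$ attains its tropical minimum at least twice at every $x \in \trop(\sV)$. My approach is induction on $d$: if some $p \in \sV_d$ has a unique minimizer $\alpha_0$ at $x$, then choosing a variable $x_i$ dividing $\alpha_0$ and using the multiplication isomorphism $\sV_{d-1} \cong \sV_d|_{x_i}$ together with the valuated matroid axioms for $\sV_d$, I would extract a polynomial in $\sV_{d-1}$ witnessing the same failure at $x$; iterating down to $d=1$ would contradict $x \in \trop(\sV)$. The codimension-one connectivity of $\trop(\sV)$ is likely essential at this step, both to move $x$ freely within $\trop(\sV)$ without changing the minimizer and to propagate the initial matroid structure of $\sV_d$ consistently across adjacent cones.
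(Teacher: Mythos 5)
Your reverse direction breaks down at its first step. The hypothesis supplies, for each $d$, \emph{some} $d$th symmetric power $(\sV,\sV_{2,d},\dots,\sV_{d,d})$, and nothing forces these to be compatible for different $d$: the degree-$i$ layer $\sV_{d,d}|_\alpha$ determined by the $d$th power need not coincide with the chosen $i$th power. Setting $I:=\bigoplus_d \sV_d$ and, more to the point, your verification of closure under multiplication by a monomial $m$ of degree $k$ silently assume this compatibility: the axiom for the $(d+k)$th power only identifies $\sV_{d+k}|_m$ with its \emph{own} degree-$d$ layer, not with your chosen $\sV_d$, so $m\cdot p\in\sV_{d+k}$ for $p\in\sV_d$ does not follow. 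Producing one coherent tower from an incoherent family is exactly where the paper works hardest: it proves that the space $\Dr_d(\sV)$ of $d$th symmetric powers is a compact polyhedral complex (\coloref{prop:closedcompact}) and that restriction of Pl\"ucker vectors passes to limits (\coloref{prop:plConvergence}), and then builds a single sequence $(\sV,\sV_2,\sV_3,\dots)$ degree by degree from accumulation points, checking at each stage that the limit is still a symmetric power; only after that does a candidate ideal exist.

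The second gap is the inclusion $\trop(\sV)\subseteq V(I)$. The extraction you propose --- from $p\in\sV_d$ with a unique minimizing monomial at $x$, produce a witness in $\sV_{d-1}$ using $\sV_{d-1}\cong\sV_d|_{x_i}$ and vector elimination --- uses only data available for symmetric \emph{quasi} powers, and for those the statement is false. For instance, $(U_{2,3},U_{2,6})$ is a second symmetric quasi power of $U_{2,3}$ (every $3$-element restriction of $U_{2,6}$ is $U_{2,3}$), yet at $\omega=(0,0,1)\in B(U_{2,3})$ the circuit polynomial $x_1^2\oplus x_1x_3\oplus x_3^2$ attains its minimum only once, and there is no degree-$1$ witness of failure since $\omega\in\trop(U_{2,3})$. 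So any correct argument must use the rank condition $\rk(\sV_d)=\binom{\rk(\sV)+d-1}{d}$ in an essential way, which your sketch never does. The paper obtains the inclusion not pointwise but through dimension theory: saturation and the full Hilbert function (\coloref{cor:LTIsaturated}, \coloref{prop:linearHFequiv}) make $V(I)$ a tropical subvariety of $\trop(\sV)$ of dimension $\rk(\sV)-1$, and codimension-one connectivity enters via \coloref{cor:minkWeights}, which shows a tropical linear space has a one-dimensional space of top Minkowski weights and hence no proper tropical subvariety of its own dimension, forcing $V(I)=\trop(\sV)$. Your forward direction is essentially the paper's (\coloref{cor:hfSymprodEquiv}), except that the vague ``enlargement'' is not how the paper proceeds: it deduces $I_1=\sV$ and saturation from linearity rather than enlarging $I$.
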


If the variety $\trop(\sV)$ is of the form $V(I)$ for a tropical ideal $I$, as in \coloref{thm:introThm1}, then $\trop(\sV)$ is a tropically realizable variety by definition. We extend this terminology to the matroid $\sV$ itself and say that $\sV$ is a \textbf{tropically realizable matroid} whenever $\trop(\sV)$ is a tropically realizable variety. Tropically realizable varieties behave nicely under coordinate projections and stable intersections with hyperplanes \cite{MR20}, which generalize the standard matroid minor operations of deletion and contraction, respectively, to tropical ideals. We will further see that, in the case of tropically realizable matroids, these geometric operation extend naturally to matroid minor operations on symmetric powers.

\begin{theorem}\label{thm:introThm2}
Tropically realizable matroids are a minor closed class. In particular, if $\sV$ is a matroid on $E$ and $(\sV_1,\sV_2,\dots,\sV_d)$ is a $d\th$ symmetric power of $\sV$, then for any minor $\sV'$ of $\sV$, there is a minor $\sV'_i$ of each $\sV_i$ for which $(\sV',\sV'_2,\dots, \sV'_d)$ is a $d\th$ symmetric power of $\sV'$.
\end{theorem}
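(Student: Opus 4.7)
The proof has two parts. The second (constructing a $d$th symmetric power on an arbitrary minor of $\sV$) is the main content, and the first (minor-closure of the class of tropically realizable matroids) then follows formally from Theorem~\ref{thm:introThm1}: if $\sV$ is tropically realizable it admits $d$th symmetric powers for every $d$; the second part transfers these to any minor $\sV'$; and a second application of Theorem~\ref{thm:introThm1} upgrades $\sV'$ to a tropically realizable matroid. By induction on the size of the minor, the second part reduces to a single deletion $\sV' = \sV \setminus e$ or a single contraction $\sV' = \sV / e$.

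For deletion, I would set $\sV'_i := \sV_i|_{\Sym_i(E\setminus\{e\})}$, the restriction to monomials not involving $e$. The symmetric power axiom is almost formal: for $\alpha \in \Sym_{d-i}(E\setminus\{e\})$ restrictions to subsets commute, and the given isomorphism $*_\alpha : \sV_i \xrightarrow{\sim} \sV_d|_\alpha$ identifies $\Sym_i(E\setminus\{e\})$ with $\Sym_d(E\setminus\{e\}) \cap \Sym_d(E)|_\alpha$, so $\sV'_d|_\alpha \cong \sV_i|_{\Sym_i(E\setminus\{e\})} = \sV'_i$.

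For contraction, I would set $\sV'_i := \sV_i / S_i$ where $S_i := \{\gamma \in \Sym_i(E) : e \mid \gamma\}$. The rank is correct: $*_e : \sV_{i-1} \xrightarrow{\sim} \sV_i|_{S_i}$ gives $\rk S_i = \rk \sV_{i-1}$, and Pascal's identity then yields $\rk \sV'_i = \binom{r+i-1}{i} - \binom{r+i-2}{i-1} = \binom{(r-1)+i-1}{i}$, the target rank for $\Sym_i(\sV/e)$. To verify the symmetric power axiom, fix $\alpha \in \Sym_{d-i}(E\setminus\{e\})$ and set $T_\alpha := \{\alpha\gamma : \gamma \in \Sym_i(E\setminus\{e\})\}$, $S'_\alpha := S_d \cap \Sym_d(E)|_\alpha$. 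The disjoint deletion-contraction identity $(M/S)|_T = (M|_{S\cup T})/S$ reduces the axiom to the key identity
\[
(\sV_d / S_d)|_{T_\alpha} \;=\; (\sV_d|_{\Sym_d(E)|_\alpha}) / S'_\alpha,
\]
after which the isomorphism $*_\alpha$ carries the right-hand side to $\sV_i / S_i = \sV'_i$.

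The main obstacle is the key identity, which is equivalent to the rank statement $r_{\sV_d}(A \cup S_d) - r_{\sV_d}(S_d) = r_{\sV_d}(A \cup S'_\alpha) - r_{\sV_d}(S'_\alpha)$ for every $A \subset T_\alpha$; in words, the rank that $S_d \setminus S'_\alpha$ contributes above $S'_\alpha$ is unaffected by adjoining monomials from $T_\alpha$. My plan is to exploit the filtration of $\sV_d$ by degree in $e$: each layer $\Sym_d(E)|_{e^j}$ is identified via $*_{e^j}$ with $\sV_{d-j}$, and within this filtration the monomials of $S_d \setminus S'_\alpha$ lie in layers that interact with $T_\alpha$ only through $S'_\alpha$, yielding the required rank equality. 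Equivalently, one can argue at the level of circuits, showing that any circuit of $\sV_d$ contained in $S_d \cup T_\alpha$ is already contained in $\Sym_d(E)|_\alpha$; this is where the symmetric power structure is used most essentially. Once the key identity is secured, the remaining verifications are routine bookkeeping, and the two parts of the theorem combine as indicated.
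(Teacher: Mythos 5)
Your two constructions are exactly the right ones (and they coincide with the paper's: deleting $e$ corresponds to restricting $\sV_d$ to $\Sym_d(E\setminus e)$ as in \cref{lem:geometricDeletion}, and contracting $e$ corresponds to contracting $S_d$, i.e.\ the degree-$d$ shadow of specializing $x_e=\infty$ as in \cref{thm:minorClosed,cor:minorClosed}), and your reduction of the contraction case to the ``key identity'' is sound. The gap is that the key identity \emph{is} the whole difficulty, and neither of your proposed mechanisms proves it. The circuit claim is false, already for a realizable second symmetric power: take $\sV=U_{2,3}$ on $E=\set{a,b,e}$ realized by $a=(1,0)$, $b=(0,1)$, $e=(1,1)$, and let $\sV_2$ be the Macaulay symmetric square, with columns $a^2=(1,0,0)$, $ab=(0,1,0)$, $ae=(1,1,0)$, $b^2=(0,0,1)$, $be=(0,1,1)$, $e^2=(1,2,1)$. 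With $\alpha=a$ one has $T_a=\set{a^2,ab}$, $S_2=\set{ae,be,e^2}$, $S'_a=\set{ae}$, and the relation $e^2=a^2+ab+be$ shows that $\set{a^2,ab,be,e^2}$ is a circuit of $\sV_2$ contained in $S_2\cup T_a$ and meeting $T_a$, yet not contained in $\Sym_2(E)|_a=\set{a^2,ab,ae}$ (the circuit $\set{ae,be,e^2}$ already refutes the literal statement). The same dependency contradicts the filtration heuristic that monomials of $S_d\setminus S'_\alpha$ ``interact with $T_\alpha$ only through $S'_\alpha$''. Note also that any correct argument must use the full-rank hypothesis \eqref{eqn:symprodRank}: the paper points out after \cref{conj:flatcondition2} that contracting $e\cdot\Sym_{d-1}(E)$ in a mere quasi power need not produce a quasi power, whereas your key identity only references the quasi-power embeddings; your filtration/circuit plan never invokes the rank condition, so it cannot suffice. (Submodularity gives one inequality of your rank statement for free; the reverse inequality, equivalently that $A\cup S'_\alpha$ and $S_d$ form a modular pair for every $A\subset T_\alpha$, is the substance.)

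For comparison, the paper does not prove this rank identity combinatorially at all: it passes through the geometric characterization that $(\sV_1,\dots,\sV_d)$ is a symmetric power if and only if $V(\sV_d)=\trop(\sV)$ (\cref{prop:symprodCharacterisations}, \cref{cor:bfCharacterisation}, resting on the Hilbert-function bounds of \cite{DR19} and the connectivity-through-codimension-one results of \cref{subsec:connectivity}), and then checks that the candidate minor power has variety $\trop(\sV')$ by projecting or slicing the variety (\cref{lem:geometricDeletion} for deletion, the specialization argument of \cref{thm:minorClosed} for contraction). So to close your gap you should either invoke that characterization---at which point your rank bookkeeping becomes unnecessary---or supply a genuinely new combinatorial proof of the modular-pair identity, which would be of independent interest given \cref{conj:flatcondition,conj:flatcondition2}. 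Two smaller repairs: in the deletion case you never verify the rank condition \eqref{eqn:symprodRank} (easily fixed: for a basis $B'$ of $\sV\sm e$, extend to a basis $B$ of $\sV$, note $\Sym_d(B')\subset\Sym_d(B)$ is independent in $\sV_d$ hence in the restriction, and \cref{prop:masonFullRank} gives the matching upper bound); and your argument is phrased for underlying matroids, whereas \cref{def:symprod} requires isomorphisms of valuated matroids, so the valuations need a word, e.g.\ via the well-definedness of valuated minors as in \cite{DW92}. Your derivation of the first sentence of the theorem from \cref{thm:introThm1} is legitimate and non-circular, but it inherits the contraction gap.
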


The geometric techniques used to prove \coloref{thm:introThm2} extend to other classes of matroids. We say that the matroid $\sV$ is \textbf{tropically set-realizable} if there is a tropical ideal $I$ for which the support of $V(I)$ is equal to that of $\trop(\sV)$, but the weights are not necessarily equal to $1$. Tropically set-realizable matroids were originally discussed in \cite{Yu17}, and here the the proof of \coloref{thm:introThm2} immediately implies that this set of matroids is minor closed. 

We also consider the class of tropically algebraic matroids. Given a (tropical) ideal $I$ in $\T[x_1,\dots,x_n]$, we define the independence complex of $I$ to be the collection of subsets of $[n]$ given by

$$\mathcal{I}(I):= \set{S\subset [n]: I\cap \T[x_i: i\in S]= \set{\infty}}.$$
The independence complex of $I$ is always an abstract simplicial complex, but it is not always a matroid, even if we restrict to tropical ideals $I$. We say that a matroid $M$ is \textbf{tropically algebraic} if there is a tropical ideal $I$ for which $\mathcal{I}(I)$ is the collection of independent sets of $M$. In \coloref{sec:LinTropIdeals} we will prove the set of tropically algebraic matroids is minor closed as well. 

Underlying these aforementioned proofs are several technical results that are quite useful, and in particular we'd highlight a special case of \coloref{prop:connectedCodim1}. If $\Sigma$ is a polyhedral complex then we can define its \textbf{facet-ridge} graph $G(\Sigma)$. This is the graph whose vertices are given by the facets (top-dimensional cells) of $\Sigma$ and where two vertices share an edge if there is a ridge (codimension-1 face) between the associated facets in $\Sigma$. The polyhedral complex $\Sigma$ is said to be connected through codimension one if $G(\Sigma)$ is a connected graph.

\begin{corollary}\label{thm:introThm3}
Tropical linear spaces are connected through codimension 1.
\end{corollary}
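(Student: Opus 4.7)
The plan is to reduce the statement to the classical fact that Bergman fans of loopless matroids are connected through codimension one, via a local-to-global argument. Set $X := \trop(\sV)$, a pure balanced polyhedral complex in $\T^n$.

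First, I would establish that connectivity through codimension one is a local condition on $X$: if for every $x \in X$ the star $\Star_x(X)$ is connected through codimension one as a fan, then so is $X$. To prove this, fix two facets $F, F'$ of $X$ and choose interior points $x_0 \in \relint(F)$, $x_1 \in \relint(F')$. Since the support $|X|$ is path-connected, fix a continuous path $\gamma \colon [0,1] \to |X|$ from $x_0$ to $x_1$; by compactness it meets only finitely many cells. Whenever $\gamma$ enters a cell $C$ of codimension at least two, replace the local piece of $\gamma$ near a point $y \in \relint(C)$ by a detour that visits only facets and ridges of $X$ near $y$, using the hypothesis on $\Star_y(X)$. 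Concatenating these detours yields a walk in the facet-ridge graph $G(X)$ from $F$ to $F'$.

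I would then invoke the standard identification $\Star_x(X) = B(\inn_x \sV)$ up to the common lineality space, where $\inn_x \sV$ is the loopless initial matroid at $x$ and $B(M)$ denotes the Bergman fan of a matroid $M$. The problem thereby reduces to: for every loopless matroid $M$ of rank $r$, $B(M)$ is connected through codimension one. Here $B(M)$ admits a fan structure whose maximal cones are indexed by maximal chains of proper nonempty flats $\emptyset \subsetneq F_1 \subsetneq \cdots \subsetneq F_{r-1} \subsetneq [n]$, and two maximal cones share a codimension-one face exactly when their chains differ in a single flat. Connectivity of the resulting flag graph on the geometric lattice of $M$ is classical and can be derived directly from the basis-exchange axiom, by producing for any two maximal chains a sequence of single-flat swaps interpolating between them.

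The main obstacle will be formalizing the path-modification argument: one must ensure that the local detours across codimension-at-least-two cells can be concatenated into a single well-defined walk in $G(X)$, which is routine but requires careful bookkeeping. In the present paper I expect this step to be subsumed by the more general \coloref{prop:connectedCodim1}, which likely bypasses the topological reduction by working directly with the vector-elimination structure of $\sV$ (applying elimination between circuits of appropriate initial matroids to certify the required ridge-adjacencies algebraically).
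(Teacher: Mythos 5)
Your overall route coincides with the paper's: both reduce the statement to the facts that the star of every point of a tropical linear space is (up to linear isomorphism) the Bergman fan of a matroid, and that Bergman fans are connected through codimension one (which the paper, like you, quotes as classical rather than reproving), and both then pass from local to global connectivity through codimension one. The difference is only in how that local-to-global step is established. You follow a continuous path between interior points of two facets and detour around cells of codimension at least two, which is workable but carries exactly the concatenation bookkeeping you flag as the main obstacle. The paper's \coloref{prop:connectedCodim1} avoids paths entirely: assuming the facet-ridge graph $G(\Sigma)$ is disconnected, the supports of its components are closed sets covering $|\Sigma|$, so connectedness of $|\Sigma|$ forces two components to share a point $w$; since $G(\Star_w(\Sigma))$ is connected and is a subgraph of $G(\Sigma)$ containing facets from both components, one gets an immediate contradiction. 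In particular, your closing guess about the paper's method is off: \coloref{prop:connectedCodim1} does not certify ridge-adjacencies via vector elimination; it is a purely polyhedral statement about connected pure complexes that are locally connected through codimension one, applied through \coloref{cor:connectedCodim1} to tropical manifolds, of which tropical linear spaces are an instance. Your version proves the same lemma but at greater cost, and both arguments rely on connectedness of the tropical linear space itself, a hypothesis you should state explicitly rather than leave implicit in ``path-connected.''
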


\coloref{thm:introThm3} is vital to the proof of \coloref{thm:introThm1}, and has implications well beyond the scope of this paper.

\section*{Acknowledgements}
I'd like to give some special thanks to Felipe Rinc\'on and Alex Fink for their valuable insight and guidance with this paper, as well as to Josephine Yu, Diane Maclagan, and Zach Walsh for helpful conversations that inspired multiple improvements to the results in this paper.




\section{Matroid Products}\label{sec:MatProds}

In this section we will revisit some ideas of Mason \cite{MGlueing} and Las Vergnas \cite{LV81} particularly with respect to tensor and symmetric powers of matroids, working with matroids rather than valuated matroids.
   
Work with matroid products in the category of matroids with strong maps was explored by Crapo and Rota in \cite{CR70}, and it was known at the time of these works that matroid products did not exist in this category. However, this is not the notion of matroid product that we are discussing in this paper. Instead, we interest ourselves in matroid products in a weaker sense, as defined by Las Vergnas for tensor products, and by Mason for symmetric powers.
\subsection{Tensor Products}\label{subsec:tensorProducts}
    Given matroids $M$ and $N$, we aim to construct a new matroid, called a tensor product of $M$ and $N$, which will be a matroid with ground set equal to the Cartesian product $E(M)\times E(N)$ with rank $\rk(M)\rk(N)$ and inheriting some structure from $M$ and $N$. While this operation is always possible for matroids represented over a common field $k$, it is not necessarily unique, nor is it  possible for matroids in general as shown by LasVergnas in \cite{LV81}*{Proposition 2.1}. 
    
    Several results presented in this section have been known since LasVergnas's original paper, however, we revisit this work and introduce a new proof of \cite{LV81}*{Proposition 2.1}. Furthermore, our proof generalizes to an infinite class of non-algebraic matroids with no common non-realizable minors. This family of matroids is exactly the class of matroids that were initially studied by Lindstr\"om in \cite{Lind88}. We substantiate the claim that none of these matroids has a non-trivial matroid tensor product.
    
    \begin{definition}\label{def:tensorproduct}
    Let $M$ and $N$ be matroids on ground sets $E(M)$ and $E(N)$ respectively. A matroid $P$ on ground set $E(M)\times E(N)$ is called a \textbf{quasi product} of $M$ and $N$ if for every non-loop $e\in E(M)$ we have
    \begin{equation} \label{eqn:tensorRight}
        N\cong P|_{\set{(e,x)\mid x\in E(N)}}
    \end{equation}
    and for every non-loop $f\in E(N)$ we have
    \begin{equation}\label{eqn:tensorLeft}
        M\cong P|_{\set{(x,f)\mid x\in E(M)}},
    \end{equation}
    where these isomorphisms are explicitly given by the maps $x\mapsto (e,x)$ and $x\mapsto (x,f)$ respectively. If $e$ is a loop in $E(M)$ and $f$ a loop in $E(N)$ then we have
    \begin{equation}\label{eqn:tensorRight0}
        \rk(P|_{\set{(e,x)\mid x\in E(N)}})=0
    \end{equation}
    or
    \begin{equation}\label{eqn:tensorLeft0}
    \rk(P|_{\set{(x,f)\mid x\in E(M)}})=0
    \end{equation}
    respectively.

    We will refer to the set $\set{(e,x)\mid x\in E(N)}$ as the \textbf{row} associated to $e\in E(M)$ and we will refer to the set $\set{(x,f)\mid x\in E(M)}$ as the \textbf{column} associated to an element $f\in E(N)$ .
    \end{definition}
    
    \colorefiso{eqn:tensorRight} and \colorefiso{eqn:tensorLeft} imply that, given bases $B_1$ of $M$ and $B_2$ of $N$, that $B_1\times B_2$ is a spanning set of any quasi product of $M$ and $N$. Therefore the rank of a quasi product is bounded above by $\rk(M)\rk(N)$, and this bound can be obtained whenever $M$ and $N$ are $k$-realizable matroids. We call that a quasi product of $M$ and $N$ with rank $\rk(M)\rk(N)$ a \textbf{tensor product} of $M$ and $N$. Moreover, if $M$ and $N$ are the same matroid, then we refer to any of their tensor products as a \textbf{second tensor power}.

    The motivating example for the definition of matroid tensor products is, as is often the case, linear spaces. Given two matroids $M_1$ and $M_2$ that are represented as linear subspaces of $L(M_1)\subset k^{d_1}$ and $L(M_2)\subset k^{d_2}$, the tensor product $L(M_1)\otimes L(M_2)$ is a linear subspace of $k^{d_1d_2}$ whose corresponding matroid is a tensor product of $M_1$ and $M_2$. Given matrix representations of $L(M_1)$ and $L(M_2)$ their Kronecker product yields a matrix representation of $L(M_1)\otimes L(M_2)$.
    
    The following lemma is a result of LasVergnas, and is useful for understanding the structure of tensor products.
    \begin{lemma}\label{lem:tensorCharacterizations}\cite{LV81}*{Lemma 2.2}
    Let $M$ and $N$ be matroids and let $P$ be a quasi product of $M$ and $N$. The following are equivalent.
    \begin{enumerate}
        \item $\rk(P)=\rk(M)\rk(N),$
        \item for every basis $B_M$ of $M$ and every basis $B_N$ of $N$, $B_M\times B_N$ is a basis of $P$,
        \item for every pair of flats $F'\subset F$ of $M$ and $G'\subset G$ of $N$ 
        $$F'\times G \cup F\times G'$$
        is a flat of $P$,
        \item for every pair of sets $X'\subset X\subset E_M$ and $Y'\subset Y\subset E_N$
        $$\rk(X'\times Y\cup X\times Y')= \rk(X'\times Y)+\rk(X\times Y)- \rk(X'\times Y').$$
    \end{enumerate}
    \end{lemma}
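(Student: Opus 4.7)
The plan is to route every equivalence through the auxiliary rank factorisation
\[
(\star)\qquad \rk_P(X\times Y)=\rk_M(X)\rk_N(Y)\qquad\text{for all }X\subseteq E(M),\ Y\subseteq E(N).
\]
The starting point is the observation already recorded after \coloref{def:tensorproduct}: for any bases $B_M,B_N$, iterating the row and column isomorphisms of the quasi product definition shows that $B_M\times B_N$ is a spanning set of $P$, since each row $\{e\}\times E(N)$ is spanned by $\{e\}\times B_N$ via the row isomorphism with $N$, and each column is then spanned by $B_M\times\{f\}$ via the column isomorphism with $M$. Consequently $B_M\times B_N$ is a basis of $P$ iff it is independent iff $\rk(P)=\rk(M)\rk(N)$, which yields $(1)\Leftrightarrow(2)$ at once.

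The implication $(2)\Rightarrow(\star)$ is immediate: extending bases $B_X$ of $M|_X$ and $B_Y$ of $N|_Y$ to bases of $M$ and $N$ and invoking $(2)$ shows $B_X\times B_Y$ is independent in $P$, and the same row/column spanning argument applied inside $X\times Y$ shows it spans $X\times Y$. From $(\star)$ both $(2)\Rightarrow(4)$ and $(2)\Rightarrow(3)$ follow by direct calculation: for $(4)$, pick $B_{X'}\subseteq B_X$ and $B_{Y'}\subseteq B_Y$ and check that $(B_{X'}\times B_Y)\cup(B_X\times B_{Y'})$ is independent as a subset of $B_X\times B_Y$ and spans the L-shape $X'\times Y\cup X\times Y'$, so cardinality inclusion--exclusion yields the modular identity; for $(3)$, compute $\rk_P(L)$ with $L=F'\times G\cup F\times G'$ via $(4)$ and verify that any $(x,y)\notin L$ forces a strict rank increase by combining the flat conditions on $F,G$ in $M,N$ with the row and column closure structure of $P$.

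For the converse direction I would argue by induction on $|X|+|Y|$ that each of $(3)$ and $(4)$ implies $(\star)$, whence $(1)$ and $(2)$ follow from $(\star)$ at $X=E(M)$, $Y=E(N)$ together with the spanning remark. The inductive step splits according to whether $M|_X$ and $N|_Y$ are independent: if some $x_0\in X$ is dependent on $X\setminus\{x_0\}$ in $M$, then for every non-loop $y\in Y$ the column isomorphism propagates this dependence to $P$, so $(x_0,y)$ is spanned by $(X\setminus\{x_0\})\times Y$ in $P$ and the induction closes; the symmetric case handles a dependent $y_0\in Y$. The hard case, and the main obstacle I anticipate, is when both $M|_X$ and $N|_Y$ are independent: the modular identity $(4)$ applied to $X\times Y\setminus\{(x_0,y_0)\}$ only pins $\rk_P(X\times Y)$ to within one of $|X||Y|$, and ruling out the off-by-one dependency requires either an additional invocation of $(4)$ inside a larger rectangle, or an appeal to $(3)$ applied to a flat of the form $E(M)\times G$, so that the independence of $X\times Y$ can be read off globally. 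A careful treatment of loops and parallel elements in $M$ and $N$ will also be needed to handle the rank-zero rows and columns permitted by the quasi product definition.
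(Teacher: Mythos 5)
The paper does not actually prove this lemma: it is cited verbatim from Las Vergnas \cite{LV81}*{Lemma 2.2}, so there is no ``paper's own proof'' to compare against. (Note also that the paper's statement of item (4) has a typo --- the right-hand side should read $\rk(X'\times Y)+\rk(X\times Y')-\rk(X'\times Y')$, the modular identity for the union $X'\times Y\cup X\times Y'$ with intersection $X'\times Y'$; you correctly read it this way.) What I can assess is the correctness of your attempt, and you have honestly flagged the place where it breaks.

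The forward chain is sound. The equivalence $(1)\Leftrightarrow(2)$ via the spanning remark is fine; the factorisation $(\star)$ follows from $(2)$ because $P|_{X\times Y}$ is itself a quasi product of $M|_X$ and $N|_Y$; and $(\star)$ gives both $(4)$ by inclusion--exclusion on bases and $(3)$ after checking closure (the closure check needs a little more care than you sketch, but it is doable by considering the three cases $x\notin F$, $x\in F\setminus F'$, $x\in F'$ separately and using the row/column isomorphisms).

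The real gap is exactly where you say it is, and you should be aware that neither of your two proposed fixes closes it. For $(4)\Rightarrow(\star)$: the modular identity applied to $X'=I_M\setminus\{a\}$, $Y'=I_N\setminus\{b\}$ pins $\rk_P\bigl((I_M\times I_N)\setminus\{(a,b)\}\bigr)=|I_M||I_N|-1$ for \emph{every} choice of $(a,b)$, but this is perfectly consistent with $I_M\times I_N$ being a circuit of rank $|I_M||I_N|-1$; applying $(4)$ inside a larger rectangle just reproduces the same off-by-one ambiguity one level up, so this does not resolve it. For $(3)\Rightarrow(2)$: flats of the form $E(M)\times G$ are not enough, because they do not separate a single pair $(b,b')$ from the rest of $B_M\times B_N$. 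The flat that actually works is the L-shape
\[
\ol{B_M\setminus b}\times E(N)\;\cup\; E(M)\times \ol{B_N\setminus b'},
\]
which by $(3)$ is a flat of $P$, contains $(B_M\times B_N)\setminus\{(b,b')\}$, and omits $(b,b')$; this shows every element of $B_M\times B_N$ is a coloop of $P|_{B_M\times B_N}$, hence $B_M\times B_N$ is independent and $(2)$ follows. You would then still need to establish $(4)\Rightarrow(3)$ (or $(4)\Rightarrow(1)$) by some other route --- for instance by first showing $(4)\Rightarrow(3)$ directly, since your $(\star)$-based argument for $(3)$ only used rank computations that $(4)$ already furnishes --- to complete the cycle of equivalences.
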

    It is important to keep in mind that matroid tensor products may not be unique. In fact, even for a fixed field $k$ and two $k$-representable matroids $M_1$ and $M_2$, the tensor product obtained from linear subspace representations $L(M_1)$ and $L(M_2)$ may depend on the choice of representatives $L(M_1)$ and $L(M_2)$. 

    However, existence and uniqueness are guaranteed in some cases.

    \begin{lemma}\label{lem:tensorSums}
     Let $A,B,$ and $M$ be any matroids and let $N=A\oplus B$. If $P$ is a tensor product of $M$ and $N$ then $P= P_A\oplus P_B$ where $P_A$ is a tensor product of $M$ and $A$ and $P_B$ is a tensor product of $M$ and $B$.
    \end{lemma}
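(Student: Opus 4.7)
The plan is to set $P_A := P|_{E(M)\times E(A)}$ and $P_B := P|_{E(M)\times E(B)}$, check that each is a quasi product with $M$, and then use a rank squeeze between the quasi-product upper bound and a submodularity lower bound to conclude simultaneously that $P_A, P_B$ are tensor products and that $P = P_A\oplus P_B$.

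First I would verify that $P_A$ is a quasi product of $M$ and $A$ (with $P_B$ symmetric). For a non-loop $e\in E(M)$, the isomorphism $N\cong P|_{\{(e,x):x\in E(N)\}}$ given by $x\mapsto(e,x)$ restricts along $E(A)\subseteq E(N)$, whose image lies inside $E(P_A)$, to produce $A = N|_{E(A)}\cong P_A|_{\{(e,x):x\in E(A)\}}$. For a non-loop $f\in E(A)$, the element $f$ is a non-loop in $N=A\oplus B$ as well, so the column of $f$ in $P_A$ coincides with the column of $f$ in $P$ and inherits its isomorphism with $M$ from the quasi-product structure of $P$. The two loop cases follow from the same restriction argument. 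In particular, the standard bound for quasi products gives $\rk(P_A)\leq \rk(M)\rk(A)$ and $\rk(P_B)\leq \rk(M)\rk(B)$.

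Next I would squeeze the ranks. Since $E(P)=E(P_A)\sqcup E(P_B)$, submodularity of the rank function of $P$ yields
\[
\rk(P_A)+\rk(P_B)\;\geq\;\rk\bigl(E(P_A)\cup E(P_B)\bigr)+\rk(\emptyset)\;=\;\rk(P)\;=\;\rk(M)\rk(N)\;=\;\rk(M)\bigl(\rk(A)+\rk(B)\bigr),
\]
where the penultimate equality uses that $P$ is a tensor product of $M$ and $N$ and the last uses $N=A\oplus B$. Combined with the upper bounds from the previous step, every inequality must collapse to equality, giving $\rk(P_A)=\rk(M)\rk(A)$, $\rk(P_B)=\rk(M)\rk(B)$, and $\rk(P_A)+\rk(P_B)=\rk(P)$. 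The first two equalities say that $P_A$ and $P_B$ are tensor products of $M$ with $A$ and $B$ respectively. The third, together with the disjoint decomposition $E(P_A)\sqcup E(P_B)=E(P)$, is precisely the statement that $E(P_A)$ is a separator of $P$, so $P=P_A\oplus P_B$. I do not anticipate any real obstacle: once the restrictions are identified as quasi products, the whole argument reduces to this elementary rank squeeze.
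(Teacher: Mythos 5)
Your proof is correct and takes essentially the same route as the paper's: restrict $P$ to $E(M)\times E(A)$ and $E(M)\times E(B)$, check that these restrictions are quasi products, and squeeze their ranks against $\rk(P)=\rk(M)\rk(A)+\rk(M)\rk(B)$ using the quasi-product upper bound. You in fact supply the details the paper leaves implicit, namely the explicit quasi-product verification and the separator argument yielding $P=P_A\oplus P_B$.
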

    \begin{proof}
        Consider the matroids 
        $$P_A = P|_{E(M)\times E(A)},$$
        and
        $$P_B = P|_{E(M)\times E(A)}.$$
        It is straightforward to check that $P_A$ and $P_B$ are quasi products of $M$ with $A$ and $B$ respectively. Since
        $$\rk(P)=\rank(M)\rank(A)+\rank(M)\rank(B))$$
        we may conclude that $\rk(P_A)=\rk(M)\rk(A)$ and $\rk(P_B)= \rk(M)\rk(B)$. Therefore $P_A$ and $P_B$ are tensor products. 
    \end{proof}

    \begin{lemma}\label{lem:tensorSimplification}
    Let $M$ and $N$ be any matroids. Then any quasi product of $M$ with $N$ depends only the simplification of $M$. 
    \end{lemma}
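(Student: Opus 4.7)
The plan is to show that any quasi product $P$ of $M$ with $N$ is entirely determined by the data of a quasi product of $\si(M)$ with $N$ together with the loop and parallel-class structure of $M$. Concretely, I would establish two facts: (a) for every loop $e$ of $M$, the entire row $\{(e, x) : x \in E(N)\}$ consists of loops in $P$; and (b) for every pair $e_1, e_2$ of parallel non-loops in $M$, the elements $(e_1, f)$ and $(e_2, f)$ are parallel, or both loops, in $P$ for every $f \in E(N)$. Together these say that $P$ is built from its restriction to $E(\si(M)) \times E(N)$ by appending rows of loops (one per $M$-loop) and rows of parallel copies (one per non-representative element of each parallel class).

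For (a), fix a loop $e \in E(M)$ and an arbitrary $f \in E(N)$, and split on whether $f$ is a loop of $N$. If $f$ is a non-loop, then \eqref{eqn:tensorLeft} supplies an isomorphism $M \to P|_{\{(x,f) : x \in E(M)\}}$ given explicitly by $x \mapsto (x, f)$; since this map sends $e$ to $(e, f)$ and matroid isomorphisms preserve loops, $(e, f)$ is a loop in the column restriction, hence a loop in $P$. If $f$ is a loop of $N$, then by \eqref{eqn:tensorLeft0} the column restriction has rank $0$, so $(e,f)$ is again a loop of $P$. Argument (b) is parallel in form: the explicit column map of \eqref{eqn:tensorLeft} carries the parallel pair $\{e_1, e_2\}$ to $\{(e_1, f), (e_2, f)\}$, preserving parallelism in $P|_{\text{col}(f)}$ and hence in $P$, while if $f$ is a loop both elements collapse to loops as in (a).

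Having shown (a) and (b), pick a set $S \subset E(M)$ of parallel-class representatives so that the matroid induced on $S$ is $\si(M)$. The restriction $P' := P|_{S \times E(N)}$ inherits the row and column isomorphisms of \eqref{eqn:tensorRight} and \eqref{eqn:tensorLeft} from $P$, and is therefore a quasi product of $\si(M)$ with $N$; conversely, the full matroid $P$ is reconstructed from $P'$ by adjoining a loop row for each $M$-loop and a parallel row-copy for each non-representative in a parallel class of $M$. Since this reconstruction depends only on the loop/parallel data of $M$, any two matroids with the same simplification yield identical quasi products with $N$. I do not anticipate any real obstacle; the only delicate point is that the isomorphisms of \eqref{eqn:tensorRight} and \eqref{eqn:tensorLeft} are declared as specific coordinate maps rather than abstract isomorphisms, and it is exactly this rigidity that lets loop and parallel information transfer from $M$ to $P$ on the nose.
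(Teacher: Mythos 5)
Your proof is correct and mirrors the paper's: both establish that loops of $M$ produce rows of loops in $P$ and that parallel elements of $M$ produce parallel pairs within each column, and then conclude that $P$ is determined by $\si(M)$ together with the loop/parallel data. The only cosmetic variation is that the paper obtains the loop-row fact directly from the rank-zero condition on rows indexed by loops of $M$, whereas you derive it column-by-column from the column isomorphism and its rank-zero counterpart; both are immediate, and the reconstruction step you spell out is implicit in the paper's phrasing.
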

    \begin{proof}
        Let $P$ be a quasi product of $M$ with $N$. Suppose $M$ has a loop $e$. Then for any $x$ in $E(N)$, we have that $(e,x)$ is a loop of $P$. This is an immediate consequence of \coloref{eqn:tensorRight0}. In particular, adding or removing a loop $e$ from $M$ affects the quasi product $P$ by adding or removing the set of loops $\set{e}\times E(N)$ from $P$. 

        Now suppose that $S$ is a parallelism class of $M$. Then \colorefiso{eqn:tensorRight} implies that $S\times \set{x}$ is contained in a parallelism class $S'$ of $P$. In particular, replacing the parallelism class $S$ with a representative $e$ affects $P$ by replacing the subset $S\times \set{x}$ of the parallelism class $S'$ with $(e,x)$. 
    \end{proof}
\begin{proposition}\label{prop:uniqueTensors}
Let $M$ be any matroid. Then there is a unique tensor product between $M$ and $U_{n,n}$.
\end{proposition}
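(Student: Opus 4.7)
The plan is to reduce the whole problem to the elementary case $n=1$ via \coloref{lem:tensorSums}, exploiting the fact that $U_{n,n}$ decomposes as a direct sum of $n$ copies of $U_{1,1}$.

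First I would handle the base case $n=1$. Writing the single element of $U_{1,1}$ as $f$, this element is a non-loop, so for any quasi product $P$ of $M$ with $U_{1,1}$ the column $\set{(x,f): x\in E(M)}$ is the entire ground set of $P$, and \colorefiso{eqn:tensorLeft} forces the map $x\mapsto(x,f)$ to be an isomorphism $M\cong P$. Hence $P$ is uniquely determined, and since $\rk(P)=\rk(M)=\rk(M)\rk(U_{1,1})$ it is automatically a tensor product.

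For existence in general, I would verify that $P:=M^{\oplus n}$, viewed as a matroid on $E(M)\times[n]$ with the $i\th$ summand having ground set $E(M)\times\set{i}$, is a tensor product of $M$ with $U_{n,n}$. Each column $\set{(x,i):x\in E(M)}$ is precisely the $i\th$ summand and so is isomorphic to $M$; each row $\set{(e,i):i\in[n]}$ associated to a non-loop $e\in E(M)$ consists of one non-loop drawn from each of the $n$ components, hence is independent and isomorphic to $U_{n,n}$; and $\rk(P)=n\cdot\rk(M)=\rk(M)\cdot\rk(U_{n,n})$. For the finitely many loops $e\in E(M)$, the corresponding row is a row of loops in $P$, as required by \coloref{eqn:tensorRight0}.

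Uniqueness then follows by induction on $n$. Writing $U_{n,n}=U_{n-1,n-1}\oplus U_{1,1}$, \coloref{lem:tensorSums} asserts that any tensor product $P$ of $M$ with $U_{n,n}$ decomposes as $P=P'\oplus P''$, where $P'$ is a tensor product of $M$ with $U_{n-1,n-1}$ and $P''$ is a tensor product of $M$ with $U_{1,1}$. The base case identifies $P''$ with $M$, the inductive hypothesis identifies $P'$ with $M^{\oplus(n-1)}$, and this pins $P$ down as $M^{\oplus n}$. The argument is short once \coloref{lem:tensorSums} is available; the one thing to check carefully is that the splitting $P=P'\oplus P''$ is compatible with the row/column structure on $E(M)\times E(U_{n,n})$, but this is automatic because the decomposition is performed along the partition $E(U_{n,n})=E(U_{n-1,n-1})\sqcup\set{f}$, so $P'$ and $P''$ live on disjoint blocks of columns and no obstruction can arise.
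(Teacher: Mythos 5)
Your proof is correct and follows essentially the same route as the paper: both arguments use \coloref{lem:tensorSums} to split any tensor product with $U_{n,n}$ into tensor products with $U_{1,1}$, identify each of these with $M$, and confirm the rank $n\rk(M)=\rk(M)\rk(U_{n,n})$ for existence. Your version is merely a little more explicit (induction on $n$ and a direct check that $M^{\oplus n}$ satisfies the quasi-product conditions), but it is not a different approach.
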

    \begin{proof}
            If $P$ is a tensor product of $M$ and $U_{n,n}$, then \coloref{lem:tensorSums} implies that $P = \oplus_{i=1}^n P_i$ where $P_i$ is a tensor product of $M$ and $U_{1,1}$. In particular $P_i\cong M$ and so $P=\oplus_{i=1}^n M$. We see that $\rk(P)=n\rk(M)=\rk(U_{n,n})\rk(M)$ and so $P$ is tensor product. This proves existence and uniqueness.
    \end{proof}

    \begin{corollary}\label{cor:uniqueSimplification}
        Let $M$ be any matroid and let $N$ be a matroid whose simplification is $U_{n,n}$. Then a tensor product between $M$ and $N$ exists and is unique.
    \end{corollary}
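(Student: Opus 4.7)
The plan is to derive this from Proposition \coloref{prop:uniqueTensors} by applying the simplification reduction on the second factor. First I would observe that the axioms in Definition \coloref{def:tensorproduct} are symmetric in $M$ and $N$, since rows and columns play interchangeable roles; consequently the conclusion of Lemma \coloref{lem:tensorSimplification} applies with the factors swapped, so any quasi product of $M$ with $N$ is determined, up to inserting columns of loops and duplicating columns across parallel classes, by a quasi product of $M$ with $\si(N)=U_{n,n}$.

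For existence, I would construct $P$ explicitly. Let $P'=\bigoplus_{i=1}^n M$ be the unique tensor product of $M$ with $U_{n,n}$ provided by Proposition \coloref{prop:uniqueTensors}, and index its $n$ columns by a transversal $f_1,\dots,f_n$ of the non-loop parallel classes of $N$. Then define $P$ on $E(M)\times E(N)$ by duplicating, for each non-loop $f\in E(N)$, the column of $P'$ over $f_i$ (where $f$ is parallel to $f_i$) as a parallel copy in $P$, and by declaring the column over each loop of $N$ to consist of loops. The column conditions \coloref{eqn:tensorLeft} and \coloref{eqn:tensorLeft0} hold by construction, and the row conditions \coloref{eqn:tensorRight} and \coloref{eqn:tensorRight0} are inherited from $P'$, since the enlargement of each row of $P'$ to a row of $P$ adds precisely the loops and parallels of $N$. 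Because adding loops and parallel elements preserves rank, $\rk(P)=\rk(P')=n\,\rk(M)=\rk(M)\rk(N)$, so $P$ is a tensor product.

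For uniqueness, I would take an arbitrary tensor product $Q$ of $M$ and $N$, restrict $Q$ to $E(M)\times\{f_1,\dots,f_n\}$, and check that this restriction is a quasi product of $M$ with $U_{n,n}$. Its rank is at least the rank of $Q$ minus the contributions of the deleted loop and parallel columns, which is $n\,\rk(M)$, and at most $n\,\rk(M)$ by the bound discussed after Definition \coloref{def:tensorproduct}; hence it is a tensor product. Proposition \coloref{prop:uniqueTensors} then identifies this restriction with $\bigoplus_{i=1}^n M$, and the symmetric simplification lemma forces the rest of $Q$ to agree with $P$ on the remaining (loop and parallel) columns.

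The main obstacle is justifying the symmetric use of Lemma \coloref{lem:tensorSimplification}, which I would handle by a short remark (or a companion lemma) observing that its proof uses only \coloref{eqn:tensorRight} and \coloref{eqn:tensorRight0}, whose column-side analogues \coloref{eqn:tensorLeft} and \coloref{eqn:tensorLeft0} admit identical arguments. Once this symmetry is recorded, the corollary is a direct stitching of Proposition \coloref{prop:uniqueTensors} with the simplification lemma.
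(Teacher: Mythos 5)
Your proposal is correct and takes essentially the same route as the paper, whose proof simply combines \coloref{lem:tensorSums}, \coloref{lem:tensorSimplification}, and \coloref{prop:uniqueTensors}: reduce $N$ to its simplification $U_{n,n}$ and invoke the unique tensor product $\bigoplus_{i=1}^n M$, with the loop and parallel columns then forced. Your explicit duplication/restriction construction and the remark that \coloref{lem:tensorSimplification} must be applied to the second factor (justified by the row--column symmetry of \coloref{def:tensorproduct}) merely spell out details the paper's one-line proof leaves implicit.
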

    \begin{proof}
    This follows from \coloref{lem:tensorSums}, \coloref{lem:tensorSimplification}, and \coloref{prop:uniqueTensors}
    \end{proof}
 
    Outside of the realizable case, little is known about the existence of matroid tensor products beyond \coloref{cor:uniqueSimplification}. It is known, however, that not all matroids have tensor products between them. 
    
    In his first work on matroid tensor products, Las Vergnas proved that the V\'amos matroid $V_8$ has no second tensor power. However, in \cite{LV81} this proof is simplified into two smaller results. First, \cite{LV81}*{Proposition 2.1} proves that no tensor product between $U_{2,3}$ and $V_8$ exists. Then \cite{LV81}*{Remark 2.3} provides that if a tensor power $P$ existed, it would have a minor isomorphic to a tensor product of $U_{2,3}$ and $V_8$, a contradiction. We conclude that no second tensor power of $V_8$ exists. In particular, the V\'amos matroid is a forbidden minor of the class of matroids with second tensor powers. We make this connection between tensor products and their minors explicit.

    We denote the collection of independent sets of a matroid $N$ by $\mathbf{\sI(N)}$.

    \begin{proposition}\label{prop:tensorMinorClosed}
        Let $M$ and $N$ be any matroids and let $P$ be a tensor product of $M$ and $N$. Let $S$ and $T$ be disjoint subsets of $E(M)$ and let $M'= M\sm S\contr T$ be a minor of $M$. Then the minor $P':=P\sm (S\times E(N))\contr (T\times E(N))$ is a tensor product of $M'$ with $N$.
    \end{proposition}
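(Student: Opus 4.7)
The plan is to factor the minor operation as a row deletion followed by a row contraction, writing $P' = (P \sm (S \times E(N))) \contr (T \times E(N))$, and to prove separately that each step preserves tensor products. That is, I will establish (i) that $P \sm (S \times E(N))$ is a tensor product of $M \sm S$ with $N$, and (ii) that $P \contr (T \times E(N))$ is a tensor product of $M \contr T$ with $N$; applying (i) first and then (ii) (with $M$ replaced by $M \sm S$) yields the proposition.

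For (i), the quasi product conditions transfer for free. Each row $\{e\} \times E(N)$ with $e \in E(M) \sm S$ is untouched by the deletion, so the row isomorphism to $N$ (or the rank-zero condition if $e$ is a loop) carries over verbatim. Each column $(E(M) \sm S) \times \{f\}$ is a restriction of the column $P|_{E(M) \times \{f\}} \cong M$, hence isomorphic to $M \sm S$ (and has rank zero when $f$ is a loop of $N$). For the rank, I extend a basis $B_{M \sm S}$ of $M \sm S$ to a basis $B_M$ of $M$; by characterization~(2) of \Cref{lem:tensorCharacterizations}, $B_M \times B_N$ is a basis of $P$, so the subset $B_{M \sm S} \times B_N$ is independent in the deletion. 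It also spans $(E(M) \sm S) \times E(N)$, since for each $h \in B_N$ the column isomorphism $P|_{(E(M) \sm S) \times \{h\}} \cong M \sm S$ puts $\{e\} \times \{h\}$ in the closure of $B_{M \sm S} \times \{h\}$, and for each non-loop $e$ the row isomorphism puts $\{e\} \times E(N)$ in the closure of $\{e\} \times B_N$.

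For (ii), iterating (i) and its column analogue (which follows by the symmetry of \Cref{def:tensorproduct} in $M$ and $N$) shows that every rectangle satisfies the product rank formula
\begin{equation*}
\rk_P(X \times Y) = \rk_M(X) \rk_N(Y), \qquad X \subset E(M), \; Y \subset E(N).
\end{equation*}
Combining this with characterization~(4) of \Cref{lem:tensorCharacterizations}, applied with quadruple $(T, T \cup X, Y, E(N))$, and the standard contraction rank identity $\rk_{Q \contr B}(Z) = \rk_Q(Z \cup B) - \rk_Q(B)$, I compute
\begin{equation*}
\rk_{P \contr (T \times E(N))}(X \times Y) = \rk_P((T \cup X) \times Y) - \rk_P(T \times Y) = \rk_{M \contr T}(X) \cdot \rk_N(Y).
\end{equation*}
This single identity verifies all three tensor product conditions for $P \contr (T \times E(N))$ simultaneously: taking $X = E(M) \sm T$ and $Y = E(N)$ gives the correct total rank $\rk(M \contr T) \rk(N)$; fixing $X = \{e\}$ and letting $Y$ vary shows the row $\{e\} \times E(N)$ has rank function $\rk_N$ when $e$ is a non-loop of $M \contr T$ and rank zero otherwise, so it is isomorphic to $N$ via $y \mapsto (e, y)$ in the non-loop case; fixing $Y = \{f\}$ symmetrically recovers the column restrictions as $M \contr T$.

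The main obstacle is step (ii): contraction does not restrict cleanly to rows and columns, so the row and column isomorphisms cannot simply be inherited from $P$. The product rank formula extracted via characterization~(4) of \Cref{lem:tensorCharacterizations} is the crucial device, reducing the full verification of the tensor product conditions on $P \contr (T \times E(N))$ to a single rank-function computation.
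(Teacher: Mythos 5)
Your proof is correct, but the contraction step follows a genuinely different route from the paper's. The paper reduces to single-element deletions and contractions (and, via \Cref{lem:tensorSimplification}, to simple matroids), and then verifies the quasi-product and rank conditions for $P\contr(e\times E(N))$ by hand, lifting independent sets of the row and column restrictions to independent sets of $P$ and concluding with condition (2) of \Cref{lem:tensorCharacterizations}. You instead first extract the rectangle rank formula $\rk_P(X\times Y)=\rk_M(X)\rk_N(Y)$ by iterating the deletion case along rows and columns, and then compute the entire rank function of $P\contr(T\times E(N))$ on rectangles in one stroke from condition (4) of \Cref{lem:tensorCharacterizations} plus the contraction rank identity; that single identity simultaneously delivers the row and column isomorphisms, the loop conditions, and the total rank, with no single-element reduction or simplification step needed. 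What your route buys is uniformity in the contraction case (loops of $M\contr T$ that were non-loops of $M$ are handled automatically by the rank function) and the product rank formula for the minor as a by-product; the paper's route is more elementary, using only independent-set manipulations and conditions (1)--(2) of the lemma. One small caveat: you are using condition (4) in Las Vergnas's intended form $\rk(X'\times Y\cup X\times Y')=\rk(X'\times Y)+\rk(X\times Y')-\rk(X'\times Y')$; as displayed in the paper the middle term reads $\rk(X\times Y)$, which is a typo (take $X'=Y'=\emptyset$ to see it fails), so your usage is the correct one, but it is worth flagging that you rely on the corrected statement.
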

    \begin{proof}
        It suffices to prove the result for single-element deletions and contractions. Moreover, by \coloref{lem:tensorSimplification}, we may assume that $M$, $N$, and $M\sm e$ are simple matroids.

        First consider the minor $P':=P\sm(e\times E(N))$. We prove that $P'$ is a tensor product of $M\sm e$ and $N$.
        
        To see that $P'$ is a quasi product, first note that for $a\in E(M)\sm e$ we have $P'|_a\cong P|_a\cong N$, so \colorefiso{eqn:tensorRight} holds. Furthermore, for $b\in E(N)$ we have $P'|_b\cong (P|_b)\sm (e\times b)\cong M\sm e$, as desired. Now let $I_M$ be independent in $M\sm e$ and $I_N$ independent in $N$. Then $I_M$ is independent in $M$, and therefore $I_M\times I_N$ is independent in $P$. This implies $I_M\times I_N$ is independent in $P'$. We conclude by \coloref{lem:tensorCharacterizations} that $P'$ is a tensor product of $M\sm e$ and $N$.

        Now let $P':=P\contr (e\times E(N))$. We prove that $P'$ is a tensor product of $M\contr e$ and $N$. Consider the collection 
        $$a\times \sI(N):=\set{a\times I: I\in \sI(N)}.$$
        We prove that $\sI(P'|_a)= a\times \sI(N)$. First let $T$ be independent in $P'|_a$. Then there is an independent set $J'\in \sI(P')$ with $J'|_a=T$. Moreover, this implies there is an independent subset $J\in \sI(P)$ contained in $J'\cup (e\times E(N))$. In particular, $J|_a=T$ is an independent set in $P|_a\cong N$, so $T= a\times I$ for some $I\in \sI(N)$.

        For the reverse inclusion, suppose that $I$ is independent in $\sI(N)$ and let $B$ be a basis containing $I$. Then the set $(a\times I)\cup (e\times B)$ is contained in $\set{a,e}\times B$, which is independent in $P$. In particular, this implies $a\times I$ is independent in $P'$ and thus independent in $P'|_a$. This implies that \colorefiso{eqn:tensorRight} holds.

        Now let $b\in E(N)$. We proceed as before. First let $I_M$ be independent in $M\sm e$, and let $B_N$ be a basis of $N$ that contains $b$. Then $(I_M\cup e)\times B$ is independent in $P$ and contains the set $(I_M\times b)\cup (e\times B_N)$. In particular, $(I_M\times b)$ is independent in $P'$, and therefore in $P'|_b$.

        Now let $T$ be independent in $P'|_b$, say $T=S\times b$ for $S\subset E(M)\sm e$. Consider that since $T$ is independent in $P'|_b$, it is independent in $P'$, and therefore for any basis $B_N$ of $N$, the set $T\cup (e\times B_N)$ is independent in $P$. Suppose then that $B_N$ contains $b$. Then $\left(T\cup(e\times B_N)\right)_b = T\cup e$ is an independent set in $P|_b$. In particular $S\cup e$ is independent in $M$ and therefore $S$ is independent in $M\contr e$, as desired. We conclude $P'$ satisfies \colorefiso{eqn:tensorLeft}

        We now check that $P'$ is indeed a tensor product. Let $I_M$ be independent in $M\contr e$ and $I_N$ independent in $N$. We claim that $I_M\times I_N$ is independent in $P'$. To see this, let $B_N$ be a basis of $N$ containing $I_N$, then $(I_M\cup e)\times B_N$ is independent in $P$, or in particular, $(I_M\times I_N) \cup (e\times B_n)$ is independent in $P$. This implies $I_M\times I_N$ is independent in $P'$. \coloref{lem:tensorCharacterizations} now implies that $P'$ is a tensor product of $M\contr e$ and $N$, as desired.
    \end{proof}

    We now provide a generalization of the proof that the V\'amos matroid has no second tensor power, and we further extend this result to the infinite family of non-algebraic matroids introduced by Lindstr\"om \cite{Lind88}. The matroid $M(r)$ is a rank $r$ matroid defined as follows. Let $A:=\set{a_1,\dots, a_{r-2}}$, $B:=\set{b_1,\dots, b_{r-2}}$, $C:=\set{c_1,\dots, c_{r-2}}$, and $D:=\set{d_1,\dots, d_{r-2}}$ be disjoint sets of size $r-2$. We let the ground set of $M(r)$ be $E_r:= A\cup B\cup C\cup D$. We define the set of \textbf{cyclic flats} of $M(r)$ --- the flats that are unions of circuits --- as $\sH:= \set{A\cup B, A\cup C, A\cup D, B\cup C, B\cup D}$. The cyclic flats of $M(r)$, together with their rank, determine $M$ as a matroid \cite{Bry75}. In the case of $M(r)$, every cyclic flat in $\sH$ has rank $r-1$ and so they are hyperplanes. Observe that $M(4)$ is the V\'amos matroid.

    \begin{proposition}\label{prop:XandY}
        Let $[3]$ denote the ground set of $U_{2,3}$ and let $E:=A\cup B\cup C\cup D$ denote the ground set of $M(r)$. Let $P$ be any quasi product of $U_{2,3}$ and $M(r)$. Then either the set
        $$X:=([3]\times A)\cup (1\times B)\cup (2\times C)\cup (3\times D)$$
        or the set
        $$Y:= ([3]\times B)\cup (1\times A)\cup (2\times C)\cup (3\times D),$$
        is a spanning set.
    \end{proposition}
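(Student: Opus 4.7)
My plan is a submodularity argument backed by two explicit spanning computations. I will show $\rk(X\cup Y)=\rk(P)$ and $\rk(X\cap Y)\geq \rk(P)-1$; then submodularity $\rk(X)+\rk(Y)\geq \rk(X\cup Y)+\rk(X\cap Y)$ forces $\rk(X)+\rk(Y)\geq 2\rk(P)-1$, and since each of $\rk(X),\rk(Y)$ is bounded above by $\rk(P)$, at least one must equal $\rk(P)$, i.e., $X$ or $Y$ is a spanning set.

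The engine of both computations is the following observation about $M(r)$: any subset $S\subseteq E$ meeting at least three of the four parts $A,B,C,D$ spans $M(r)$. Indeed, each of the five cyclic hyperplanes in $\sH$ is a union of exactly two of the parts, so none contains $S$; and a non-cyclic hyperplane would be an independent set of size at most $r-1$, whereas $S$ has at least $3(r-2)>r-1$ elements (for $r\geq 4$). The same argument shows $A\cup B\cup\{c_0\}$ spans $M(r)$ for any $c_0\in C$ and that $C\cup D$ spans $M(r)$.

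For $X\cup Y=[3]\times(A\cup B)\cup\{2\}\times C\cup\{3\}\times D$, the intersections with rows $2$ and $3$ are $\{2\}\times(A\cup B\cup C)$ and $\{3\}\times(A\cup B\cup D)$, each of which spans its row through the quasi product isomorphism $P|_{\{i\}\times E}\cong M(r)$, so rows $2$ and $3$ lie in the closure. For each $c\in C$ the column $P|_{[3]\times\{c\}}\cong U_{2,3}$ already has $(2,c),(3,c)$ in the closure, and any two elements of $U_{2,3}$ span the column, so $(1,c)$ is in the closure as well; analogously $(1,d)$ is in the closure for $d\in D$. Together with $\{1\}\times(A\cup B)$, this covers row $1$, so $X\cup Y$ spans $P$.

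For $(X\cap Y)\cup\{(1,c_0)\}$, where $X\cap Y=\{1\}\times(A\cup B)\cup\{2\}\times C\cup\{3\}\times D$ and $c_0\in C$ is any fixed element, the set $A\cup B\cup\{c_0\}$ spans $M(r)$, so row $1$ is in the closure; column $U_{2,3}$ arguments then pull $(3,c)$ into the closure for each $c\in C$ and $(2,d)$ for each $d\in D$; hence $\{3\}\times(C\cup D)$ lies in the closure and spans row $3$, so in particular $(3,b)$ is in the closure for every $b\in B$; the column at each such $b$ yields $(2,b)$; and finally $\{2\}\times(B\cup C\cup D)$ spans row $2$. This gives $\rk(X\cap Y)\geq \rk(P)-1$, and the submodularity computation above completes the proof. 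The main obstacle is keeping the cascade of closure inferences in the second computation organized so that every step invokes correctly either a row isomorphism $P|_{\{i\}\times E}\cong M(r)$ or a column isomorphism $P|_{[3]\times\{e\}}\cong U_{2,3}$; once the ``three-part'' observation about $M(r)$ is recorded, everything else is routine.
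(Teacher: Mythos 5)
Your proof is correct, and it reaches the conclusion by a genuinely different final count than the paper. The paper argues by contradiction: assuming neither $X$ nor $Y$ spans, it shows that adding \emph{any} single element to $X$ or to $Y$ produces a spanning set (so $X$ and $Y$ are hyperplanes), then shows $Z=X\cap Y$ has corank $1$ via the same row/column closure cascade you use, contradicting the fact that two distinct hyperplanes meet in corank at least $2$. You instead never establish the hyperplane property: you compute directly that $X\cup Y$ spans $P$ and that $(X\cap Y)\cup\{(1,c_0)\}$ spans $P$, and then let submodularity $\rk(X)+\rk(Y)\geq\rk(X\cup Y)+\rk(X\cap Y)\geq 2\rk(P)-1$ force one of $X,Y$ to be spanning. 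The underlying ingredients are the same as the paper's (the column isomorphisms with $U_{2,3}$, the facts that two full parts plus one extra element span $M(r)$ and that $C\cup D$ spans $M(r)$, and the alternating row/column closure propagation), but your packaging avoids the case analysis over an arbitrary added element $(e_1,e_2)$ that the paper needs to certify $X$ and $Y$ as hyperplanes, replacing it with the single computation for $X\cup Y$; this is arguably cleaner, at the cost of one extra spanning computation.

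One small repair: your blanket claim that any subset \emph{meeting} at least three of the parts $A,B,C,D$ spans $M(r)$ is false as stated (a transversal of three parts has only three elements); your own justification, which compares $|S|\geq 3(r-2)$ against the sizes of hyperplanes, only applies to sets \emph{containing} three full parts, and that is how you use it in every instance, so the fix is purely one of wording. Relatedly, the assertion that every non-cyclic hyperplane of $M(r)$ is independent of size at most $r-1$ is true but deserves a line of proof (e.g., from the cyclic-flat rank formula: any circuit of rank $r-1$ is an $r$-subset of one of the five cyclic flats, whose closure is that cyclic flat, so a hyperplane containing a circuit is cyclic); the paper leaves the analogous spanning facts about $M(r)$ equally implicit, so this is a matter of polish rather than a gap.
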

    \begin{proof}
    
        Let $P$ be a quasi product of  $U_{2,3}$ and $M(r)$. If one of $X$ or $Y$ is a spanning set, we're done. So suppose neither $X$ nor $Y$ is spanning. We claim that $X$ and $Y$ are both hyperplanes of $P$. To see this, we show that adding any element to either $X$ or $Y$ results in a spanning set. To make this more clear we refer to the ground set in terms of rows, indexed by the ground set $[3]$ of $U_{2,3}$, and columns indexed by the ground set $E$ of $M(r)$. We first consider $X$. Adding an element $e=(e_1,e_2)$ to $X$ leads to a subset of the form
            $$e_1\times (A\cup T\cup e_2),$$
    where $T\in \set{B, C, D}$. Now the closure of $X\cup e$ necessarily contains all of row $e_1$ as $A\cup T\cup e_2$ is a spanning set of $M(r)$. This now implies that $\ol{X\cup e}$ contains at least $2$ elements in each column of $[3]\times E$, which provides a spanning set of $U_{2,3}$ in each column. Thus the closure in every column is the entire column and so $\ol{X\cup e}=[3]\times E$. A similar argument holds for $Y$, where $Y\cup e$ contains the row $e_1\times (B\cup T\cup e_2)$ where $T\in{A,C,D}$. We conclude that $X$ and $Y$ are hyperplanes of $P$.

    Now consider that $Z:= X\cap Y$ is also a corank-1 set. This follows as as adding the element $e = (1,c_1)$ leads to a spanning set in row 1. Following the previous argument we see that $\ol{Z\cup (1,c_1)}$ contains at least 2 elements in the columns of $C$ and $D$, which is a spanning set in those column. So $\ol{(Z\cup(1,c_1))}$ contains the set $[3]\times (C\cup D)$. However, $C\cup D$ is a spanning set of $M(r)$, and so every row of $\ol{(Z\cup(1,c_1))}$ contains a spanning set. Therefore $\ol{(Z\cup(1,c_1))}= [3]\times E$. This is a contradiction as the intersection of two distinct hyperplanes, namely $X$ and $Y$, must have corank at least $2$, whereas $Z$ has corank $1$. Thus either $X$ or $Y$ must be a spanning set.
    \end{proof}
    
    \begin{proposition}\label{prop:VxU23}
    Any quasi product of $U_{2,3}$ and $M(r)$ has rank at most $2r-1$. In particular, no tensor product between $U_{2,3}$ and $M(r)$ exists.
    \end{proposition}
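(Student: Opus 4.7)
The plan is to combine \coloref{prop:XandY} with an explicit upper bound of $2r-1$ on $\rk(X)$ and $\rk(Y)$: since one of the two is a spanning set of $P$, this forces $\rk(P)\le 2r-1$, and because a tensor product would have to have rank $\rk(U_{2,3})\rk(M(r))=2r$, no tensor product can exist. The bound will be obtained by exhibiting a spanning set of size $2r-1$ for each of $X$ and $Y$.

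The two structural facts I will repeatedly exploit are direct consequences of \coloref{def:tensorproduct} together with the definition of $M(r)$. First, for every non-loop $f\in E$, the column $P|_{[3]\times f}\cong U_{2,3}$, so any two elements of a single column span the entire column. Second, for each $i\in [3]$, $P|_{R_i}\cong M(r)$, and in this copy the sets $A\cup B$, $A\cup C$, $A\cup D$, $B\cup C$, $B\cup D$ are hyperplanes of rank $r-1$. Moreover, since $|A|=|B|=|C|=|D|=r-2$ and each of these sets is contained in a rank-$(r-1)$ flat, the sets $A,B,C,D$ are independent in $M(r)$ with rank $r-2$, and adding one element from any "complementary" block lands in a known cyclic flat of rank $r-1$.

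To bound $\rk(X)$, I will show that the set
\[
S_X \;=\; \bigl(\{2,3\}\times A\bigr)\cup \{(1,b_1),\,(2,c_1),\,(3,d_1)\}
\]
spans $X$. The portion $\{2,3\}\times A$ has size $2(r-2)$ and spans $[3]\times A$, because in each column $[3]\times a_i\cong U_{2,3}$ we have already included two elements. Next, in the row-$1$ copy of $M(r)$ the set $A\cup\{b_1\}$ has rank $r-1$ and its closure is the hyperplane $A\cup B$; hence the row-$1$ portion of $\spann(S_X)$ contains $1\times (A\cup B)$, which covers $1\times B$. The analogous argument in rows $2$ and $3$ using the hyperplanes $A\cup C$ and $A\cup D$ shows that $(2,c_1)$ and $(3,d_1)$ respectively pull $2\times C$ and $3\times D$ into the span. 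Thus $|S_X|=2(r-2)+3=2r-1$ spans $X$. For $Y$, the completely symmetric argument with the roles of $A$ and $B$ exchanged (using instead the hyperplanes $A\cup B$, $B\cup C$, $B\cup D$) produces a spanning set of size $2r-1$ as well.

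The only place where a small verification is needed is the claim that "row $i$ plus one extra element" actually spans the relevant hyperplane: this is precisely where the rank data of the cyclic flats of $M(r)$ (via Brylawski's theorem on cyclic flats, already invoked in the construction of $M(r)$) is used. No other obstacle arises, since after verifying the spanning claim the rank bound follows immediately from $\rk(X)\le |S_X|=2r-1$ and the dichotomy of \coloref{prop:XandY}.
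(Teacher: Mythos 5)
Your proposal is correct and follows essentially the same approach as the paper: invoke \coloref{prop:XandY} to get that one of $X$ or $Y$ spans $P$, then exhibit an explicit $(2r-1)$-element set (two full copies of $A$ across two rows plus one element each from $B$, $C$, $D$) whose closure covers $X$. The paper's proof uses $[2]\times A\cup\{(1,b),(2,c),(3,d)\}$ while you use $\{2,3\}\times A\cup\{(1,b_1),(2,c_1),(3,d_1)\}$, a cosmetic relabeling; you merely spell out the column-spanning and cyclic-flat-closure steps that the paper leaves implicit.
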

    \begin{proof}
        Let $[3]$ denote the ground set of $U_{2,3}$ and let $E:=A\cup B\cup C\cup D$ denote the ground set of $M(r)$. Let $P$ be a quasi product of $U_{2,3}$ and $M(r)$. Suppose $X$ and $Y$ are as in \coloref{prop:XandY}, and without loss of generatlity let $X$ be a spanning set among $X$ and $Y$. We prove $X$ has rank at most $2r-1$. To see this, consider that $X$ is necessarily spanned by $([2]\times A)\cup \set{(1,b), (2,c), (3,d)}$, which contains $2r-1$ elements, thus $\rk(P)=\rk(X)\leq 2r-1$.
    \end{proof}
    
    As a consequence of \coloref{prop:VxU23}, the matroids $M(r)$ have no tensor powers, and we further note the following small corollary
    
    \begin{corollary}\label{cor:VxU23}
    A tensor product between $M(r)$ and $N$ exists if and only if the simplification of $N$ is a free matroid.
    \end{corollary}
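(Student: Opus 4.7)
My plan is to prove the corollary by splitting into the two directions, with the reverse direction being essentially immediate and the forward direction following by extracting a forbidden $M(r) \otimes U_{2,3}$ minor.

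For the $(\Leftarrow)$ direction, if $\si(N)$ is the free matroid $U_{n,n}$, then \coloref{cor:uniqueSimplification} directly produces a (unique) tensor product of $M(r)$ and $N$, so there is nothing else to do.

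For the $(\Rightarrow)$ direction, I would argue by contradiction. Suppose $P$ is a tensor product of $M(r)$ and $N$ while $\si(N)$ is not free. The first step is to reduce to the case where $N$ itself is a simple matroid. This uses \coloref{lem:tensorSimplification}, applied on the right factor rather than the left, which is justified by the manifest symmetry of $M$ and $N$ in \coloref{def:tensorproduct}. Concretely, one deletes from $P$ all loop-columns $E(M(r))\times \{e\}$ (each of rank $0$ by \colorefiso{eqn:tensorRight0}) and, for each non-trivial parallel class of $N$, deletes all but one representative column; the resulting matroid $P'$ is a tensor product of $M(r)$ and $\si(N)$ of rank $\rk(M(r))\rk(\si(N))$.

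Next, since $\si(N)$ is simple but not free, its ground set is dependent, so $\si(N)$ contains a circuit $C$; simplicity forces $|C|\geq 3$. Restricting to $C$ yields $U_{|C|-1,|C|}$, and contracting any $|C|-3$ of its elements (which form an independent set) produces $U_{2,3}$ as a minor of $\si(N)$. The symmetric counterpart of \coloref{prop:tensorMinorClosed} — again immediate from the symmetry of \coloref{def:tensorproduct} under the swap $(e,x)\leftrightarrow (x,e)$ — then extracts from $P'$ a tensor product of $M(r)$ and $U_{2,3}$, directly contradicting \coloref{prop:VxU23}. Hence $\si(N)$ must be free.

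The main obstacle is purely bookkeeping: confirming the symmetric statements of \coloref{lem:tensorSimplification} and \coloref{prop:tensorMinorClosed} for the right factor. Since \coloref{def:tensorproduct} treats $M$ and $N$ interchangeably after swapping coordinates, these are immediate and require no new argument beyond a short remark.
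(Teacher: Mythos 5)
Your proposal is correct and follows essentially the same route as the paper: the reverse direction via \coloref{cor:uniqueSimplification}, and the forward direction by extracting a $U_{2,3}$ minor from $N$ and combining \coloref{prop:tensorMinorClosed} with \coloref{prop:VxU23}. Your only additions — the explicit reduction to $\si(N)$ via \coloref{lem:tensorSimplification} and the remark that the right-factor versions of these statements follow from the symmetry of \coloref{def:tensorproduct} — are harmless bookkeeping that the paper leaves implicit.
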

    \begin{proof}
    If the simplification of $N$ is not free, then $N$ contains a circuit of size at least 3, and therefore $N$ contains $U_{2,3}$ as a minor. By \coloref{prop:tensorMinorClosed} and \coloref{prop:VxU23}, no tensor product between $M(r)$ and $N$ can exist. Otherwise, the simplification of $N$ is free and \coloref{cor:uniqueSimplification} implies that $M(r)$ and $N$ have a unique tensor product.
    \end{proof}
    
    In addition to showing that each $M(r)$ has no tensor product with $U_{2,3}$ we draw the further conclusion that there is an infinite family of forbidden minors to the class of matroids with second tensor power, which is minor closed by \coloref{prop:tensorMinorClosed}.
    \begin{corollary}\label{cor:tensorForbiddenMinors}
        The class of matroids with second tensor power has infinitely many forbidden minors.
    \end{corollary}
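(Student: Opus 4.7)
The plan is to use the infinite family $\{M(r)\}_{r \geq 4}$ of \coloref{prop:VxU23}. By \coloref{prop:tensorMinorClosed} the class $\cC$ of matroids admitting a second tensor power is minor closed and so has a well-defined set of forbidden minors. I would first verify that each $M(r)$ lies outside $\cC$: the matroid $U_{2,3}$ is a minor of $M(r)$, obtained by restricting to any circuit $C$ of $M(r)$ (which has size at least three, in fact at least four) to get the uniform matroid $U_{|C|-1,|C|}$ and then contracting down to three elements. So if $M(r)$ admitted a second tensor power $P$, applying \coloref{prop:tensorMinorClosed} on one factor of $P$ would produce a tensor product of $U_{2,3}$ with $M(r)$, contradicting \coloref{prop:VxU23}. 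In particular each $M(r)$ contains a forbidden minor.

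Since $|E(M(r))| = 4(r-2)$ is strictly increasing in $r$, it suffices to show that each $M(r)$ is itself a forbidden minor; the family $\{M(r) : r \geq 4\}$ will then be an infinite collection of pairwise non-isomorphic forbidden minors and the corollary will follow. This reduces to verifying that every single-element minor $M(r)\setminus e$ and $M(r)/e$ lies in $\cC$. Using the symmetry of $M(r)$ permuting the four blocks $A, B, C, D$, I would further reduce this to the two cases $M(r)\setminus a_1$ and $M(r)/a_1$. The structural heart of the argument is that either operation collapses one of the three cyclic hyperplanes containing $a_1$, destroying the configuration of five rank-$(r-1)$ cyclic hyperplanes on four blocks that underlies the obstruction in \coloref{prop:XandY}. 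Once this configuration is broken I would argue that the resulting matroid is representable, and so, via \coloref{cor:uniqueSimplification} together with \coloref{lem:tensorSums} (after passing to its simplification), admits a second tensor power.

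The hard part will be precisely this last structural step: explicitly verifying that $M(r)\setminus a_1$ and $M(r)/a_1$ lie in $\cC$. One must track how the cyclic-flat configuration of $M(r)$ behaves under a single deletion or contraction and confirm both that no residual obstruction of the \coloref{prop:XandY} type can be reconstructed inside the minor, and that the simplified matroid lands in a class already known (via \coloref{cor:uniqueSimplification}) to admit tensor products. With that check in place, each $M(r)$ is itself a forbidden minor and we obtain infinitely many.
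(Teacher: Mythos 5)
Your plan has a genuine gap at exactly the step you flag as ``the hard part.'' You reduce the corollary to showing that each $M(r)$ is itself a forbidden minor, which requires proving that every proper minor of $M(r)$ --- in particular every single-element deletion $M(r)\sm e$ --- admits a second tensor power. Nothing in the paper supplies this: the cited properties of Lindstr\"om's matroids give realizability only of the contractions $M(r)\contr e$ and of restrictions common to two \emph{different} $M(r)$'s, not of arbitrary single-element deletions of a fixed $M(r)$. Whether $M(r)\sm a_1$ is representable (or otherwise lies in $\cC$) is precisely the nontrivial structural claim your argument rests on and does not establish; without it, it could happen that some proper deletion of $M(r)$ also lacks a second tensor power, in which case $M(r)$ is not a forbidden minor at all and your infinite family evaporates. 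A secondary error: you invoke \coloref{cor:uniqueSimplification} together with \coloref{lem:tensorSums} to conclude that a representable minor admits a second tensor power, but that corollary applies only to matroids whose simplification is a free matroid $U_{n,n}$. The correct justification that representable matroids have tensor (and symmetric) powers is the Kronecker/Macaulay construction for represented matroids discussed in Section~\ref{subsec:tensorProducts}.

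The paper's proof sidesteps your hard step entirely. It argues that each $M(r)$ has no second tensor power (as you do, via \coloref{prop:tensorMinorClosed} and \coloref{prop:VxU23}), so by minor-closedness each $M(r)$ contains \emph{some} forbidden minor; and by Lindstr\"om's properties, for $r_1\neq r_2$ any minor common to $M(r_1)$ and $M(r_2)$ is realizable, hence has a second tensor power, hence cannot be a forbidden minor. Therefore the forbidden minors sitting inside distinct $M(r)$'s are pairwise distinct, yielding infinitely many forbidden minors without ever identifying them or proving that $M(r)$ itself is minor-minimal. If you want to keep your stronger claim, you would need to carry out the deletion analysis for all $r$; for $r=4$ (the V\'amos matroid) it is classical that all proper minors are representable, but the general case requires an argument you have not given.
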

    \begin{proof}
    The matroids $M(r)$ satisfy the following properties \cite{Lind88}
    \begin{enumerate}
        \item $M(r)$ is a non-algebraic matroid,
        \item for any $e\in E_r$, the contraction $M(r) \contr e$ is realizable,
        \item if $r_1< r_2$, and two restrictions satisfy $M(r_1)|_{T_1}\cong M(r_2)|_{T_2}$, then $M(r_1)|_{T_1}\cong U_{r,|T_1|}$ for either $r= r_1$ or $r=|T_1|$.
    \end{enumerate}
    We deduce from these observations that, for $r_1\neq r_2$, any minor common between $M(r_1)$ and $M(r_2)$ is a realizable matroid. In particular, any common minor has second tensor powers. We conclude that each $M(r)$ has a minor, which is not a minor to any other $M(r')$ that is a forbidden minor of the class of matroids with second tensor powers.
    \end{proof}

    While the connection between tensor products and tropical geometry is not as immediate as will be seen with symmetric powers, we are nonetheless able to use tensor products in order to prove certain symmetric powers do not exist, as done in \cite{DR19}.

\subsection{Matroid Symmetric Powers}
We now turn our focus onto symmetric powers of matroids. Recall that, for any set $E$, the $d\th$ symmetric power of $E$ is the set of unordered $d$-tuples of elements of $E$, which we denote $\Sym_d(E)$. If we fix an element $\alpha \in \Sym_i(E)$, there is an injective ``multiplication" map
    \begin{align*}
        *_\alpha: \Sym_d(E)&\to \Sym_{d+i}(E)\\
        \beta&\mapsto (\alpha,\beta).
    \end{align*}
Since the coordinates of elements of $\Sym_{d+i}$ are unordered, we will write $(\alpha,\beta)$ as $\alpha\beta$, taking after commutative multiplication notation. 

We will often consider the subset of elements of $\Sym_{d+i}(E)$ that are divisible by some $\alpha\in \Sym_{i}(E)$, which we denote as

$$\Sym_{d+i}(E)|_{\alpha} := \set{e\in \Sym_{d+i}(E)\mid e= \alpha x}.$$

Given a valuated matroid $\sV$ on $\Sym_{d+i}(E)$ we further extend this notation and write $\sV|_{\alpha}$ instead of $\sV|_{\Sym_{d+1}(E)|_{\alpha}},$ where the restriction of $\sV$ to the set $T\subset [n]$ is given by the subsemimodule 
$$S|_T:=\sV\cap \set{x\in \T^n \mid x_i=\infty \text{ for all } i\notin T}.$$

\begin{definition}\label{def:symprod}
Fixing a matroid $\sV$ on a ground set $E$, a $\mathbf{d}^{\text{\textbf{th}}}$ \textbf{symmetric quasi power} of $\sV$ is a sequence of matroids $(\sV_1,\dots,\sV_d)$ with $\sV_1 = \sV$ and for all $1\leq i \leq d$
\begin{itemize}
    \item $E(\sV_i) = \Sym_i(E),$
    \item for all $\alpha\in \Sym_{d-i}(E)$ for which $\alpha$ is not divisible by a loop the multiplication map $*_\alpha$ given by $y\mapsto \alpha x$ induces the isomorphism
    \begin{equation}\label{eqn:symprodEmbedding} 
        \sV_i\cong \sV_d|_{\alpha}
    \end{equation}
    otherwise
    \begin{equation}\label{eqn:symprodLoops}
        \rk(\sV_d|_{\alpha})=0.
    \end{equation}
    
\end{itemize} 
Moreover, if $(\sV_1,\dots,\sV_d)$ is a symmetric quasi power of $\sV$ with

\begin{equation}\label{eqn:symprodRank}
\rk(\sV_d)= \binom{\rk(\sV)+d-1}{d}
\end{equation}

then we say that $(\sV_1,\dots,\sV_d)$ is a $\mathbf{d}^{\text{\textbf{th}}}$ \textbf{symmetric power} of $\sV$.
\end{definition}

The most familiar and pertinent example of symmetric powers arises from commutative polynomial rings. Given a polynomial ring $k[x_1,\dots,x_n]$ over a field (or semifield) $k$, the degree $d$ monomials in $k[x_1,\dots,x_n]$ exactly correspond to the set $\Sym_d(\set{x_1,\dots,x_n}),$ which we denote by $\Mon_d(n)$ --- the degree $d$ monomials in $n$ variables. The multiplication map $*_\alpha$ is then given by multiplication by the monomial $x^\alpha$ in $k[x_1,\dots,x_n]$.

In the setting of tropical polynomial semirings and symmetric powers, matroids can be constructed as collections of linear tropical polynomials, and their $d\th$ symmetric powers as collections of degree-$d$ tropical polynomials.

\begin{remark}
In \coloref{def:symprod} we define a $d\th$ symmetric power of a fixed valuated matroid $\sV$ to be a sequence $(\sV_1,\dots,\sV_d)$. However, \colorefiso{eqn:symprodEmbedding} provides that $\sV_d$ uniquely determines the entire sequence $(\sV_1,\dots,\sV_d)$, so nothing is lost when referring to $\sV_d$ alone. However, $\sV$ may have multiple $d\th$ symmetric powers, as we will see in \coloref{ex:matrixSymprodExample}, and when constructing a $d\th$ symmetric power it will be useful to explicitly record the entire sequence $(\sV_1,\dots,\sV_d)$. We will further discuss the space of all symmetric powers of a fixed valuated matroid $\sV$ in \coloref{subsec:SpacesofSymPow}.
\end{remark}

\begin{remark}\label{rem:truncation}
If $(\sV_1,\dots,\sV_d)$ is a $d\th$ symmetric power of $\sV$, then for every $1\leq i\leq d$ we have that the truncation $(\sV_1,\dots,\sV_i)$ is an $i\th$ symmetric power of $\sV$.
\end{remark}

\begin{remark}\label{rem:loopless}
\colorefiso{eqn:symprodEmbedding} implies that if $\sV$ is loop-free, then so is every symmetric quasi power of $\sV$. Moreover, \coloref{eqn:symprodLoops} implies that adding or removing loops to $\sV$ to obtain a matroid $\sV'$ uniquely determines how to add or remove loops from any given $d\th$ symmetric quasi power $(\sV_1,\dots,\sV_d)$ to obtain a $d\th$ symmetric quasi power of $\sV'$. In particular, we may assume without loss of generality that $\sV$ is loop-free and all subsequent results extend to matroids with loops.
\end{remark}

\begin{remark}\label{rem:isoMinors}
If $\mu$ is a valuated matroid on $n$ elements and $\mu_d$ is a $d\th$ symmetric quasi power, then for any $\bfu_1, \bfu_2\in \Sym_{i}(E)$ and any $\bfv\in \Sym_{d-i}$, \colorefiso{eqn:symprodEmbedding} implies that 
\begin{equation}\label{eqn:isoMinors}
    (\mu_d|_{\bfu_1})\sm \bfu_1\bfv\cong (\mu_d|_{\bfu_2})\sm \bfu_2\bfv
\end{equation}
This follows as $\mu_d|_{\bfu_1}\cong \mu_{d-i}$ via the isomorphism $\bfu_1 \bfx\mapsto \bfx$. Similarly, $\mu_d|_{\bfu_2}\cong \mu_{d-i}$ via $\bfx\mapsto \bfu_2 \bfx$. \colorefiso{eqn:isoMinors} follows.
\end{remark}

It is also important to note that we refer to \textit{a} symmetric power of a matroid rather than \textit{the} symmetric power, as it may not be unique, not even for realizable matroids.

The following is stated in \cite{MGlueing}, but we restate it here and include a brief proof for intuition's sake.

\begin{proposition}\cite{MGlueing}\label{prop:masonFullRank}
    For a matroid $M$ of rank $r$ the rank of any symmetric quasi power $M_d$ is at most $\binom{r+d-1}{d}$ and at least $r$.
\end{proposition}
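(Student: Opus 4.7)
The plan is to handle the two bounds separately, using the symmetric power axioms (eqn:symprodEmbedding) and (eqn:symprodLoops) to transfer information between $M = \sV_1$ and $\sV_d$. By \coloref{rem:loopless} I may assume $M$ is loop-free, so that the isomorphism $\sV_i \cong \sV_d|_\alpha$ holds for every $\alpha \in \Sym_{d-i}(E)$.

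For the lower bound $\rk(M_d) \geq r$, I would fix any element $e \in E$ and set $\alpha = e^{d-1} \in \Sym_{d-1}(E)$. Since $\alpha$ is not divisible by a loop, \colorefiso{eqn:symprodEmbedding} gives $\sV_1 \cong \sV_d|_{\alpha}$ via multiplication by $\alpha$. Because $\sV_d|_{\alpha}$ is a restriction of $\sV_d$, we have $\rk(\sV_d) \geq \rk(\sV_d|_{\alpha}) = \rk(\sV_1) = r$.

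For the upper bound $\rk(M_d) \leq \binom{r+d-1}{d}$, I would fix a basis $B = \{b_1,\dots,b_r\}$ of $M$ and show that $S := \Sym_d(B)$, which has cardinality $\binom{r+d-1}{d}$, spans $M_d$. Given $\alpha \in \Sym_d(E)$, I would induct on the total multiplicity $k$ of elements of $\alpha$ lying outside $B$. The case $k=0$ is immediate. For $k \geq 1$, write $\alpha = e\beta$ with $e \in E \setminus B$ and $\beta \in \Sym_{d-1}(E)$. Since $B$ is a basis of $M$, there is a circuit $C$ of $M$ with $e \in C \subseteq B \cup \{e\}$. Under the isomorphism $\sV_1 \cong \sV_d|_{\beta}$ from \colorefiso{eqn:symprodEmbedding} (applicable because $\beta$ has no loop factor), the set $\beta C := \{\beta f : f \in C\}$ is a circuit of $\sV_d|_{\beta}$, hence a circuit of $\sV_d$ as well, since restrictions preserve circuits contained in the restricted ground set. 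Consequently $\alpha = e\beta$ lies in the closure in $M_d$ of $\beta(C \setminus \{e\})$, and each element $\beta f$ with $f \in C \setminus \{e\} \subseteq B$ has non-basis multiplicity exactly $k - 1$. The inductive hypothesis places each such element in $\cl_{M_d}(S)$, and therefore so is $\alpha$.

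The only point requiring a moment of care is verifying that $\beta C$, which is a priori only a minimal dependent set in the restriction $\sV_d|_{\beta}$, is genuinely a circuit of $\sV_d$ itself; this is standard, as every proper subset of $\beta C$ lies in $\Sym_d(E)|_{\beta}$ and is independent there, hence independent in $\sV_d$. No other obstacle is anticipated: both bounds reduce to the isomorphism (eqn:symprodEmbedding) applied to a carefully chosen $\alpha$.
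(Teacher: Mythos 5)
Your proof is correct and follows essentially the same approach as the paper: the lower bound comes from the isomorphism $\sV_1\cong\sV_d|_{e^{d-1}}$, and the upper bound from showing $\Sym_d(B)$ spans $M_d$ via \colorefiso{eqn:symprodEmbedding}. The paper states the spanning claim without the inductive justification, which your argument (inducting on the non-basis multiplicity and using fundamental circuits) supplies cleanly.
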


\begin{proof}
For any symmetric quasi power $M_d$ of $M$ and any basis $B$ of $M$, \colorefiso{eqn:symprodEmbedding} implies that the set $\Sym_d(B)$ is a spanning set of $M_d$. Thus the rank of $M_d$ can never be greater than $|\Sym_d(B)|=\binom{r+d-1}{d}$.

\colorefiso{eqn:symprodEmbedding} further implies that for any $e\in E(M)$ and any basis $B\in \sB(M)$, that $eB$ is independent in $M_d$. In particular, $\rk(M_d)\geq r$.
\end{proof}
 In light of \coloref{prop:masonFullRank}, we see that symmetric powers are exactly those symmetric quasi-powers that achieve the upper bound on the rank. Moreover given any field $k$, every $k$-representable matroid $M$ has a $k$-representable $d\th$ symmetric power that can be directly constructed. Given any $k$-matrix $L$ representing $M$ we can construct the $d\th$ \textbf{Macaulay matrix} of $L$, which will be a representation of a $d\th$ symmetric power of $M$. We outline the construction of the second Macaulay matrix here. Let $r\leq n$ and let $L$ be a rank-$r$ $r\times n$ matrix over a field $k$. The second Macaulay matrix of $L$, which we will denote $\Sym_2(L)$, is an $\binom{r+1}{2}\times \binom{n+1}{2}$ matrix whose rows are indexed by $\Sym_2([r])$ and whose columns are indexed by $\Sym_2([n])$. Fixing the standard basis $e_1,\dots,e_r$ of $k^r$, we can write each column $C_j$ of $L$ as a formal sum 
$$C_j = \sum_{i=1}^r L_{i,j}e_i.$$
We can now consider $C_j$ as a linear form in the polynomial ring $k[u_1,\dots,u_r]$ under the inclusion $\phi:k^n\mapsto k[u_1,\dots,u_r]$, $e_i\mapsto u_i$.

The column $ij$ of $\Sym_2(L)$ is computed by taking the product $\phi(C_i)\phi(C_j)\in k[u_1,\dots,u_r]_2$, which is then a formal sum in the basis $\Mon_2(r)$, i.e in basis vectors indexed by $\Sym_2([r])$. In particular we express 
$$\phi(C_i)\phi(C_j)=\sum_{x,y\in[r]} c_{xy,ij}u_xu_y.$$

We now define the entries of the second Macaulay matrix $\Sym_2(L)$ to be, for $x,y\in [r]$ and $i,j\in[n]$
$$\Sym_2(L)_{xy,ij} = c_{xy,ij},$$
that is, the $xy$ coefficient of the product of the polynomials associated to columns $C_i$ and $C_j$. This definition extends to defining $\Sym_{d}(L)$ by considering products in $k[x_1,\dots,x_n]_d.$

As indicated in our notation above, the $d\th$ Macaulay matrix of $L$ is indeed a matrix that represents a $d\th$ symmetric power of the matroid $M$ that $L$ represents. We will demonstrate this for the second Macaulay matrix. 
\begin{example}\label{ex:matrixSymprodExample}
Consider the matrix representing $U_{2,4}$ over any field other than $k_2$ with $\alpha \neq 0,1$
$$M:=\begin{bmatrix}
1 & 0 & 1 & 1\\
0 & 1 & 1 & \alpha
\end{bmatrix}.$$
The second Macaulay matrix of $M$ is a matrix whose column space represents a second symmetric power of $M$. It's columns are indexed by pairs of columns of $M$ and its rows are indexed by pairs of column entries of some fixed basis of $M$, in this case we use the first two columns of $M$.
$$\Sym_2(M):=\kbordermatrix{
        & c_{11}& c_{12}& c_{13}& c_{14} & c_{22}& c_{23}& c_{24} & c_{33}& c_{34}  & c_{44}  \\
r_{11}  &1      & 0     & 1     & 1      & 0     & 0     & 0      & 1     & 1       & 1       \\
r_{12}  &0      & 1     & 1     & \alpha & 0     & 1     & 1      & 2     & \alpha+1& 2\alpha \\
r_{22}  &0      & 0     & 0     & 0      & 1     & 1     & \alpha & 1     & \alpha       & \alpha^2}.$$
Note that the symmetric power of the fixed basis $\set{c_1,c_2}$ is  $\set{c_{11},c_{12},c_{22}}$, which is a basis of $\Sym_2(M),$ so it is indeed rank $3$ as desired.

Consider the minor $B$ consisting of the following columns of $\Sym_2(M)$
$$B:=\kbordermatrix{
        &c_{11} & c_{22}    & c_{34}    \\
r_{11}  &1      & 0         & 0         \\
r_{12}  &0      & 0         & \alpha +1    \\
r_{22}  &0      & 1         & \alpha  }.$$
We can see that $B$ has determinant $-(\alpha+1)$. In particular if the characteristic is not $2$ then we may choose $\alpha= -1$ and $B$ will not be a basis. Otherwise, if we can choose $\alpha\neq 1,0,-1$ then $B$ will be a basis. Similarly, the matrix
$$C:=\kbordermatrix{
        &c_{11} & c_{22}    & c_{44}    \\
r_{11}  &1      & 0         & 1         \\
r_{12}  &0      & 0         & 2\alpha    \\
r_{22}  &0      & 1         & \alpha^2  }.$$
has determinant $-2\alpha$, which is 0 if and only if $k$ has characteristic 2. In particular is never a basis when represented in characteristic $2$ and it is always a basis otherwise.
\end{example}

\begin{remark}
\coloref{ex:matrixSymprodExample} shows that the underlying matroid of the second Macaulay matrix depends both on the field over which we take the representation $L$, as well as the representation $L$ itself over any fixed field. However, if $L$ is a representation of $M$ over the field $F$, then the second Macaualay matrix is preserved under action of $\GL_r(F)$ by left-multiplication in the following sense. If $U\in \GL_r(F)$ we have
    $$\Sym_2(UL) = \Sym_2(U)\Sym_2(L).$$
\coloref{ex:matrixSymprodExample} demonstrates that the analogue for right-hand multiplication by $V\in \GL_n(F)$ does not hold.
\end{remark}

We will prove in \coloref{cor:minorClosed} that the class of matroids with $d\th$ symmetric power is minor-closed. In light of \coloref{cor:tensorForbiddenMinors}, we can further show that the class of matroids with $d\th$ symmetric power has infinitely many forbidden minors
\begin{theorem}\label{thm:sympowForbiddenMinors}
    The class of matroids with $d\th$ symmetric powers has infinitely many forbidden minors.
\end{theorem}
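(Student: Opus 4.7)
The plan is to adapt the strategy of \coloref{cor:tensorForbiddenMinors}: exhibit an infinite family $N_r := M(r) \oplus M(r)$ (for $r \geq 4$) of matroids that admit no $d\th$ symmetric power, and show that no finite list of forbidden minors can account for all of them. Since the class is minor-closed by \coloref{cor:minorClosed}, forbidden minors exist; the task is to produce infinitely many.

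First I would show that $N_r$ has no second symmetric power, hence by \coloref{rem:truncation} no $d\th$ symmetric power for any $d \geq 2$. Since the simplification of $M(r)$ is not free (it contains $U_{2,3}$ as a minor), \coloref{cor:VxU23} rules out any tensor product $M(r) \otimes M(r)$. Now if $(\sV_1,\sV_2)$ were a second symmetric power of $N_r$, writing $E_1, E_2$ for the two copies of $E(M(r))$, the partition $\Sym_2(E_1\sqcup E_2) = \Sym_2(E_1) \sqcup (E_1\times E_2) \sqcup \Sym_2(E_2)$ together with matroid submodularity gives
\[
\rk(\sV_2) \leq \rk(\sV_2|_{\Sym_2(E_1)}) + \rk(\sV_2|_{E_1\times E_2}) + \rk(\sV_2|_{\Sym_2(E_2)}).
\]
By \coloref{prop:masonFullRank} applied to the two outer pieces (which are second quasi powers of $M(r)$), and noting $\rk(\sV_2) = \binom{2r+1}{2} = 2\binom{r+1}{2} + r^2$, all three inequalities are tight; in particular $\rk(\sV_2|_{E_1\times E_2}) = r^2$. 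The isomorphisms $\sV_2|_{e_1} \cong N_r$ for $e_1 \in E_1$ (and symmetrically for $E_2$) then identify $\sV_2|_{E_1\times E_2}$ as a quasi tensor product of $M(r)$ with itself, and the rank calculation promotes it to a genuine tensor product, a contradiction.

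Assuming only finitely many forbidden minors, the pigeonhole principle produces a single forbidden minor $F$ that is a minor of $N_r$ for infinitely many $r$. Since minors of a direct sum are direct sums of minors of the summands, for each such $r$ we may write $F \cong N_1^{(r)} \oplus N_2^{(r)}$ with $N_i^{(r)}$ a minor of $M(r)$. The connected-component decomposition of $F$ admits only finitely many ordered bipartitions, so a second pigeonhole yields fixed $N_1, N_2$ with $F \cong N_1 \oplus N_2$ and each $N_i$ a minor of infinitely many distinct $M(r)$. By the Lindstr\"om properties invoked in the proof of \coloref{cor:tensorForbiddenMinors}, common minors of distinct $M(r_1), M(r_2)$ are realizable, so $N_1$ and $N_2$ are both realizable, whence $F$ is realizable. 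The Macaulay matrix construction then supplies a $d\th$ symmetric power of $F$, contradicting $F$ being a forbidden minor.

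The most delicate step will be the rank accounting in the second paragraph, where the restriction to the ``cross pairs'' $E_1 \times E_2$ must be shown to carry exactly the structure of a tensor product $M(r) \otimes M(r)$; the remainder of the argument is bookkeeping with the pigeonhole principle and inheritance of realizability under direct sums and minors.
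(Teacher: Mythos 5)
Your first half is sound, and it is in fact more self-contained than the paper's: the split $\Sym_2(E_1\sqcup E_2)=\Sym_2(E_1)\sqcup(E_1\times E_2)\sqcup\Sym_2(E_2)$, subadditivity of rank, \coloref{prop:masonFullRank} on the two outer pieces, and the bound $\rk\leq r^2$ for the middle piece (which is indeed a quasi product of $M(r)$ with itself, by restricting the isomorphisms $\sV_2|_{e}\cong N_r$ to rows and columns) do force a tensor product $M(r)\otimes M(r)$ out of any second symmetric power of $M(r)\oplus M(r)$, contradicting \coloref{cor:VxU23}. This amounts to reproving, in your special case, the Draisma--Rinc\'on result that the paper simply cites from \cite{DR19}; the paper instead applies it to the family $M(r)\oplus U_{2,3}$ and then argues distinctness of forbidden minors by comparing circuit sizes, whereas your pigeonhole formulation of that last step is cleaner.

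The genuine gap is the final inference ``$N_1$ and $N_2$ are both realizable, whence $F\cong N_1\oplus N_2$ is realizable.'' Each $N_i$ is only known to be representable over \emph{some} field, and a direct sum of representable matroids need not be representable over any field: $F_7\oplus F_7^-$ is the standard counterexample, since the Fano matroid is representable only in characteristic $2$ and the non-Fano only in characteristic $\neq 2$. The Lindstr\"om properties quoted in the proof of \coloref{cor:tensorForbiddenMinors} give no control over the field (common restrictions are uniform, other common minors sit inside some realizable contraction $M(r)\contr e$), so nothing you have invoked produces a single field over which both $N_1$ and $N_2$ are represented, and without that you cannot feed $F$ into the Macaulay-matrix construction to reach your contradiction. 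This is precisely what the paper's choice of family is engineered to avoid: in $M(r)\oplus U_{2,3}$ the second summand and all its minors are representable over every field, so representability of the $M(r)$-side piece over any field already realizes the whole common minor. The cheapest repair is to run your argument on the paper's family: your rank bookkeeping applies verbatim with $E_2=E(U_{2,3})$, since $\binom{r+1}{2}+2r+3=\binom{r+3}{2}$, forcing a tensor product of $M(r)$ with $U_{2,3}$ and contradicting \coloref{prop:VxU23} directly, and in your pigeonhole step $N_2$ is then a minor of $U_{2,3}$, which eliminates the common-field problem.
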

\begin{proof}
    By \coloref{rem:truncation} it is sufficient to construct a family of forbidden minors for the class of matroids with second symmetric power. 

    Draisma and Rincon prove that if $M$ and $N$ have no second tensor power, then $M\oplus N$ has no second symmetric power \cite{DR19}. By \coloref{prop:VxU23}, for every $r\geq 4$, the matroids $M(r)\oplus U_{2,3}$ have no second symmetric power.

    We further claim that for integers $r_1>r_2\geq 4$, the matroids $M_1:=M(r_1)\oplus U_{2,3}$ and $M_2:=M(r_2)\oplus U_{2,3}$ have no common minors that are not realizable.
    
    We recall that, for all $r$, any contraction $M(r)\contr e$ is realizable, and all deletions common to $M(r_1)$ and $M(r_2)$ are realizable matroids \cite{Lind88}. In particular, a non-realizable minor of $M_1$ has circuits of size $r_1$, $r_1+1$, and at most one of size $3$ or less. Similarly, a non-realizable minor of $M_2$ has circuits of size $r_2$, $r_2+1$ and at most one of size $3$. Since $r_1>r_2>3$, these circuit sets cannot be equal and we conclude that there can be no common non-realizable minors between $M_1$ and $M_2$. The result follows.
\end{proof}

\subsection{Symmetric Powers by Flats}\label{subsec:sympowByFlats}
In \coloref{def:symprod} we decided on an axiomatization for matroid symmetric powers that is potentially stronger than that which is proposed by Mason in \cite{MGlueing}. Originally motivated from Lov\'asz's early work with symmetric powers of realizable matroids \cite{Lovasz77}, Mason proposed the following condition for a symmetric quasi power, as defined in  \coloref{def:symprod}, to be a symmetric power.
\begin{equation}\label{axiom:flatAxiom} 
\text{For every flat } F \text{ of } M \text{ the set } F\cdot \Sym_{d-1}(E) \text{ is a flat of } M_d.
\end{equation}

In \coloref{def:symprod} we instead required that \coloref{eqn:symprodRank} be satisfied.

\begin{proposition}\label{prop:SymprodImpliesFlats}
    If $M$ is a valuated matroid and $(M_1,\dots, M_d)$ is a $d\th$ symmetric power in the sense of \coloref{def:symprod}, then $M_d$ satisfies \colorefaxiom{axiom:flatAxiom}.
\end{proposition}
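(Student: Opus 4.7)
The plan is to induct on $d$. The base case $d=1$ is immediate, since $F$ is a flat of $M=M_1$. For the inductive step, I fix a flat $F$ of $M$ of rank $r'$ and an element $y\in \Sym_d(E\setminus F)$, and the goal is to show $y\notin \overline{F\cdot \Sym_{d-1}(E)}$ in $M_d$. I pick a factor $e\in \mathrm{supp}(y)\subseteq E\setminus F$, write $y=e\,y'$ with $y'\in \Sym_{d-1}(E\setminus F)$, and set
\[
A:=F\cdot \Sym_{d-1}(E),\qquad B:=e\cdot \Sym_{d-1}(E)=M_d|_e.
\]
Let $F_1$ denote the $M$-closure of $F\cup\{e\}$, a flat of rank $r'+1$.

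The first step is to establish the rank formula
\[
\rk_{M_d}(G\cdot \Sym_{d-1}(E))=\binom{r+d-1}{d}-\binom{r-\rk G+d-1}{d}
\]
for every flat $G$ of $M$, where $r=\rk M$. I pick a basis $B_G$ of $G$ and extend to a basis $B_E$ of $M$; by \coloref{prop:masonFullRank} together with the rank identity in \coloref{def:symprod}, $\Sym_d(B_E)$ is a basis of $M_d$. I then use the local isomorphisms from \coloref{def:symprod} twice to show $B_G\cdot \Sym_{d-1}(B_E)$ spans $G\cdot \Sym_{d-1}(E)$: first, for each $\alpha\in \Sym_{d-1}(E)$ and $g\in G$, a fundamental $M$-circuit witnessing $g\in G=\overline{B_G}$ lifts under $M_d|_\alpha\cong M$ to place $g\cdot \alpha$ in $\overline{B_G\cdot \Sym_{d-1}(E)}$; second, for each $b\in B_G$ and $\alpha\in \Sym_{d-1}(E)$, the isomorphism $M_d|_b\cong M_{d-1}$, combined with $\Sym_{d-1}(B_E)$ being a basis of $M_{d-1}$, places $b\cdot \alpha$ in $\overline{b\cdot \Sym_{d-1}(B_E)}$. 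Since $B_G\cdot \Sym_{d-1}(B_E)\subseteq \Sym_d(B_E)$ is independent with cardinality $\binom{r+d-1}{d}-\binom{r-\rk G+d-1}{d}$, the formula follows.

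Applying this formula to $F$ and to $F_1$---and checking by the same circuit-lift argument that $(F\cup\{e\})\cdot \Sym_{d-1}(E)$ and $F_1\cdot \Sym_{d-1}(E)$ have equal $M_d$-closure, so $\rk(A\cup B)=\rk(F_1\cdot \Sym_{d-1}(E))$---yields $\rk A$ and $\rk(A\cup B)$. The truncation $(M_1,\dots,M_{d-1})$ is a $(d-1)$th symmetric power by \coloref{rem:truncation}, so the formula applies at level $d-1$ to $F$; using $M_d|_e\cong M_{d-1}$ to identify $A\cap B=e\cdot F\cdot \Sym_{d-2}(E)$ with $F\cdot \Sym_{d-2}(E)\subseteq E(M_{d-1})$ gives $\rk(A\cap B)$, and $\rk B=\rk M_{d-1}$ similarly. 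A short Pascal's-rule calculation then verifies
\[
\rk A+\rk B=\rk(A\cup B)+\rk(A\cap B),
\]
so $(A,B)$ is a modular pair in $M_d$.

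A standard matroid-lattice argument---semimodularity applied to $\overline{A}$ and $\overline{B}$ together with the modular-pair identity---yields $\overline{A}\cap \overline{B}=\overline{A\cap B}$. Now suppose for contradiction that $y\in \overline{A}$; since $y\in B\subseteq \overline{B}$, I obtain $y\in \overline{A\cap B}=\overline{e\cdot F\cdot \Sym_{d-2}(E)}$, which under $M_d|_e\cong M_{d-1}$ pulls back to $y'\in \overline{F\cdot \Sym_{d-2}(E)}$ in $M_{d-1}$. The inductive hypothesis applied to $M_{d-1}$ says $F\cdot \Sym_{d-2}(E)$ is a flat of $M_{d-1}$, and $y'\in \Sym_{d-1}(E\setminus F)$ lies in its complement, giving the required contradiction. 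The hardest step will be the rank formula above: the two-stage local-to-global spanning reduction needs to be carried out carefully, and one must verify that $(F\cup\{e\})\cdot \Sym_{d-1}(E)$ and $F_1\cdot \Sym_{d-1}(E)$ share the same $M_d$-closure so that the formula can be applied cleanly to the join $A\cup B$.
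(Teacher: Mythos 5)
Your proposal is correct, but it takes a genuinely different route from the paper. The paper proves this result geometrically: it invokes \coloref{prop:symprodCharacterisations}, whose equivalence chain (i) $\Leftrightarrow$ (iii) $\Leftrightarrow$ (iv) reduces the flat condition to the statement that $V(M_d)=B(M)$, and then specializes to $\omega = e_F$. That chain in turn rests on \coloref{cor:bfCharacterisation} and hence on the Hilbert-function bounds of Draisma--Rinc\'on and the Minkowski-weight/connectivity machinery of \coloref{sec:LinTropIdeals}. You bypass all of this with a self-contained matroid-theoretic induction on $d$: you establish the rank formula $\rk_{M_d}(G\cdot\Sym_{d-1}(E))=\binom{r+d-1}{d}-\binom{r-\rk G+d-1}{d}$ via a two-stage local-to-global spanning reduction (first using $M_d|_\alpha\cong M$ to replace $G$ by a basis $B_G$, then $M_d|_b\cong M_{d-1}$ to restrict monomials to a basis of $M$), check via Pascal's rule that $(F\cdot\Sym_{d-1}(E),\ e\cdot\Sym_{d-1}(E))$ is a modular pair, pass to closures by submodularity to get $\overline{A}\cap\overline{B}=\overline{A\cap B}$, and then pull back under $M_d|_e\cong M_{d-1}$ to close the induction. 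I checked the modular-pair identity, the closure argument, and the two spanning steps; they all go through, including the edge cases $F=E$ and $d=2$. The tradeoff is clear: the paper's proof is three lines given the surrounding tropical machinery and reinforces the geometric viewpoint; yours is longer but purely combinatorial, requires none of the \cite{DR19} or Minkowski-weight results, and produces an explicit rank formula for $G\cdot\Sym_{d-1}(E)$ that may be useful elsewhere (for instance, in attacking \coloref{conj:flatcondition}).
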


We will prove \coloref{prop:SymprodImpliesFlats} in \coloref{subsec:sympowCharacterizations}, using our new geometric techniques to avoid an in-depth matroid theoretical argument. While we are able to prove that \coloref{def:symprod} implies \colorefaxiom{axiom:flatAxiom}, the reverse is not at all obvious. Despite substantial effort, we were not able to make progress on this problem and have left it as an open problem left by Lov\'asz and Mason.

\begin{conjecture}(Lovasz '77, Mason '81)\label{conj:flatcondition}
If $(M_1,\dots,M_d)$ is a symmetric quasi power of $M$ and for every $F\in \sF(M)$ we have $F\cdot \Sym_{d-1}(E)\in \sF(M_d)$, then $M_d$ has rank $\binom{\rk(M)+d-1}{d}$.
\end{conjecture}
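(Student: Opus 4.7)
The plan is to proceed by induction on $d$, exploiting the row restrictions $M_d|_e \cong M_{d-1}$ that are built into the definition of a symmetric quasi power. By \coloref{rem:loopless} we may assume $M$ is loop-free, and the base case $d=1$ is immediate since $M_1=M$ has the correct rank $\binom{\rk(M)}{1}$. Suppose inductively that the conjecture holds at level $d-1$.

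The first substep is to descend the flat hypothesis from $M_d$ to $M_{d-1}$. Fix any $e\in E$. By \colorefiso{eqn:symprodEmbedding}, multiplication by $e$ induces an isomorphism $M_{d-1}\cong M_d|_e$. For any flat $F\in \sF(M)$, the hypothesis makes $F\cdot \Sym_{d-1}(E)$ a flat of $M_d$, so its intersection with the restriction ground set $e\cdot \Sym_{d-1}(E)$ is a flat of $M_d|_e$. Pulling back along $\beta\mapsto e\beta$, this intersection corresponds exactly to $F\cdot \Sym_{d-2}(E)$ in $M_{d-1}$ (the case $e\in F$ giving the trivial flat $\Sym_{d-1}(E)$). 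Hence $M_{d-1}$ itself satisfies the flat axiom, and by the inductive hypothesis $\rk(M_{d-1})=\binom{r+d-2}{d-1}$, where $r=\rk(M)$.

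The second and decisive substep is to use the level-$d$ flat axiom, together with the now-known rank of each row, to conclude $\rk(M_d)=\binom{r+d-1}{d}$. I would fix a basis $B=\{b_1,\dots,b_r\}$ of $M$ and attempt to show that the spanning set $\Sym_d(B)$, of size $\binom{r+d-1}{d}$, is independent in $M_d$. The natural levers are the hyperplanes $H_i=\overline{B\setminus b_i}$ of $M$: by hypothesis each $H_i\cdot \Sym_{d-1}(E)$ is a flat of $M_d$, and one can try to show it is in fact a hyperplane of $M_d$ by applying the induction to the smaller matroid $M|_{H_i}$ of rank $r-1$. Semimodular inclusion-exclusion on the lattice generated by these $r$ flats should, in the favorable case, force $\rk(M_d)$ up to the binomial value. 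A complementary approach is to extend $(M_1,\dots,M_d)$ upward to a compatible sequence at all levels, and then invoke \coloref{thm:introThm1} to obtain an underlying tropical ideal whose degree-$d$ component automatically has full rank.

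The chief obstacle — and the reason this has stood open since the 1970s — is that the flat axiom does not transparently rule out rank collapse \emph{inside} the flats $H_i\cdot \Sym_{d-1}(E)$. In the realizable case, the polynomial-ring multiplicative structure precludes any such collapse for free, but for an abstract matroid one must control the ranks of all flats $F\cdot \Sym_{d-1}(E)$ simultaneously, and this is exactly the type of obstruction that drives the non-existence results of \coloref{prop:VxU23} in the tensor setting. A convincing resolution will likely require either an ascending induction that constructs a full tropical-ideal lift from the given data, or a genuinely new matroid-theoretic identity extracted from the flat axiom.
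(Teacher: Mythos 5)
The statement you have attempted is not proved in the paper at all: it is stated as a \emph{conjecture}, attributed to Lov\'asz and Mason, and the author explicitly reports that despite substantial effort no progress was made on it (the paper only proves the converse direction, \coloref{prop:SymprodImpliesFlats}, and reduces the conjecture to the auxiliary open statement \coloref{conj:flatcondition2}). So there is no paper proof to compare against, and your text, judged on its own terms, is a proof plan rather than a proof --- as you yourself acknowledge in the final paragraph.

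Concretely, the gap sits exactly where you place it, and neither of your two proposed finishes closes it. The descent step (intersecting the flat $F\cdot\Sym_{d-1}(E)$ with a row $e\cdot\Sym_{d-1}(E)$ and pulling back to see that $M_{d-1}$ again satisfies the flat axiom) is fine, but it buys almost nothing: the induction on $d$ leaves the full difficulty in the fixed-degree step, since knowing $\rk(M_{d-1})=\binom{r+d-2}{d-1}$ only bounds the rank of a single row of $M_d$. Your first finish, ``semimodular inclusion-exclusion'' over the flats $H_i\cdot\Sym_{d-1}(E)$, runs into the wrong inequality: submodularity bounds ranks of unions from \emph{above}, and the hypothesis gives no lower bound on $\rk(H_i\cdot\Sym_{d-1}(E))$ --- an induction on rank would only control the smaller set $\Sym_d(H_i)$, and nothing rules out rank collapse on the rest of that flat, which is precisely the phenomenon behind the non-existence results in the tensor setting. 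Your second finish, lifting to a sequence in all degrees and invoking \coloref{thm:introThm1}, is circular: that theorem requires genuine full-rank symmetric powers in \emph{every} degree, which is (more than) the conclusion you are trying to establish, and the hypothesis provides no mechanism for extending a degree-$d$ quasi power satisfying the flat axiom to higher degrees. Note also that the paper's own proposed reduction, \coloref{conj:flatcondition2}, points in a different direction from your induction on $d$: it asks whether some contraction $M_d\contr(e\cdot\Sym_{d-1}(E))$ is again a quasi power, i.e., an induction on the ground set, and even that is left open. In short, the conjecture remains open and your argument does not resolve it.
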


In order to prove the conjecture, it is sufficient to prove the following auxiliary result.

\begin{conjecture}\label{conj:flatcondition2}
    If $M$ is a matroid on the ground set $E$, $(M_1,\dots,M_d)$ is a symmetric quasi power of $M$, and for every $F\in \sF(M)$ we have $F\cdot \Sym_{d-1}(E)\in \sF(M_d)$, then there is an $e\in E$ such that the contraction
    $$M_d\contr (e\cdot \Sym_{d-1}(E))$$
    is a symmetric quasi power of $M\contr e$.
\end{conjecture}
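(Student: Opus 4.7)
The plan is to build the sequence by contracting, at each level $i$, the ``row'' corresponding to $e$. By \coloref{rem:loopless} we may assume $M$ is loopless, and after passing to the simplification we may further assume $M$ is simple (parallel classes of $M$ become loops in $M\contr e$ and are handled uniformly by \colorefiso{eqn:symprodLoops}). Fix any $e\in E$. Since $\{e\}$ is a flat of $M$, the hypothesis \colorefaxiom{axiom:flatAxiom} applied to $F=\{e\}$ gives that $e\cdot\Sym_{d-1}(E)$ is a flat of $M_d$. Define the candidate sequence
$$M_i':=M_i\contr\bigl(e\cdot\Sym_{i-1}(E)\bigr),\qquad i=1,\dots,d,$$
so that $M_1'=M\contr e$. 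The decomposition $\Sym_i(E)=\Sym_i(E\setminus e)\sqcup e\cdot\Sym_{i-1}(E)$ identifies the ground set of $M_i'$ with $\Sym_i(E\setminus e)$, as required by \coloref{def:symprod}.

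To show that $(M_1',\dots,M_d')$ is a symmetric quasi power of $M\contr e$, I need to verify the isomorphism $M_d'|_\alpha\cong M_i'$ for each $\alpha\in\Sym_{d-i}(E\setminus e)$. Expanding both sides using the commutativity of restriction and disjoint contraction, both matroids live on the common ground set $\alpha\cdot\Sym_i(E\setminus e)$, and the required identity-on-ground-set isomorphism reduces to the rank identity
$$r_{M_d}(S\cup A)-r_{M_d}(A)=r_{M_d}(S\cup B)-r_{M_d}(B)$$
for every $S\subseteq\alpha\cdot\Sym_i(E\setminus e)$, where $A:=\alpha e\cdot\Sym_{i-1}(E)$ and $B:=e\cdot\Sym_{d-1}(E)$ satisfy $A\subseteq B$ and $S\cap B=\emptyset$. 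Equivalently, the pair $(S\cup A,B)$ must be modular in $M_d$.

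The hard part will be this modular equation. Submodularity yields one inequality for free, so the content is the reverse inequality, which amounts to proving that the flat $B=e\cdot\Sym_{d-1}(E)$ is modular with respect to sets of the form $S\cup A$. As a first step I would show that \colorefaxiom{axiom:flatAxiom} propagates to every level: intersecting any flat $F\cdot\Sym_{d-1}(E)$ of $M_d$ with a cell $\beta\cdot\Sym_i(E)$ (for $\beta$ containing no element of $F$) and pulling back through $M_d|_\beta\cong M_i$ shows that $F\cdot\Sym_{i-1}(E)$ is a flat of $M_i$ for every flat $F$ of $M$. This ``layered'' flat structure, combined with the abundance of cell isomorphisms $M_d|_\beta\cong M_k$, is the natural tool to attack the modularity---for instance by exhibiting a modular complement of $B$ assembled from cells $\beta\cdot\Sym_1(E)$ ranging over a basis $\beta$ of $M\contr e$, or by a downward induction on $d-i$ using that the axiom holds trivially when $i=d$. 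However, this is precisely the point at which elementary matroid-theoretic arguments appear to break down, and given that the Lov\'asz--Mason conjecture has remained open since the 1970s, I expect that a genuinely new input---likely drawn from the tropical and geometric techniques developed elsewhere in the paper---will be required to finish the proof.
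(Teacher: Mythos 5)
There is no proof to compare against: the statement you were asked to prove is \coloref{conj:flatcondition2}, which the paper records as an open \emph{conjecture} (attributed in spirit to Lov\'asz and Mason) and does not prove; the paper even remarks immediately afterwards that for symmetric quasi powers that are not full rank the contraction $M_d\contr(e\cdot \Sym_{d-1}(E))$ can fail to be a symmetric quasi power for particular choices of $e$. Your proposal does not close this gap either. The set-up is reasonable --- identifying the ground set of $M_i':=M_i\contr(e\cdot\Sym_{i-1}(E))$ with $\Sym_i(E\setminus e)$ and reducing the required isomorphisms $M_d'|_\alpha\cong M_i'$ to the rank identity $r_{M_d}(S\cup A)-r_{M_d}(A)=r_{M_d}(S\cup B)-r_{M_d}(B)$ with $A=\alpha e\cdot\Sym_{i-1}(E)\subseteq B=e\cdot\Sym_{d-1}(E)$ --- but that modularity statement is not a technical lemma en route to the conjecture; it essentially \emph{is} the conjecture, and you explicitly concede you cannot prove it. Submodularity gives the easy inequality, and the flat hypothesis alone is a much weaker piece of information than the modular-pair condition you need (compare \coloref{lem:tensorCharacterizations}, where the analogous flat condition and the rank/modularity condition are equivalent only under the full tensor-product rank hypothesis, which is exactly what is unavailable here). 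So the proposal is a restatement plus an acknowledged gap, not a proof.

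Two further cautions if you pursue this. First, you fix an arbitrary $e\in E$ and try to verify the conclusion for it, whereas the conjecture only asserts the existence of \emph{some} $e$; in light of the paper's remark that specific contractions can fail to be quasi powers, a proof valid for every $e$ may be aiming at a statement that is simply false, and the eventual argument will likely have to exploit a genuine choice of $e$ (this is presumably why the conjecture is phrased existentially). Second, your preliminary reduction to simple $M$ is not innocuous: the hypothesis that every $F\cdot\Sym_{d-1}(E)$ is a flat of $M_d$, and the quasi-power structure itself, must be shown to descend to the simplification and to be recoverable afterwards, and parallel elements of $e$ produce loops in $M\contr e$ whose images in the contracted quasi power must be checked against \colorefiso{eqn:symprodLoops}; none of this is argued. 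Your ``layered'' observation that $F\cdot\Sym_{i-1}(E)$ is a flat of $M_i$ for all $i$ is plausible and could be a useful lemma, but as you yourself note, the modularity step is where elementary arguments stall, and the paper offers no route around it.
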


\coloref{conj:flatcondition2} is nontrivial in general, as it is possible for $(M_1,\dots,M_d)$ to be a symetric quasi power, that is not full rank, and for the contraction $M_d\contr (e\cdot \Sym_{d-1}(E))$ to not be a symmetric quasi power for some $M\contr e$. However, it is not clear if this is necessarily possible that this could be the case for all $e$. It is not clear if \coloref{conj:flatcondition} is true, not even for second symmetric powers,  without additional substantial hypotheses.

\subsection{Further characterizations of Matroid Symmetric Powers}\label{subsec:sympowCharacterizations}

In this section we will discuss characterisations of symmetric powers, similar to how \coloref{lem:tensorCharacterizations} treats tensor products. In particular, we will provide a geometric characterization, alongside a characterisation by flats that requires a strictly stronger condition than that of \coloref{conj:flatcondition}.

In order to establish a geometric interpretation of matroid symmetric powers, we will need to define the \textit{variety} of a symmetric power.

\begin{definition}\label{def:matroidVariety}
Let $M$ be a matroid on ground set $[n]\cong\set{x_1,\dots,x_n}$, and let $M_d$ be a symmetric quasi power of $M$ on ground set $\Sym_{d}([n])\cong \Mon_d(n)$. For each circuit $C$ of $M_d$ we define the \textit{circuit polynomial}
    $$p_C := \bigoplus_{x\in C} x\in \T[x_1,\dots,x_n]_d.$$

We can now define the (tropical) \textbf{variety} of $M_d$ as
$$V(M_d) : = \bigcap_{C\in \sC(M_d)} V\left(p_C\right)\subset \T^n.$$
\end{definition}

Given a matroid $M$ on ground set $[n]$, its variety $V(M)\subset \T^n$ is equal to its \textbf{Bergman Fan}. For a symmetric power $M_d$ its variety and Bergman fan are distinct geometric objects. The Bergman fan treats each element of $\Sym_d(E)$ as an independent variable whereas the variety takes into account the algebraic dependencies between the variables. In particular, $B(M_d)\subset\T^{|\Sym_d(E)|}$ and $V(M_d)\subset \T^n$. We further remark that it is always the case that $V(M_d)\subset B(M)$ and that $V(M_d)$ maps to $B(M_d)$ through the Veronese embedding.

\begin{proposition}\label{prop:symprodCharacterisations}
Let $M_d$ be a $d\th$ symmetric quasi power of $M$. The following are equivalent for $M_d$
\begin{enumerate}[label=(\roman*),]
    \item\label{prop:symprodCharacterisations.rk} $M_d$ has rank $\binom{\rk(M)+d-1}{d}$,
    \item\label{prop:symprodCharacterisations.bases} for every basis $B$ of $M$, $\Sym_d(B)$ is a basis of $M_d$,
    \item\label{prop:symprodCharacterisations.bf} the variety $V(M_d)$ is supported on $B(M)$,
    \item\label{prop:symprodCharacterisations.flats} for every $\omega\in B(M)$ the set
    $$\set{x\in \Sym_d(E)\mid x(\omega)>0}$$
    is a flat of $M_d$.
\end{enumerate}
\end{proposition}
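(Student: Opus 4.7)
The plan is to handle the combinatorial conditions (i)--(ii) with a direct counting argument, the geometric conditions (iii)--(iv) via initial matroids of $M_d$, and then bridge the two groups with an argument about minimum-weight bases.

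For (i) $\Leftrightarrow$ (ii), note that by \coloref{prop:masonFullRank} the set $\Sym_d(B)$ is a spanning set of $M_d$ of cardinality $\binom{r+d-1}{d}$ for every basis $B$ of $M$. Hence $\Sym_d(B)$ is a basis iff its cardinality equals $\rk(M_d)$, and this holds for one basis iff it holds for all bases.

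For (iii) $\Leftrightarrow$ (iv), the inclusion $V(M_d)\subseteq B(M)$ is unconditional, since \colorefiso{eqn:symprodEmbedding} makes $\alpha C$ a circuit of $M_d$ for every circuit $C$ of $M$ and every $\alpha\in \Sym_{d-1}(E)$ not divisible by a loop, and the polynomial $p_{\alpha C}$ vanishes at $\omega$ iff $p_C$ does. Thus both (iii) and (iv) reduce to the reverse containment $B(M)\subseteq V(M_d)$. Now $\omega\in V(M_d)$ is equivalent to the initial matroid $\inn_{\omega'}(M_d)$, formed with respect to the induced weight $\omega'(x):=x(\omega)$ on $\Sym_d(E)$, being loopless; and by the Ardila--Klivans-style characterization of Bergman fans, this is equivalent to $\{x\in\Sym_d(E):x(\omega)>t\}$ being a flat of $M_d$ for every $t\in\R$. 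Translation invariance $B(M)=B(M)+\R\cdot\bfone$ together with the identity $x(\omega+c\bfone)=x(\omega)+dc$ imply that running $\omega$ over $B(M)$ while fixing $t=0$ already captures every threshold, so (iv) is equivalent to $B(M)\subseteq V(M_d)$, which is (iii).

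Finally I would close the cycle with (ii) $\Leftrightarrow$ (iii). For $\Rightarrow$, given $\omega\in B(M)$, the $\omega'$-weight of $\Sym_d(B)$ is proportional by symmetry to $\sum_{i\in B}\omega_i$, so the minimum-weight bases of $M_d$ are exactly the sets $\Sym_d(B_\omega)$ for minimum-weight bases $B_\omega$ of $M$. Every $x\in\Sym_d(E)$ whose distinct indices form an independent set of $\inn_\omega(M)$ then lies in some such $\Sym_d(B_\omega)$, so $\inn_{\omega'}(M_d)$ inherits looplessness from $\inn_\omega(M)$, giving $\omega\in V(M_d)$. For $\Leftarrow$ (contrapositive), if some $\Sym_d(B)$ carries a circuit $C'\subseteq\Sym_d(B)$, I would construct $\omega\in B(M)$ using a carefully chosen flag of flats of $M|_B$ so that the values $x(\omega)$ for $x\in C'$ are not all equal, forcing a unique minimizer of $p_{C'}$ and exhibiting a point of $B(M)\setminus V(M_d)$. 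The main obstacle I anticipate is this last construction: producing $\omega\in B(M)$ that both honours the Bergman tie-conditions on every circuit of $M$ and separates the values $x(\omega)$ on the specific circuit $C'$ of $M_d$. A naive perturbation of a Bergman point will violate the tie-conditions; the argument likely proceeds by exploiting the lattice of flats of $M|_B$ with enough parameters to distinguish the elements of $C'$, possibly by induction on $|C'|$ or on $\rk(M)$.
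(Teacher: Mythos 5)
Your handling of (i)$\Leftrightarrow$(ii) is the paper's argument (via \coloref{prop:masonFullRank}), and your (iii)$\Leftrightarrow$(iv) is essentially the paper's as well: the paper argues directly with a circuit attaining its minimum uniquely and the translation by $\R\mathbf{1}$, which is the same content you package as the Ardila--Klivans criterion, and the containment $V(M_d)\subseteq B(M)$ you justify is exactly the paper's standing remark. The problem is the bridge (ii)$\Leftrightarrow$(iii), which is where all the real content lies and where the paper does not argue elementarily at all: it obtains (i)$\Leftrightarrow$(iii) from \coloref{cor:bfCharacterisation}, i.e.\ from the rank/Hilbert-function bounds of Draisma--Rinc\'on (\cite{DR19}*{Propositions 5.4, 5.5}) restricted to a single degree as explained in \coloref{rem:bfCharacterisation}, the same machinery behind \coloref{prop:linearHFequiv} and, at the ideal level, the Minkowski-weight rigidity of \coloref{cor:minkWeights}. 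Your proposal replaces this with a minimum-weight-basis argument that has genuine gaps in both directions.

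Concretely, in (ii)$\Rightarrow$(iii) your symmetry computation only compares bases of $M_d$ of the special form $\Sym_d(B)$; it shows $\Sym_d(B_\omega)$ has least weight \emph{among those}, not among all bases of $M_d$, so the assertion that the $\omega'$-minimum bases of $M_d$ are exactly the $\Sym_d(B_\omega)$ is unsupported (and ``exactly'' is false: for $M=U_{1,2}$, $d=2$, $\omega=0$, one has $M_2=U_{1,3}$ and $\{x_1x_2\}$ is an $\omega'$-minimum basis not of that form). Moreover, looplessness of $\inn_{\omega'}(M_d)$ requires \emph{every} monomial to lie in some $\omega'$-minimum basis, but your covering argument only reaches monomials whose support is independent in $\inn_\omega(M)$; a monomial with dependent support, e.g.\ $x_1x_2x_3$ for $M=U_{2,4}$ and $d=3$, lies in no $\Sym_d(B)$ and is not addressed. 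In (iii)$\Rightarrow$(ii) you yourself flag that the construction of $\omega\in B(M)$ separating a circuit $C'\subseteq\Sym_d(B)$ is missing, and note that arranging the values $x(\omega)$, $x\in C'$, to be ``not all equal'' is weaker than producing a unique minimizer. As it stands the cycle of implications is broken at (ii)$\Leftrightarrow$(iii); to close it along the paper's lines you would invoke the degree-$d$ restriction of \cite{DR19}*{Propositions 5.4 and 5.5} as in \coloref{rem:bfCharacterisation}, rather than the min-weight-basis argument.
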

The condition imposed by \coloritemref{prop:symprodCharacterisations}{flats} is stronger than \colorefaxiom{axiom:flatAxiom}, which can be phrased in the following way.
\begin{quote}
    For every $\omega\in B(M)\cap \R^n_{\geq 0}$. The set
    $$\set{x\in \Sym_d(E)\mid x(\omega)>0}$$
    is a flat of $M_d$.
\end{quote}
Recall that the lineality space of $B(M)$ contains $\R \mathbbm{1}$. In particular, the positive part of the Bergman fan determines the entire fan. The discrepancy between \coloref{conj:flatcondition} and \coloritemref{prop:symprodCharacterisations}{flats} is a matter of whether or not we need to consider the flats corresponding to every point in the Bergman fan, or just the positive part --- the part that determines the entire fan.

\begin{proof} The equivalence between \coloritemref{prop:symprodCharacterisations}{rk}
and \coloritemref{prop:symprodCharacterisations}{bases} follows from \coloref{prop:masonFullRank}. We will prove the equivalence between \coloritemref{prop:symprodCharacterisations}{rk} and \coloritemref{prop:symprodCharacterisations}{bf} in \coloref{cor:bfCharacterisation}. We now prove the equivalence between \coloritemref{prop:symprodCharacterisations}{bf} and \coloritemref{prop:symprodCharacterisations}{flats}.

Suppose that $V(M_d)\neq B(M)$. Then $V(M_d)\subsetneq B(M)$ and so we let $\omega\in B(M)\sm V(M_d)$. Since $\omega\notin V(M_d)$ there is a circuit $C$ of $M_d$ that attains its minimum exactly once at $\omega$, say at the monomial $x$. Thus $x(\omega)$ is the unique minimum value of $C(\omega)$. Since the lineality space of $B(M)$ contains $\R\mathbbm{1}$ we may translate $\omega$ to $\omega' = \omega- \frac{x(\omega)}{d} \mathbbm{1}$ so that $x(\omega')=0$ is the unique minimum value of $C$ at $\omega'$. Now consider that this implies the set
$$S:=\set{y\in \Sym_d(E)\mid y(\omega')>0}$$
contains all of $C$ except for $x$. Thus $S$ is not a flat, as desired.

Now suppose that the set 
$$S = \set{y\in \Sym_d(E)\mid y(\omega)>0}$$
is a not a flat for some $\omega\in B(M)$. We claim that $V(M_d)\neq B(M)$. Since $S$ is not a flat there is a circuit $C$ such that $|C\sm S|=\set{x}$ --- a singleton. In particular, $x(\omega)$ is the unique minimum of $C$ at $\omega$ and so $\omega\notin V(M_d)$ and $V(M_d)\neq B(M)$, as desired.
\end{proof}

\begin{proof}[Proof of \coloref{prop:SymprodImpliesFlats}]
If $M$ is a matroid and $(M_1,\dots,M_d)$ is a symmetric power of $M$ as in \coloref{def:symprod}, then by \coloref{prop:symprodCharacterisations}, for every $\omega\in B(M)$ we have that

$$S_\omega:= \set{x\in \Sym_d(E): x(\omega)>0}.$$
Let $F$ be a flat of $M$. The set $F\cdot \Sym_{d-1}(E)$ is the set of elements of $\Sym_d(E)$ divisible by some element of $F$. Now let $\omega= e_F$. A monomial $x$ satisfies $x(\omega)>0$ if and only if $x$ is divisible by an element of $F$. In particular $F\cdot\Sym_{d-1}(E)=S_\omega$ and is therefore a flat of $M_d$, as desired.
\end{proof}

\subsection{Open Questions}\label{subsec:openQuestions}

We conclude this section by posing a few open questions that we were not successful in pursing in this paper. In the case of linear spaces there is a well-established connection between tensor and symmetric powers of linear spaces via the quotient of the action of the symmetric groups, we pose this as a question in the context of matroid powers

\begin{question}
Is there a relationship, e.g via matroid quotients, between matroid tensor powers and matroid symmetric powers? Are there suitable hypothesis on a matroid tensor power to guarantee the existence of a matroid symmetric power?
\end{question}

 We might further refine the previous question and ask if symmetric powers are, in some sense,  easier to construct than tensor powers.
\begin{question}\label{question:symprodsWithoutTensor}
Can a matroid have a $d \th$ symmetric power and no $d\th$ tensor power, or vice-versa?
\end{question}

A prime candidate for answering this question is to determine if the V\'amos matroid $V_8$ has a second symmetric power. We imagine that it does not, but if it does in fact have one, it would resolve this question and also prove that the class of matroids with second symmetric powers is not closed under duality. As a quick proof of this, if $V_8$ has a second symmetric power $P$, then so would $V_8\oplus U_{1,3}$ via $P\oplus (V_8\otimes U_{1,3})\oplus (U_{1,3})_2$, where $V_8\otimes U_{1,3}\cong V_8$. However, since $V_8$ is self-dual, the dual of $V_8\oplus U_{1,3}$ is $V_8\oplus U_{2,3}$, whose second symmetric power does not exist \cite{DR19}*{Theorem 5.2}. We refine this into its own question.

\begin{question}\label{question:vamosSym2}
Does the V\'amos matroid have a second symmetric power?
\end{question}


\section{Linear Tropical Ideals}\label{sec:LinTropIdeals}

   This section will be focused on what we will call \textit{linear tropical ideals}, the tropical analogue of linear ideals. In the realizable case, linear ideals are perhaps the most simple and structured ideals of positive dimension, and any linear space can be realized as the variety of some linear ideal. However, when working with tropical ideals, valuated matroids, and their associated tropical linear spaces, it is already known that not all tropical linear spaces are tropically realizable varieties \cite{DR19}. Despite this complication in the theory, tropical linear spaces remain a reasonably well-behaved and well-studied class of tropical varieties, see \cite{Speyer08}, \cite{FR15}, \cite{GG18} for a few examples. In the remainder of this paper we will establish further combinatorial and geometric connections.
   
   A homogeneous ideal $I\subset k[x_1,\dots,x_n]$ is called linear if it is generated by linear forms or, equivalently, it is the ideal of polynomials vanishing on a linear space. While tropical ideals are generally not finitely generated, they do have a finite \textbf{tropical basis} \cite{MR18}*{Theorem 5.9}, which is a set of polynomials in the ideal that cuts out the variety. This inspires the following definition.
   
    \begin{definition}    
    A homogeneous tropical ideal $I\subset \T[x_1,\dots, x_n]$ is a \textbf{linear tropical ideal} if its linear forms form a tropical basis, that is
    $$V(I)= V(I_1).$$
    \end{definition}
    The variety of a linear tropical ideal is necessarily a tropical linear space balanced with constant weights $1$.

    We say that an ideal $I\subset \T[x_1,\dots,x_n]$ is \textbf{saturated} with respect to a polynomial $f$ if $fp\in I$ implies $p\in I$. When $f=\prod_{i=1}^n x_i$ we will simply say that $I$ is saturated.
    
    \begin{remark}\label{rem:linSaturated}
    Given a linear tropical ideal $I\subset \T[x_1,\dots, x_n]$, we may express $I$ as a sequence of valuated matroids $(I_1,I_2,\dots,I_d,\dots)$. We will see that $I$ is saturated and that the truncation $(I_1,\dots,I_d)$ is a $d\th$ symmetric quasi-power of the matroid $I_1$.
    \end{remark}

\subsection{Connectivity}\label{subsec:connectivity}
 In order to prove the main results of this section, we will need to show that a $d$-dimensional tropical linear space has no proper tropical subvarieties of dimension $d$. To show this, we will prove a stronger result, which extends a valuable result in matroid theory to all valuated matroids. Specifically, we will show that the space of top-dimensional Minkowski weights of a tropical linear space is one-dimensional. We refer the reader to \cite{MS15}[Section 3.3] for relevant definitions and background for weighted polyhedral complexes, balancing, and the $\Star$ operation for a polyhedral complex.

If $\Sigma$ is a polyhedral complex in $\R^n$, we write $|\Sigma|$ to refer to the \textbf{support} of $\Sigma$, which is the set of points in $\Sigma$ without the polyhedral structure. Recall that the facet-ridge graph of $\Sigma$ is the graph whose vertices are given by the facets of $\Sigma$ and where there are edges between two vertices $\sigma_1$ and $\sigma_2$ if $\sigma_1$ and $\sigma_2$ intersect along a ridge of $\Sigma$. We say that $\Sigma$  is connected through codimension one if its facet-ridge graph $G(\Sigma)$ is a connected graph. We additionally say that $\Sigma$ is locally connected through codimension 1 if $G(\Star_w(\Sigma))$ is connected for every $w\in |\Sigma|$. 

\begin{proposition}\label{prop:connectedCodim1}
Let $\Sigma$ be a connected pure polyhedral complex that is locally connected through codimension one. Then $\Sigma$ is connected through codimension one.
\end{proposition}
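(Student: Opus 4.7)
The plan is to argue by contradiction: assume the facet-ridge graph $G(\Sigma)$ has at least two connected components, and use the local hypothesis to manufacture an edge between them.

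First I would set up the combinatorial decomposition. Suppose $G(\Sigma)$ has components indexing a nontrivial partition of the facets, say $\mathcal{F}_1 \sqcup \mathcal{F}_2 \sqcup \cdots \sqcup \mathcal{F}_k$ with $k \geq 2$. For each $i$, let $\Sigma_i$ denote the subcomplex of $\Sigma$ consisting of the facets in $\mathcal{F}_i$ together with all their faces. Then $\Sigma = \bigcup_i \Sigma_i$, and each $|\Sigma_i|$ is a closed subset of $|\Sigma|$ (polyhedral complexes are locally finite, so finite unions of closed polyhedra are closed; for infinite complexes, each $|\Sigma_i|$ is still closed since complements are open as unions of relatively open cells).

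Next I would extract a shared boundary point. Since $|\Sigma|$ is connected and is written as a union of finitely many (or in general locally finitely many) closed sets $|\Sigma_i|$, these sets cannot be pairwise disjoint: a partition of a connected space into two nonempty disjoint closed sets is impossible. Hence there exist indices $i \neq j$ and a point $w \in |\Sigma_i| \cap |\Sigma_j|$. In particular, there is a facet $F_i \in \mathcal{F}_i$ and a facet $F_j \in \mathcal{F}_j$ both containing $w$.

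Now I would invoke local connectivity at $w$ and lift back to $\Sigma$. By construction, the facets of $\Star_w(\Sigma)$ are in bijection with facets of $\Sigma$ containing $w$, and two facets of $\Star_w(\Sigma)$ share a ridge if and only if the corresponding facets of $\Sigma$ share a ridge containing $w$. Thus $G(\Star_w(\Sigma))$ embeds naturally as a subgraph of $G(\Sigma)$. By hypothesis $G(\Star_w(\Sigma))$ is connected, so it contains a path from $F_i$ to $F_j$; pushing this path into $G(\Sigma)$ gives a walk from $\mathcal{F}_i$ to $\mathcal{F}_j$, contradicting that these were distinct components of $G(\Sigma)$.

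The only delicate point is the topological step: justifying that a connected space cannot be a disjoint union of nonempty closed subcomplexes. For a finite polyhedral complex this is immediate, and in the general (locally finite) setting it follows because each $|\Sigma_i|$ is open in $|\Sigma|$ relative to the disjoint-union decomposition as well, since any point of $|\Sigma_i|$ has a neighborhood in $|\Sigma|$ meeting only finitely many cells, all of which must lie in $\Sigma_i$ if $|\Sigma_i|$ is genuinely disjoint from the other $|\Sigma_j|$. I would state this explicitly to close the argument cleanly.
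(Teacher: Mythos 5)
Your proposal is correct and follows essentially the same argument as the paper: partition the facets by components of the facet-ridge graph, use connectedness of $|\Sigma|$ to find a point $w$ shared by facets from two different components, and then use connectedness of $G(\Star_w(\Sigma))$, viewed as a subgraph of $G(\Sigma)$, to produce a contradictory path. Your explicit justification of the topological step (that disjoint closed supports would disconnect $|\Sigma|$) is a welcome elaboration of a point the paper leaves implicit, but it is not a different approach.
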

\begin{proof}
    Let $\Sigma\subset \R^n$ be a connected, pure, polyhedral complex that is locally connected through codimension one. Further suppose towards a contradiction that $\Sigma$ is not connected through codimension one. Then the facet-ridge graph $G(\Sigma)$ has at least two connected components $C_1, C_2,\dots$. Since the vertices of $C_i$ are cones of $\Sigma$ we define the support of $C_i$ as
    $$|C_i|:= \bigcup_{\sigma\in V(C_i)} \sigma\subset |\Sigma|.$$
    Since $\Sigma$ is connected, there must exist $C_i$ and $C_j$ such that $|C_i|\cap |C_j|\neq \es$. Without loss of generality we suppose, suppose that $|C_1|\cap |C_2|\neq \es$, and let $w\in |C_1|\cap |C_2|$. Since $w\in |C_1|$ and $|C_2|$ there is are facets  $\sigma_1\in V(C_1)$ and $\sigma_2\in V(C_2)$ such that $w\in \sigma_1$ and $\sigma_2$. Now, since $\Sigma$ is locally connected through codimension one, $G(\Star_w(\Sigma))$ is a connected graph. Moreover, by construction, $G(\Star_w(\Sigma))$ is a subgraph of $G(\Sigma)$. Since $\sigma_1$ and $\sigma_2$ are vertices of $\Star_w(\Sigma)$, there is a path connecting $\sigma_1$ and $\sigma_2$ in $G(\Star_w(\Sigma))$, however this now implies there is a path between $\sigma_1$ and $\sigma_2$ in $G(\Sigma)$. Therefore $C_1$ and $C_2$ are not distinct connected components, a contradiction. We conclude that $G(\Sigma)$ is necessarily a connected graph and therefore $\Sigma$ is connected through codimension one.
\end{proof}

The conditions of \coloref{prop:connectedCodim1} are met by a large class of objects studied in tropical Geometry called tropical manifolds, which are meant to be a tropical analogue to classical manifolds. A $d$-dimensional \textbf{tropical manifold} is a polyhedral complex such that, for all $w\in |\Sigma|$, $\Star_w(\Sigma)$ is linearly isomorphic to the Bergman fan of some matroid that has dimension $d$. Of particular value to this paper is that every tropical linear space is a tropical manifold.

\begin{corollary}\label{cor:connectedCodim1}
    Any connected tropical manifold is connected through codimension one.
\end{corollary}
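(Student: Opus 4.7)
The plan is to verify that a connected tropical manifold satisfies the hypotheses of \coloref{prop:connectedCodim1}, namely that it is pure and locally connected through codimension one, after which the conclusion is immediate.

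First I would record purity. Since every star $\Star_w(\Sigma)$ is linearly isomorphic to the Bergman fan of a matroid of dimension $d$, and Bergman fans are pure of the claimed dimension, every point of $|\Sigma|$ lies in some $d$-dimensional cone and every cone of $\Sigma$ is a face of a $d$-dimensional cone. Hence $\Sigma$ is a pure $d$-dimensional polyhedral complex.

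The main content is the local connectedness through codimension one, which reduces to showing that the Bergman fan $B(M)$ of any matroid $M$ is connected through codimension one. I would do this via the standard basis-exchange argument: the maximal cones of $B(M)$ are indexed by maximal chains of flats (equivalently, up to the lineality space, by bases of $M$), and two maximal cones share a ridge precisely when the associated maximal chains differ in exactly one flat, which corresponds to a single basis exchange. By the symmetric basis exchange axiom, any two bases of $M$ are connected by a sequence of single-element swaps, so the facet-ridge graph $G(B(M))$ is connected. Translating through the linear isomorphism $\Star_w(\Sigma)\cong B(M_w)$, the facet-ridge graph of each star is connected, which is exactly local connectedness through codimension one for $\Sigma$.

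The hard part is really just the Bergman-fan statement, but this is classical matroid theory packaged geometrically, and requires no new ideas beyond basis exchange. With purity and local connectedness in hand, \coloref{prop:connectedCodim1} applies directly: any connected, pure polyhedral complex that is locally connected through codimension one is connected through codimension one. Since tropical linear spaces are a subclass of tropical manifolds, \coloref{thm:introThm3} follows as a special case.
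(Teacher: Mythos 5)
Your proof follows the same route as the paper: verify purity and local codimension-one connectedness from the Bergman-fan local structure, then invoke \coloref{prop:connectedCodim1}; the paper simply cites the fact that Bergman fans are connected through codimension one without proof. One caution on the detail you added: the maximal cones of the (fine) Bergman fan are indexed by maximal chains of flats, and these are \emph{not} in bijection with bases modulo lineality (e.g.\ $U_{2,4}$ has $4$ maximal flags but $6$ bases), so the ``single basis exchange'' translation does not quite work as stated; the correct classical argument is via the facet-ridge connectivity of the order complex of the geometric lattice (e.g.\ from shellability), though for the purposes of matching the paper you could, as the paper does, simply cite this fact.
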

\begin{proof}
    The Bergman fan of any matroid is connected in codimension one. Since a connected tropical manifold $M$ is, by construction, locally the Bergman fan of a matroid, it is locally pure and locally connected in codimension one. This implies that $M$ is both pure and, by \coloref{prop:connectedCodim1}, connected in codimension 1.
\end{proof}

\begin{definition}
    Let $\Sigma$ be a polyhedral complex. A $k\th$ Minkowski-weight of $\Sigma$ is an rational-valued function from the $k$-skeleton of $\Sigma$, denoted $\Sigma_k$, to the rationals
    \begin{align*}
        m:\Sigma_k \to \Q
    \end{align*}
    such that $\Sigma_k$ is a balanced polyhedral complex with weights given by $m$. 
\end{definition}

\begin{corollary}\label{cor:minkWeights}
The space of top-dimensional Minkowski-weights of a tropical manifold is a $1$-dimensional $\Q$-vector space and is generated by the constant weight function $w(\sigma)=1$. 
\end{corollary}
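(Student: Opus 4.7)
The plan is to reduce a global Minkowski-weight statement on $\Sigma$ to a local rigidity statement on Bergman fans, then use the codimension-one connectivity of \coloref{cor:connectedCodim1} to propagate the local conclusion. A top-dimensional Minkowski weight $m \colon \Sigma_d \to \mathbb{Q}$ is determined by its values on facets, so it suffices to show that $m$ is constant on every edge of the facet-ridge graph $G(\Sigma)$; \coloref{cor:connectedCodim1} then forces $m$ to be globally constant.

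First I would fix a ridge $\tau$ of $\Sigma$ and a point $w \in \relint(\tau)$. The tropical manifold hypothesis supplies a linear isomorphism $\Star_w(\Sigma) \cong B(M_\tau)$ for some rank-$d$ matroid $M_\tau$. Because $w$ lies in the relative interior of the $(d-1)$-dimensional cell $\tau$, the entire subspace $\spann(\tau)$ sits inside the lineality of $\Star_w(\Sigma)$, and the isomorphism transports this data to a corresponding lineality in $B(M_\tau)$. The facets $\sigma_1,\dots,\sigma_k$ of $\Sigma$ containing $\tau$ are in bijection with the top-dimensional cones of $B(M_\tau)$ through the isomorphism, and the restriction of $m$ becomes a top-dimensional Minkowski weight $\widetilde{m}$ on $B(M_\tau)$. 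I then invoke the well-known fact that the space of top-dimensional Minkowski weights on the Bergman fan of a loopless matroid is one-dimensional, generated by the constant weight $\mathbf{1}$ (established via a direct matroid-theoretic argument, or from the one-dimensionality of the top degree of the Chow ring of a matroid). Consequently $\widetilde{m}$ is constant, and hence $m(\sigma_1) = \cdots = m(\sigma_k)$.

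Since this holds at every ridge, $m$ is constant across every edge of $G(\Sigma)$, and \coloref{cor:connectedCodim1} then implies $m \equiv c$ on each connected component. Conversely, the constant weight $\mathbf{1}$ is genuinely balanced on $\Sigma$ because it is balanced on each local Bergman model, so the space of top-dimensional Minkowski weights is exactly $\mathbb{Q}\cdot\mathbf{1}$. The main obstacle is the transport of the balancing/Minkowski-weight data through the linear isomorphism $\Star_w(\Sigma) \cong B(M_\tau)$: one must verify that primitive normals and lineality spaces match up on both sides so that the local rigidity of Bergman fans can be applied verbatim. A minor subtlety is that the statement implicitly assumes connectedness of $\Sigma$, since a disconnected tropical manifold admits one weight parameter per connected component; this is why I apply \coloref{cor:connectedCodim1} componentwise.
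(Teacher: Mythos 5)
Your proposal is correct and takes essentially the same route as the paper: equate the weights of facets meeting along a ridge by using that the local star is the Bergman fan of a matroid, which is uniquely balanced with constant weights, and then propagate this equality along the facet-ridge graph via \coloref{cor:connectedCodim1}. Your observation that connectedness of $\Sigma$ is implicitly required (a disconnected tropical manifold contributes one weight parameter per component) is a fair point that the paper's statement leaves tacit.
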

\begin{proof}
    Let $\Sigma$ be a $d$-dimensional tropical manifold. Let $w$ be a $d\th$ Minkowski weight of $\Sigma$ and suppose without loss of generality that $w(\sigma)=1$ for some facet $\sigma$. We claim that $w(\sigma')=1$ for all other facets of $\Sigma$. Notice that any facet $\sigma'$ adjacent to $\sigma$ in the facet-ridge graph $G(\Sigma)$ must have weight $1$. This follows as there exists a $w\in \sigma\cap\sigma'$ where $\Star_w(\Sigma)=B(M_w)$ is the Bergman fan of some matroid $M_w$, which has $\sigma$ and $\sigma'$ as facets. Moreover, $B(M_w)$ is uniquely balanced with constant weights, and therefore since $w(\sigma)=1$ we must have $w(\sigma')=1$. By transitivity along the edges of $G(\Sigma)$, this implies that every facet in the same the connected component of $G(\Sigma)$ as $\sigma$ must have weight $1$. Since $G(\Sigma)$ is connected by \coloref{cor:connectedCodim1} we see that every facet $\sigma'$ must have weight $w(\sigma')=1$.
    
\end{proof}
If $\Sigma'\subsetneq \Sigma$ is a containment of tropical varieties of equal dimension, then, after choosing a common refinement on their polyhedral structure, there is a natural embedding of the top-dimensional Minkowski weights of $\Sigma'$ into $\Sigma$. In particular, if the space of Minkowski weights of $\Sigma$ is one-dimensional, then $\Sigma$ can have no proper tropical subvarieties of equal dimension.

\begin{corollary}
    Connected tropical manifolds have no proper tropical subvarieties of equal dimension.
\end{corollary}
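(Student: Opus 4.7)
The plan is to derive a contradiction from Corollary~\ref{cor:minkWeights}, which says that the space of top-dimensional Minkowski weights on any connected tropical manifold is spanned by the constant weight function $w(\sigma)=1$. So suppose, toward contradiction, that $\Sigma'\subsetneq\Sigma$ is a proper tropical subvariety with $\dim\Sigma'=\dim\Sigma=d$, carrying positive integer weights $m_{\Sigma'}:\Sigma'_d\to\Q_{>0}$ that witness its balancing.

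First I would pass to a common polyhedral refinement $\tilde{\Sigma}$ of $\Sigma$ and $\Sigma'$, so that $\Sigma'$ sits inside $\tilde{\Sigma}$ as a subcomplex and every facet of $\tilde{\Sigma}$ is either contained in $|\Sigma'|$ or disjoint from its interior. Since refining does not change the support, $\tilde{\Sigma}$ remains a connected tropical manifold, and its top-dimensional Minkowski weights are still constrained by the same local Bergman-fan uniqueness used in the proof of Corollary~\ref{cor:minkWeights} (any refinement of $B(M_w)$ still has a $1$-dimensional space of balanced weights, because the weights on the pieces of a subdivided facet must agree).

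Next I would define a function $m:\tilde{\Sigma}_d\to\Q_{\geq 0}$ by
\[
m(\sigma) \;=\;
\begin{cases}
\text{the }\Sigma'\text{-weight of }\sigma, & \sigma\subseteq|\Sigma'|,\\
0, & \sigma\not\subseteq|\Sigma'|,
\end{cases}
\]
and check that $m$ is a top-dimensional Minkowski weight on $\tilde{\Sigma}$. The verification is routine: at a ridge $\tau$ with $\tau\subseteq|\Sigma'|$, the balancing sum over facets of $\tilde{\Sigma}$ containing $\tau$ coincides with the balancing sum of $\Sigma'$ at the corresponding ridge (facets outside $|\Sigma'|$ contribute $0$); at a ridge $\tau\not\subseteq|\Sigma'|$, every adjacent facet has weight $0$ and the condition is trivial.

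Finally, because $|\Sigma'|\subsetneq|\Sigma|$, there is some facet of $\tilde{\Sigma}$ with $m=0$ and, since $\Sigma'$ has dimension $d$, some facet with $m>0$. Thus $m$ is not a scalar multiple of the all-ones weight function, contradicting Corollary~\ref{cor:minkWeights} applied to the connected tropical manifold $\tilde{\Sigma}$. The only subtle point, which I would address carefully, is justifying that Corollary~\ref{cor:minkWeights} survives the refinement; one either checks directly that a refinement of a tropical manifold is again a tropical manifold in the sense used there, or observes that its proof only requires connectivity through codimension one of $\tilde{\Sigma}$ together with the rigidity of balanced weights on each local model $B(M_w)$, both of which are preserved under refinement.
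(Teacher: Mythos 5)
Your proposal is correct and follows essentially the same argument the paper sketches in the paragraph immediately preceding the corollary: pass to a common refinement, extend the weight function of $\Sigma'$ by zero to obtain a top-dimensional Minkowski weight on the refined complex, and contradict the one-dimensionality from \coloref{cor:minkWeights}. Your explicit treatment of why the corollary survives refinement is a worthwhile addition to what the paper leaves implicit.
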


\subsection{Dressians of Symmetric Powers}\label{subsec:SpacesofSymPow}
Before continuing to prove that the theory of linear tropical ideals is equivalent to the theory of large symmetric powers of matroids, we must establish that the space of symmetric powers of a fixed matroid is a compact subspace. For this, we will need to define valuated matroids through their Pl\"ucker vectors.

\begin{definition}\label{def:valuatedMatroid}
   A \textbf{valuated matroid} is a vector $\mu\in \TP^{\binom{[n]}{r}-1}$ satisfying the following \textbf{valued exchange axiom}. Let $B_1,B_2\in \binom{[n]}{r}$, then for each $b_1\in B_1\sm B_2$ there exists an $b_2\in B_2\sm B_1$ such that for any representative $\rho$ of $\mu$ we have
        \begin{equation}\label{eqn:basisExchange}
                \rho(B_1)\ttimes\rho(B_2)\geq \rho(B_1\sm b_1\cup b_2)\ttimes\rho(B_2\sm b_2\cup b_1).
        \end{equation}
    We note that if \coloref{eqn:basisExchange} is satisfied for some representative of $\mu$, then it is satisfied for every representative. A valuated matroid $\mu$, when presented as a vector in $\TP^{\binom{[n]}{r}-1}$ is called a \textbf{Pl\"ucker vector}.
    \end{definition}
The set of all Pl\"ucker vectors in $\TP^{\binom{n}{r}-1}$ is called the Dressian $\Dr(r,n)$, which is the moduli space of all valuated matroids of rank $r$ on $n$ elements. If the ground set $E$ of size $n$ is specified, we will write $\Dr(r,E)$ instead of $\Dr(r,n)$. Furthermore, we denote by $\Dr(M)$, the Dressian supported over $M$ or the Dressian of $M$, to be the set of valuated matroids whose underlying matroid is $M$. The Dressian $\Dr(r,n)$ is a closed polyhedral complex in $\TP^{\binom{n}{r}-1}$, moreover, since $\TP^{\binom{n}{r}-1}$ is compact, the Dressian is compact. However, $\Dr(M)$ is usually not compact, as it does not contain its boundary.

Let $M$ be a matroid with ground set $E$ and $T\subset E$. In order to specialize our focus to symmetric powers, we must understand how matroid restriction induces maps from $\Dr(M)$ to $\Dr(M|_T)$. Fix an independent set $I\subset E\sm T$ of rank $r-\rk(T)$ such that $T\cup I$ spans $E$. Given a valuated matroid $\sV\in \Dr(M)$, the restriction of $\sV$ to $T\subset E$ is given by
$$\sV|_T(B) = \sV(B\cup I).$$
The Pl\"ucker vector $\sV|_T\in \TP^{\binom{T}{\rk(T)}-1}$ does not depend on the choice of $I$ \cite{DW92}*{Proposition 1.2}. While the choice of $I$ does not matter when considering points of $\TP^{\binom{T}{\rk(T)}-1}$ we must consider the isometric embedding 
$$\imath_I:\TP^{\binom{T}{\rk(T)}-1}\hookrightarrow\TP^{\binom{[n]}{r}-1}$$ 
given by
 $$ (p_S)_{S\in\binom{T}{r}}\mapsto (a_B)_{B\in \binom{[n]}{r}} $$
 with
 $$a_B = \begin{cases}
 p_S & \text{if } B=S\cup I\\
 \infty & \text{if } B\neq S\cup I \text{ for any } S\subset [T].
 \end{cases}$$

 Given a Pl\"ucker vector $\sV\in \Dr(M)$, a restriction $\sV|_T\in \Dr(M|_{T})$, and a choice of $I$ with $T\cup I$ spanning $M$, we can interpret restriction as a coordinate projection 
 $$\pi_{T\cup I}: \TP^{\binom{n}{r}}\to \TP^{\binom{n}{r}}$$
 
 given by
$$\sV\mapsto \imath_I(\sV|_T).$$

\begin{proposition}\label{prop:plConvergence}
Let $\set{\sV_i}_{i=1}^\infty$ be a convergent sequence of Pl\"ucker vectors in $\Dr(r,n)\subset \TP^{\binom{n}{r}-1}$ with limit $\sV$. Then for any $T\subset [n]$ where $\sV|_T$ has positive rank, the sequence $\set{\sV_i|_T}_{i=1}^\infty$ converges to $\sV|_T$ in $\Dr(\rk(T), T)$
\end{proposition}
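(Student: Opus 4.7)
The plan is to reduce the claim to the continuity of a coordinate projection, using the interpretation of restriction as $\pi_{T\cup I}$ recalled just before the statement. Write $k := \rk(\sV|_T) \geq 1$, and pick an independent set $B \subset T$ of size $k$ in $\sV$ together with $I \subset [n] \setminus T$ of size $r-k$ such that $B \cup I$ is a basis of $\sV$.

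First I would verify that this same $I$ can be used to define $\sV_i|_T$ for all $i$ sufficiently large. Choose a representative $\rho \in \T^{\binom{[n]}{r}}$ of $\sV$ with $\rho(B \cup I) = 0$ (possible since $B \cup I$ is a basis of $\sV$, so $\rho(B \cup I) < \infty$), and representatives $\rho_i$ of $\sV_i$ with $\rho_i \to \rho$ coordinate-wise in $\T^{\binom{[n]}{r}}$. Then $\rho_i(B \cup I) \to 0$, so for $i$ large enough $\rho_i(B \cup I) < \infty$, which means $B \cup I$ is a basis of $\sV_i$; in particular $T \cup I$ spans the underlying matroid of $\sV_i$, and the formula $\sV_i|_T(B') := \sV_i(B' \cup I)$ is well-defined for $B' \in \binom{T}{k}$. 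That this defines an element of $\Dr(k, T)$ follows from the exchange axiom for $\sV_i$: applying it to pairs of bases of the form $B_1' \cup I, B_2' \cup I$ with $B_1', B_2' \in \binom{T}{k}$, the exchanged elements must lie in $T$ since $I$ is common to both bases, yielding the required exchange axiom for $\sV_i|_T$ on $\binom{T}{k}$.

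Finally, for $i$ past this threshold, $\sV_i|_T = \pi_{T\cup I}(\sV_i)$ and $\sV|_T = \pi_{T \cup I}(\sV)$ for the same coordinate projection $\pi_{T \cup I}$, which is continuous in the product topology of tropical projective space (restricted to the open subset where at least one of the retained coordinates is finite, which holds for all $\sV_i$ by the above). The convergence $\sV_i \to \sV$ in $\TP^{\binom{[n]}{r}-1}$ therefore yields $\sV_i|_T \to \sV|_T$ in $\TP^{\binom{T}{k}-1}$, hence in $\Dr(k, T)$. The main subtlety — more a point of care than a real obstacle — is that the intrinsic rank $\rk_{\sV_i}(T)$ may strictly exceed $k$, in which case the rank-$k$ object produced by this formula is a rank-$k$ truncation of the maximal-rank restriction; this is nevertheless the interpretation consistent with the fixed choice of $I$ across the sequence, and the exchange-axiom check above ensures the resulting Plücker vector genuinely lies in $\Dr(k, T)$.
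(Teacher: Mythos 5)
Your choice of $I$ --- take a maximal independent subset $B$ of $T$ in the limit $\sV$, complete it to a basis $B\cup I$ of $\sV$, and observe that $B\cup I$ is a basis of $\sV_i$ for all large $i$ --- is essentially the device that makes the paper's argument work, and the continuity-of-projection step is unobjectionable. The gap is exactly the point you flag at the end and then wave away. Your $I$ has size $r-k$ with $k=\rk(\sV|_T)$ computed in the limit matroid $\uMat(\sV)$, whereas the restriction $\sV_i|_T$ is by definition computed from an independent set of size $r-\rk_{\uMat(\sV_i)}(T)$. If $\rk_{\uMat(\sV_i)}(T)>k$, then the vector $B'\mapsto \sV_i(B'\cup I)$, $B'\in\binom{T}{k}$, is not $\sV_i|_T$ at all: it is the rank-$k$ minor obtained by restricting the contraction $\sV_i/I$ to $T$. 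So what your argument proves is convergence of these minors to $\sV|_T$; it says nothing about the sequence $\set{\sV_i|_T}$ named in the statement, which in that case would not even lie in $\Dr(k,T)$. Declaring the truncation to be ``the interpretation consistent with the fixed choice of $I$'' changes the statement rather than proving it, and the discrepancy is not vacuous: the positive-rank hypothesis only forbids the rank of $T$ dropping to zero in the limit, not dropping at all (e.g.\ in $\Dr(2,4)$ take all Pl\"ucker coordinates $0$ except $p_{34}=i\to\infty$ and $T=\set{3,4}$; then $\rk_{\uMat(\sV_i)}(T)=2$ while $\rk(\sV|_T)=1>0$).

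This identification is precisely where the paper's proof spends its effort. It first places the whole sequence in a single $\Dr(M)$ (there are finitely many underlying matroids), computes the coordinatewise limit of $(\sV_i|_T)_S=(\sV_i)_{S\cup I}$ for an $I$ admissible for $M$, and then argues --- this is where the hypothesis that $\sV|_T$ has positive rank is invoked --- that $I$ can be chosen so that some $S\cup I$ with $S\subset T$ remains a basis of $\uMat(\sV)$, hence the same $I$ is admissible for the limit and the limiting vector really is $\sV|_T$. Your write-up needs the corresponding statement in the other direction: that for large $i$ the rank of $T$ in $\uMat(\sV_i)$ equals $k$, after which your fixed $I$ is a legitimate defining set for $\sV_i|_T$ and the independence of the restriction from the choice of $I$ (the Dress--Wenzel fact recalled just before the statement) identifies your vectors with $\sV_i|_T$. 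Either supply that rank equality, or make explicit that the proposition is being read under it; as written, your proof establishes a proposition about different objects.
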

\begin{proof}
 Since $\Dr(r,n)$ is a finite union of the sets $\Dr(M)$, we may assume that the entire sequence $\set{\sV_i}_{i=1}^\infty$ lies in $\Dr(M)$ for fixed $M$, but possibly converges to a point in the boundary of $\Dr(M)$.  Since $\sV$ has at least one basis $B$, we choose to present our vectors such that $(\sV_i)_B = 0$. 
 Consider that for each $A\in \binom{[n]}{r}$ that 
 $$\lim_{i\to \infty} (\sV_i)_A = \sV_A.$$
 For any choice of independent set $I\subset [n]\sm T$ of rank $r-\rk(T)$ such that $T\cup I$ spans $M$ we have the following. Let $S\in \binom{T}{\rk(T)}$, the sequence
 $$\set{(\sV_i|_{T})_S}_{i=1}^\infty$$ 
 converges to $\sV_{S\cup I}$ as
 $$\lim_{i\to\infty}\set{(\sV_i|_{T})_S}_{i=1}^\infty=\lim_{i\to\infty}\set{(\sV_i)_{S\cup I}}_{i=1}^\infty=\sV_{S\cup I}.$$
 
Let $\rho\in \TP^{\binom{T}{\rk(T)}}$ be the vector with coordinates 
$$\rho_S = \sV_{S\cup I}.$$
We prove that there is a choice of independent set $I$ for which $T\cup I$ spans $\uMat(\sV)$ and that $I$ has rank $r-\rk(T)$ complementary rank in $\uMat(\sV)$, as given this we then have that $\rho_S = \imath\inv(\pi_{T\cup I}(\sV))= \sV|_T.$ Since $\rho_S = \sV_{S\cup I}$, we see that $\rho_S<\infty$ if and only if $S\cup I$ is a basis of $\uMat(\sV)$. Since $\rho$ has at least one basis then $\rho_S = \sV_{S\cup I}$ and $S\cup I$ is a basis of $\uMat(\sV)$. In particular $S\cup I\subset T\cup I$ spans $\uMat(\sV)$, $S$ has rank $\rk(T)$, and $I$ has rank $r-\rk(T)$, as desired. Therefore $\rho = \imath\inv(\pi_{T\cup I}(\sV)) = \sV|_T$, as desired.
\end{proof}

\begin{proposition}\label{prop:closedcompact}
Let $M$ be a fixed valuated matroid of rank $r$ in $\T^{[n]}$. Set
$$r_d:= \binom{r+d-1}{d}$$
$$n_d:= \binom{n+d-1}{d}.$$
Then the collection $\Dr_d(M)$ of $d\th$ symmetric powers of $M$ form a compact polyhedral complex in $\Dr(r_d,n_d)$.
\end{proposition}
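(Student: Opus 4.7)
The plan is to exhibit $\Dr_d(M)$ as a closed polyhedral subcomplex of the compact polyhedral complex $\Dr(r_d,n_d)$, from which compactness follows. By \coloref{rem:loopless} I may assume $M$ is loop-free so that condition \colorefiso{eqn:symprodLoops} is vacuous. Since $\sV_d$ alone determines the full sequence $(\sV_1,\ldots,\sV_d)$ via \colorefiso{eqn:symprodEmbedding}, I identify $\Dr_d(M)$ with the set of $\sV_d \in \Dr(r_d,n_d)$ such that, for each $1 \leq i < d$ and each pair $\alpha_1,\alpha_2 \in \Sym_{d-i}(E)$, the restrictions $\sV_d|_{\alpha_1}$ and $\sV_d|_{\alpha_2}$ coincide in $\Dr(r_i,n_i)$ after relabelling via $*_{\alpha_1}$ and $*_{\alpha_2}$, and moreover that for $i=1$ this common value equals $M$.

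To establish closedness I take a convergent sequence $\sV_d^{(j)} \to \sV_d$ in $\Dr(r_d,n_d)$ with each $\sV_d^{(j)} \in \Dr_d(M)$. Fix a basis $B_0$ of $M$ and, for each $\alpha$, an independent set $I_\alpha$ completing $\Sym_d(E)|_\alpha$ to a spanning set. After choosing compatible representatives so that the Pl\"ucker coordinates converge, the condition that $\sV_d^{(j)}|_\alpha = M$ after relabelling supplies scalars $c_{j,\alpha}$ with $\tilde\sV_d^{(j)}(B\cup I_\alpha) = M(B) + c_{j,\alpha}$ for every basis $B$ of $M$. Evaluating at $B_0$ forces $c_{j,\alpha} \to c_\alpha := \tilde\sV_d(B_0\cup I_\alpha) - M(B_0) \in \RR$, and the identity $\tilde\sV_d(B\cup I_\alpha) = M(B) + c_\alpha$ passes to the limit. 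In particular $B_0$ lies in the support of $\sV_d|_\alpha$, so $\sV_d|_\alpha$ has positive rank, and \coloref{prop:plConvergence} confirms $\sV_d|_\alpha = \lim_j \sV_d^{(j)}|_\alpha$ and hence agrees with $M$. The same argument handles the higher-level conditions.

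For the polyhedral structure, I restrict to a maximal cone $\sigma$ of $\Dr(r_d,n_d)$ on which the underlying matroid of $\sV_d$ is constant; over $\sigma$ every defining condition becomes a finite system of linear equalities among the log Pl\"ucker coordinates of $\sV_d$, modulo the projective scaling. Thus $\Dr_d(M)\cap \sigma$ is a polyhedron, and the resulting family, one per cone of $\Dr(r_d,n_d)$, glues into a polyhedral subcomplex of $\Dr(r_d,n_d)$. Being closed in a compact space, $\Dr_d(M)$ is compact.

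The main obstacle is the interplay between the projective scaling of Pl\"ucker vectors and the positive-rank hypothesis of \coloref{prop:plConvergence}: restriction is only continuous when the target retains positive rank, so I must exhibit a basis that survives the limit to pin down the projective scalar $c_{j,\alpha}$. This is resolved by tracking the coordinate $\tilde\sV_d^{(j)}(B_0\cup I_\alpha)$, which stays bounded and converges in $\RR$ rather than escaping to $\infty$.
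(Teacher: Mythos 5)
Your overall strategy is the same as the paper's: cut $\Dr_d(M)$ out of the compact Dressian $\Dr(r_d,n_d)$ by conditions expressing agreement of the restrictions $\sV_d|_\alpha$, and argue these conditions are closed and polyhedral. The gap is in your closedness argument, at exactly the point you yourself flag as the main obstacle. You fix one completing set $I_\alpha$ per $\alpha$ and assert that the coordinates $\tilde{\sV}_d^{(j)}(B_0\cup I_\alpha)$ ``stay bounded and converge in $\RR$,'' but you give no argument for this, and nothing in your setup yields it: the agreement conditions only tie together coordinates within a single block-with-completion family, and by themselves they are compatible with the scalars $c_{j,\alpha}$ drifting to $+\infty$ relative to the convergent representative. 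If that happens, every $\tilde{\sV}_d(B\cup I_\alpha)$ is $\infty$, your formula for $c_\alpha$ is meaningless, and the positive-rank hypothesis of \coloref{prop:plConvergence} is exactly what is missing. There are also two structural problems with tracking a single pre-chosen $I_\alpha$: the underlying matroids of the $\sV_d^{(j)}$ can vary along the sequence, so a fixed $I_\alpha$ need not be a valid completion for all $j$ unless you first pass to a subsequence with constant underlying matroid (as the proof of \coloref{prop:plConvergence} does); and in the limit the set of valid completions can change, so divergence of the coordinates attached to your chosen $I_\alpha$ would not even decide the question, since the block could retain rank $r$ via a different completion.

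To see that the asserted boundedness has real content, take $M=U_{2,3}$ with the zero valuation and $d=2$. The agreement conditions force all Pl\"ucker coordinates of $\sV_2$ meeting some block $x_i\cdot\Sym_1(E)$ in two elements to share a single value $\lambda$, force the three within-block triples to be $\infty$, and leave the coordinate on $\set{x_1^2,x_2^2,x_3^2}$ as a free parameter $\mu$; on their own they would permit $\lambda-\mu\to+\infty$, producing a limit in $\Dr(3,6)$ in which every block drops rank. What forbids this is not the agreement conditions but the three-term exchange relations, which force $\mu\geq\lambda$ and hence pin the block coordinates at the minimum so that they survive the limit. Your proof never invokes the exchange relations (or any other mechanism) at this step, so the key claim is restated rather than proved; everything downstream of it (the application of \coloref{prop:plConvergence}, the passage to higher levels) depends on it. The remaining parts are fine: your reformulation of membership as pairwise agreement of $\sV_d|_{\alpha_1}$ and $\sV_d|_{\alpha_2}$ with the level-one value pinned to $M$ is a reasonable version of the paper's conditions, and the cone-by-cone polyhedrality discussion is at the same level of detail as the paper's; the gap is solely the unjustified boundedness/closedness step.
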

\begin{proof}
Fix a valuated matroid $M$. We note that $M_d\in \Dr_d(M)$ determines $(M,M_2,\dots,M_d)$ uniquely, thus we only need to record the Pl\"ucker vector of $M_d$ in this construction, hence we work in $\TP^{\binom{n_d}{r_d}-1}$. 

It is sufficient to prove that the set of vectors in $\TP^{\binom{n_d}{r_d}-1}$ satisfying \coloref{eqn:symprodEmbedding} forms a closed polyhedral complex.

Let $p$ be the Pl\"ucker vector of $M_{d-1}$ and for each $\omega\in [n]$ fix a set $I_\omega$ such that $\omega\Sym_{d-1}(E)\cup I_\omega$ spans $\uMat(M_d)$. For simplicity, we abbreviate the associated coordinate projection by
$$\pi_\omega: = \pi|_{(\omega \Sym_{d-1}(E)\cup I_\omega)}.$$

The set of vectors in $\Dr(r_d,n_d)$ that satisfy \coloref{eqn:symprodEmbedding} for the embedding of $M_{d-1}$ by $\omega$ into $M_d$, for all $\omega\in [n]$, are those vectors in the intersection of the $Dr(r_d,n_d)$ and the following set
$$S:=\bigcap_{\omega\in [n]}\set{v\in \TP^{\binom{n_d}{r_d}-1}: \pi_\omega(v) = p}.$$

We now note that the set
$$S_\omega:= \set{v\in \TP^{\binom{n_d}{r_d}-1}: \pi_{\omega}(v) = p}$$

is a closed polyhedral subset of $\TP^{\binom{n_d}{r_d}}$, and therefore the finite intersection $S$ over all $S_\omega$ is closed and polyhedral. We conclude that $\Dr_d(M)=S\cap \Dr(r_d,n_d)$ is a compact polyhedral complex. 
\end{proof}

\subsection{Hilbert Functions}\label{subsec:HF}
    Now that we've established that a tropical linear space cannot have any proper tropical subvarieties of equal dimension, we can return to our discussion of linear tropical ideals. Recall that the \textbf{Hilbert function} of a homogeneous tropical ideal $I\subset \T[x_1,\dots,x_n]$ is given by
    $$H_I(d):= \rk(I_d).$$
    While this definition is given for homogeneous tropical ideals, it extends to a definition for ideals in $\T[x_0,\dots x_n]$ or $\T[x_1^{\pm1},\dots x_n^{\pm1}]$. See \cite{MR20}*{Lemma 2.1} for a comprehensive description. 
    
    The Hilbert function of a tropical ideal $I$ tells us valuable information about the dimension of the tropical variety $V(I)$, and this fact motivates what follows. In order to establish bounds on the value $H_I(d)$ we will need to define the independence complex of a tropcial ideal $I$.
\begin{definition}\label{def:independenceComplex}
    The \textbf{independence complex} of a tropical ideal $I\subset\T[x_1,\dots,x_n]$ is the set system on $[n]$ given by
    $$\sI(I):= \set{A\subset [n]: I\cap \T[x_i]_{i\in A} = \set{\infty}}$$
    or equivalently
    $$\sI(I):= \set{A\subset [n]: \pi_{A}(V(I))=\T^{A} }.$$
\end{definition}

\begin{proposition}\label{prop:linearHFequiv}
    Let $I\in \T[x_1,\dots,x_n]$ be a tropical ideal with $\rk(I_1)=r$. Then $I$ is linear if and only if
    $$H_I(d)=\binom{r+d-1}{d}.$$
\end{proposition}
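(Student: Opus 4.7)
The plan is to prove both directions by combining the geometric characterization of symmetric powers from \coloref{prop:symprodCharacterisations} with the rigidity of tropical linear spaces as connected tropical manifolds established in \coloref{cor:connectedCodim1} and \coloref{cor:minkWeights}.

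For the forward direction, suppose $I$ is linear, so $V(I)=V(I_1)$. By \coloref{rem:linSaturated}, the truncation $(I_1,\dots,I_d)$ is a $d\th$ symmetric quasi-power of $I_1$, so \coloref{prop:symprodCharacterisations} applies. The inclusions
$$V(I)\;\subset\;V(I_d)\;\subset\;B(I_1)$$
hold in general: the first because each circuit polynomial of $I_d$ lies in $I$, and the second by the remark immediately following \coloref{def:matroidVariety}. Linearity collapses the chain via $V(I)=V(I_1)=B(I_1)$, giving $V(I_d)=B(I_1)$, and then \coloref{prop:symprodCharacterisations} delivers $\rk(I_d)=\binom{r+d-1}{d}$, i.e.\ $H_I(d)=\binom{r+d-1}{d}$.

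For the reverse direction, assume $H_I(d)=\binom{r+d-1}{d}$ for every $d$. This Hilbert function is a polynomial of degree $r-1$ in $d$, so the tropical Hilbert-polynomial theory of \cite{MR20} forces $\dim V(I)=r$. The containment $I_1\subset I$ gives $V(I)\subset V(I_1)$, and $V(I_1)$ is a connected tropical manifold of dimension $r$. The corollary immediately following \coloref{cor:minkWeights} rules out any proper tropical subvariety of $V(I_1)$ of equal dimension, so $V(I)=V(I_1)$ and $I$ is linear.

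The main obstacle is substantiating \coloref{rem:linSaturated} for use in the forward direction, i.e.\ showing that for every non-loop $\alpha\in\Sym_{d-i}([n])$ the multiplication map $*_\alpha:I_i\to I_d|_\alpha$ is an isomorphism of valuated matroids. The embedding $x^\alpha\cdot I_i\subseteq I_d|_\alpha$ is automatic from $I$ being an ideal, but surjectivity amounts to a saturation statement: any element of $I_d$ supported on monomials divisible by $x^\alpha$ must equal $x^\alpha$ times an element of $I_i$. I would isolate this as a preliminary lemma and prove it by working on the strata of $V(I_1)$ where the support of $\alpha$ avoids the loops of $I_1$, using $V(I)=V(I_1)$ to match the matroidal data of $I_i$ and $I_d|_\alpha$ degree-by-degree.
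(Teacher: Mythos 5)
Your reverse direction is essentially the paper's argument: degree of the Hilbert polynomial gives the dimension of $V(I)$ (via \cite{MR20}), $V(I)\subset V(I_1)$, and the corollary after \coloref{cor:minkWeights} excludes proper equal-dimensional tropical subvarieties, forcing $V(I)=V(I_1)$. (Watch the off-by-one: the paper takes $\dim V(I_1)=r-1$, matching the degree $r-1$ of $\binom{r+d-1}{d}$; your "$\dim V(I)=r$" is a convention slip, though harmless since you apply it consistently.) The forward direction, however, has a genuine gap. You route it through \coloref{rem:linSaturated} plus the implication \coloritemref{prop:symprodCharacterisations}{bf}$\Rightarrow$\coloritemref{prop:symprodCharacterisations}{rk}, but neither of these is available at this point in the paper's logical order. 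The remark that $(I_1,\dots,I_d)$ is a symmetric quasi-power of $I_1$ is only substantiated later, in \coloref{cor:LTIsaturated} and \coloref{cor:hfSymprodEquiv}, whose proofs use \coloref{prop:linearHFequiv} itself; and the equivalence of \coloritemref{prop:symprodCharacterisations}{rk} and \coloritemref{prop:symprodCharacterisations}{bf} is deferred to \coloref{cor:bfCharacterisation}, which is justified by \coloref{rem:bfCharacterisation}, i.e.\ by exactly the results of \cite{DR19} that the paper deploys directly in its proof of this proposition. So your forward direction either becomes circular or silently reduces to the same external inputs.

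You do flag the key missing piece — surjectivity of $*_\alpha$, i.e.\ that any element of $I_d$ supported on monomials divisible by $x^\alpha$ is $x^\alpha$ times an element of $I_{d-i}$ — but the proposed proof ("work on the strata of $V(I_1)$ and match matroidal data degree-by-degree") is a restatement of the goal, not an argument; equality of varieties does not by itself control the degree-$d$ graded pieces of $I$, which is precisely the content being proved. Note also that you do not actually need the quasi-power structure here: the paper's forward direction avoids it entirely by citing \cite{DR19}*{Proposition 5.5} for the upper bound $H_I(d)\le\binom{r+d-1}{d}$ (since $V(I)=V(I_1)$ is an $(r-1)$-dimensional tropical linear space) and \cite{DR19}*{Proposition 5.4} together with \cite{Yu17}*{Lemma 3} for the lower bound, via the independence complex of $I$ containing an independent set of size $r$. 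A smaller point: for a genuinely valuated $I_1$ the relevant target is $\trop(I_1)$ rather than the Bergman fan $B(I_1)$, and the circuit polynomials of $I_d$ in the sense of \coloref{def:matroidVariety} (trivial coefficients) need not lie in $I$, so even your inclusion $V(I)\subset V(I_d)$ requires the valuated circuit polynomials, not the set-theoretic ones.
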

To avoid confusion in the following proof, we note that our convention of $H_I(d)$ and that used in the paper \cite{DR19} differ. In this paper $H_I(d)=\rk(I_d)$, whereas in \cite{DR19} the convention $H_I(d)=\rk(I_{\leq d})$ is used.
\begin{proof}
Let $I$ be a linear tropical ideal with $\rk(I_1)=r$ and let $M$ be the underlying matroid $\uMat(I_1)$. Then by definition $V(I)=V(I_1)$ and $H_I(1)=\rk(I_1)=r$. By \cite{DR19}*{Proposition 5.5} we can immediately conclude that
$$H_I(d)\leq \binom{r+d-1}{d}.$$

To impose a lower bound, we notice that $\pi_A(V(I))=\T^A$ if and only if $\pi_A(B(M))=\T^A$. In particular, the independence complex $\sI(I_1)$ is equal to that of $B(M)$. \cite{Yu17}*{Lemma 3} then provides that $\sI(I_1)$ is the collection of independent sets of $M$, and therefore $\sI(I_1)$ contains a set of size $r$. By \cite{DR19}*{Proposition 5.4} we may conclude that
$$H_I(d)\geq \binom{r+d-1}{d}.$$ 
Combining these two inequalities provides that $H_I(d)= \binom{r+d-1}{d}$, as desired.

Conversely, suppose that $I$ has Hilbert function exactly $H_I(d)=\binom{r+d-1}{d}$. Then $I_1$ is a valuated matroid of rank $H_I(1)=r$ and $V(I_1)$ is a tropical linear space of dimension $r-1$. Moreover, the dimension of $V(I)$ is equal to the degree of the Hilbert polynomial of $H_I(d)$ \cite{MR20}*{Theorem 4.3}, which is a degree $r-1$ polynomial. We now note that $V(I)$ is a tropical subvariety of $V(I_1)$ which, by \coloref{cor:minkWeights} contains no proper tropical subvariety of dimension $r-1$. Thus we must have $V(I)=V(I_1)$, and therefore $I$ is linear.
\end{proof}

\begin{corollary}\label{cor:LTIsaturated}
If $I\subset \T[x_1,\dots,x_n]$ is a linear tropical ideal then

$$I = (\T[x_1^{\pm1},\dots,x_n^{\pm1}]\cdot I )\cap \T[x_1,\dots,x_n].$$
In other words, $I$ is saturated with respect to $\prod_{i=1}^n x_i$. 
\end{corollary}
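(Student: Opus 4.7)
The plan is to deduce saturation from a rigidity principle: two valuated matroids on the same ground set of the same rank that are comparable by inclusion must be equal. Granted this, it suffices to show that the saturated ideal $J := \set{p \in \T[x_1,\dots,x_n] : (x_1\cdots x_n)^N p \in I \text{ for some } N \geq 0}$ coincides with $I$. One may first strip the loops of $I_1$ and reduce to the loop-free case, where the open locus $V(I) \cap \R^n$ is dense in $V(I)$.

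First I would verify that $J$ is itself a tropical ideal. For each $N$ the degree-$d$ piece $\set{p \in \T[x]_d : (x_1\cdots x_n)^N p \in I_{d+nN}}$ is identified, via the bijection $m \leftrightarrow (x_1\cdots x_n)^N m$, with the restriction $I_{d+nN}|_{(x_1\cdots x_n)^N \Mon_d(n)}$, so it is a valuated matroid. These restrictions form an ascending chain in $N$ whose ranks are bounded by $|\Mon_d(n)|$, and the rigidity lemma then forces the chain to stabilize, so $J_d$ equals its stable member and is a valuated matroid. The ideal and $\oplus$-closure properties of $J$ are routine. Next I would check that $V(J) = V(I)$: the inclusion $V(J) \subseteq V(I)$ is immediate from $I \subseteq J$, and conversely any $p \in J$ satisfies $(x_1 \cdots x_n)^N p \in I$, so
$$V((x_1\cdots x_n)^N p) \;=\; \bigcup_i \set{y_i = \infty} \cup V(p) \;\supseteq\; V(I),$$
forcing $V(p)$ to contain $V(I) \cap \R^n$ and hence, by closedness, all of $V(I)$.

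From $V(J) = V(I) = V(I_1)$ together with $I_1 \subseteq J_1 \subseteq J$ it follows that $V(J_1) = V(J)$, so $J$ is itself linear. Applying the rigidity lemma to $I_1 \subseteq J_1$, whose tropical linear spaces agree and whose ranks therefore agree, gives $J_1 = I_1$. Then \coloref{prop:linearHFequiv} yields $\rk(J_d) = \binom{r+d-1}{d} = \rk(I_d)$ in every degree, where $r := \rk(I_1)$. A second application of the rigidity lemma to each chain $I_d \subseteq J_d$ of equal-rank valuated matroids now forces $I_d = J_d$ in every degree, so $I = J$ and $I$ is saturated.

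The key technical input is the rigidity lemma itself: if $V_1 \subseteq V_2 \subseteq \T^E$ are valuated matroids of the same rank $r$, then $V_1 = V_2$. I would prove this by passing to tropical linear spaces: containment of subsemimodules gives $\trop(V_1) \supseteq \trop(V_2)$, both being tropical manifolds of dimension $r-1$, and \coloref{cor:minkWeights} together with its immediate consequence that connected tropical manifolds admit no proper tropical subvariety of equal dimension forces these tropical linear spaces to be equal; the standard fact that a tropical linear space uniquely determines its valuated matroid then yields $V_1 = V_2$. The main obstacle I anticipate is stating this rigidity lemma cleanly and handling the loop reduction so that the density step in $V(I) \subseteq V(J)$ genuinely applies; the rest of the argument is a bookkeeping of restrictions, \coloref{prop:linearHFequiv}, and successive invocations of rigidity.
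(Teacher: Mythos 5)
Your argument is correct in substance, but it is organized quite differently from the paper's. The paper argues by contradiction at the first non-saturated degree: a single violating element $f=x^\omega g\in I_d$ with $g\notin I_{d-i}$ produces a strict containment of valuated matroids $x^\omega\cdot I_{d-i}\subsetneq I_d|_{x^\omega}$, which forces a rank drop and hence $V(I_d|_{x^\omega})\subsetneq V(I_1)$, contradicting $V(I)=V(I_1)$. You instead construct the full saturation $J$, verify that it is again a tropical ideal, show it is linear with the same degree-one part, match Hilbert functions via \coloref{prop:linearHFequiv}, and then collapse $I_d\subseteq J_d$ degree by degree with your rigidity lemma. Both proofs ultimately rest on the same geometric engine --- \coloref{cor:minkWeights} and the fact that a connected tropical manifold has no proper tropical subvariety of equal dimension, plus the fact that a tropical linear space determines its valuated matroid --- so the key input is shared; what your route buys is the (correct, and independently useful) observation that the monomial saturation of a tropical ideal is again a tropical ideal, at the cost of the extra stabilization argument, which the paper's local contradiction avoids entirely.

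Two caveats. First, the opening step ``strip the loops of $I_1$ and reduce to the loop-free case'' is not a reduction that can be made to work: when $I_1$ has a loop the statement itself fails. For instance $I=\trop(\langle x_2\rangle)\subset\T[x_1,x_2]$ is linear in the paper's sense, since $V(I)=V(I_1)=\{x\in\T^2: x_2=\infty\}$, yet $x_1^2x_2\in I$ while $x_1\notin I$, and the constant $0$ lies in $(\T[x_1^{\pm1},x_2^{\pm1}]\cdot I)\cap\T[x_1,x_2]$. So loop-freeness of $I_1$ (equivalently, that $I$ contains no monomials, i.e.\ $V(I)\cap\R^n\neq\emptyset$) must be a standing hypothesis rather than something reduced to; this is exactly where your density step, and also the corresponding step $V(I_d|_{x^\omega})\subsetneq V(I_1)$ in the paper's own proof, use that the variety is not confined to the boundary of $\T^n$, where multiplying by monomials changes varieties. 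Second, in the stabilization argument, boundedness of the ranks alone does not make the ascending chain $J_d^{(N)}$ stabilize: you also need that rank is monotone along containment of valuated matroids. This does follow from your own dimension comparison (containment of the vector sets reverses containment of the tropical linear spaces, whose dimensions are the ranks), after which monotonicity, boundedness, and the rigidity lemma together give stabilization; the sentence as written skips this point.
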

\begin{proof}
Let $I$ be a linear tropical ideal, but suppose that towards a contradiction that $I$ is not saturated. Let $I_d$ be the minimal degree that is not saturated and let $f= x^\omega \cdot g\in I_d$ where $x^\omega$ is a degree $i$ monomial, $g$ is not divisible by any monomial, and $g\notin I_{d-i}$. This implies $x^\omega\cdot I_{d-i}\subsetneq I_d|_{x^\omega}$ are nested tropical linear spaces. In particular $I_d|_{x^\omega}$ is not full rank and therefore $$V(I_d|_{x^\omega})\subsetneq V(I_1).$$
However this immediately implies $V(I)\subsetneq V(I_1)$, a contradiction. Thus $I$ must be saturated.
\end{proof}

\begin{remark}\label{rem:bfCharacterisation}
The proofs of \cite{DR19}*{Proposition 5.4, Proposition 5.5}, which are used to prove \coloref{prop:linearHFequiv}, are made at the level of tropical linear spaces, and not tropical ideals. In particular, they can be restricted to any $d\th$ symmetric power of a matroid. Indeed, we may conclude $(M_1,\dots,M_d)$ is a symmetric power of $M$ if and only if $V(M_d)= \trop(M)$. 
\end{remark}

\begin{corollary}\label{cor:bfCharacterisation}
\coloritemref{prop:symprodCharacterisations}{bases} and \coloritemref{prop:symprodCharacterisations}{bf} are equivalent.
\end{corollary}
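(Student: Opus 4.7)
The plan is to adapt the Hilbert-function bounds \cite{DR19}*{Propositions 5.4 and 5.5} to the symmetric-quasi-power setting, as indicated by \coloref{rem:bfCharacterisation}. Both proofs in \cite{DR19} operate at the level of tropical linear spaces and the compatibility between consecutive graded pieces, which is exactly the data captured by the sequence $(M_1,\ldots,M_d)$. Since the equivalence between \coloritemref{prop:symprodCharacterisations}{rk} and \coloritemref{prop:symprodCharacterisations}{bases} is \coloref{prop:masonFullRank} and the equivalence between \coloritemref{prop:symprodCharacterisations}{bf} and \coloritemref{prop:symprodCharacterisations}{flats} has just been shown in the text, it suffices to prove \coloritemref{prop:symprodCharacterisations}{bases} $\Leftrightarrow$ \coloritemref{prop:symprodCharacterisations}{bf}.

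The containment $V(M_d) \subseteq \trop(M)$ holds unconditionally. For any circuit $C$ of $M$ and any non-loop monomial $x^\alpha \in \Sym_{d-1}(E)$, the embedding axiom \coloref{eqn:symprodEmbedding} promotes $x^\alpha \cdot C$ to a circuit of $M_d$, and at any $\omega \in V(M_d) \cap \R^n$ the polynomial $x^\alpha \cdot p_C$ attains its tropical minimum at least twice. Since $x^\alpha(\omega) \in \R$ is a finite scalar, the same holds for $p_C$, placing $\omega \in V(p_C)$; intersecting over $C \in \sC(M)$ and passing to closures yields the inclusion.

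For the direction \coloritemref{prop:symprodCharacterisations}{bf} $\Rightarrow$ \coloritemref{prop:symprodCharacterisations}{rk}: assuming $V(M_d) = \trop(M)$, the coordinate projection of $V(M_d)$ onto any basis $B$ of $M$ coincides with $\pi_B(\trop(M)) = \T^B$, so $B$ lies in the independence complex used by the adapted \cite{DR19}*{Proposition 5.4}. That proposition then furnishes $\binom{r+d-1}{d}$ independent monomials in $M_d$ --- in this case realized as $\Sym_d(B)$ itself --- giving the lower bound $\rk(M_d) \geq \binom{r+d-1}{d}$. Mason's upper bound (\coloref{prop:masonFullRank}, playing the role of the adapted \cite{DR19}*{Proposition 5.5}) matches this, delivering \coloritemref{prop:symprodCharacterisations}{rk}.

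For the converse \coloritemref{prop:symprodCharacterisations}{rk} $\Rightarrow$ \coloritemref{prop:symprodCharacterisations}{bf}, I would argue by contradiction. Assume $\rk(M_d) = \binom{r+d-1}{d}$ but $V(M_d) \subsetneq \trop(M)$. The argument proving \coloritemref{prop:symprodCharacterisations}{bf} $\Leftrightarrow$ \coloritemref{prop:symprodCharacterisations}{flats} then produces $\omega \in \trop(M)$ together with a circuit $C$ of $M_d$ whose polynomial $p_C$ attains its minimum at $\omega$ uniquely at some monomial $x_0$. Translating $\omega$ within the lineality $\R\mathbbm{1}$ of $\trop(M)$ places $x_0(\omega) = 0$ as the strict minimum, and a flag-of-flats analysis localises $x_0$ outside a proper flat of $M$ distinguished by $\omega$, while the remaining elements of $C$ each use at least one variable from that flat. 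One then leverages \coloritemref{prop:symprodCharacterisations}{bases} --- that $\Sym_d(B)$ is independent for every basis $B$ of $M$ --- to force $C$ into $\Sym_d(B)$ for a suitably adapted $B$, contradicting independence. The principal obstacle is making this final ``adapted $B$'' step rigorous: it rests on a careful combinatorial comparison between the flag-of-flats description of top-dimensional cones of $\trop(M)$ and the supports of monomials in $\Sym_d(E)$, and I expect to handle it by choosing $B$ to extend a basis of the $\omega$-distinguished flat together with a basis of its complement containing the support of $x_0$.
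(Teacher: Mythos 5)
Your first two steps are sound and essentially what the paper intends: the unconditional containment $V(M_d)\subseteq \trop(M)$ via the circuits $x^{\alpha}\cdot C$ supplied by \coloref{eqn:symprodEmbedding}, and the implication \coloritemref{prop:symprodCharacterisations}{bf} $\Rightarrow$ \coloritemref{prop:symprodCharacterisations}{bases} by the restricted \cite{DR19}*{Proposition 5.4} argument (no circuit of $M_d$ can be supported on $\Sym_d(B)$ when $\pi_B(V(M_d))=\T^B$) together with Mason's upper bound. That direction matches the route the paper takes through \coloref{rem:bfCharacterisation}.

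The gap is in the converse, \coloritemref{prop:symprodCharacterisations}{rk} $\Rightarrow$ \coloritemref{prop:symprodCharacterisations}{bf}, which is the substantive content of the corollary. Your plan is to take $\omega\in\trop(M)$ and a circuit $C$ of $M_d$ with unique minimum at $x_0$ and then ``force $C$ into $\Sym_d(B)$'' for an adapted basis $B$; but a circuit witnessing failure of \coloritemref{prop:symprodCharacterisations}{bf} at a point of $B(M)$ need not lie inside $\Sym_d(B)$ for \emph{any} basis $B$, so no choice of $B$ can make this step work. Concretely, let $M=U_{2,3}$ and let $M_2$ be the rank-$2$ quasi power on $\Sym_2(\set{1,2,3})$ whose parallel classes are $\set{x_1^2,x_2x_3}$, $\set{x_2^2,x_1x_3}$, $\set{x_3^2,x_1x_2}$; each restriction $M_2|_{x_i}$ is $U_{2,3}$, so this is a genuine symmetric quasi power. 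At $\omega=(0,0,1)\in B(M)$ the circuit $\set{x_1^2,x_2x_3}$ attains its minimum uniquely at $x_1^2$, yet it is contained in no $\Sym_2(B)$; the dependence of the sets $\Sym_2(B)$ is witnessed only by \emph{other} circuits, produced by elimination. (The rank here is not full, so this is no counterexample to the corollary, but it shows that the inference ``violating circuit at $\omega$ implies $C\subseteq\Sym_d(B)$'' is invalid unless the full-rank hypothesis enters that step in an essential way, and your sketch never uses it there.) Your flag-of-flats picture is also reversed: since $x_0$ carries the strict minimum, it concentrates on the small (low-$\omega$) flats, while the other monomials of $C$ are the ones reaching outside them, as the example shows with $x_0=x_1^2$ supported in the flat $\set{1,2}$. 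To close this direction you need the elimination/initial-degeneration machinery of the \cite{DR19} proofs restricted to a single graded piece --- which is exactly what \coloref{rem:bfCharacterisation} appeals to --- or an inductive argument on $d$ via the quasi-power embeddings; as written, the decisive step you yourself flag as the ``principal obstacle'' is not only unproven but, in its stated form, unworkable.
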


\begin{corollary}\label{cor:hfSymprodEquiv}
Let $I\subset \T[x_1,\dots,x_n]$ be a tropical ideal. Then $I$ is linear if and only if, for every $d\in \N$, $(I_1,\dots,I_d)$ is a $d\th$ symmetric power of $I_1$. 
\end{corollary}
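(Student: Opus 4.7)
My plan is to combine Proposition~\ref{prop:linearHFequiv} (the Hilbert function characterization of linear tropical ideals) with Corollary~\ref{cor:LTIsaturated} (linear tropical ideals are saturated) and the rank condition built into the definition of symmetric power. Both directions reduce to short checks that match existing lemmas.

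For the forward direction, I assume $I$ is linear. The first task is to verify that $(I_1,\dots,I_d)$ satisfies the quasi-power axioms of \coloref{def:symprod}. By \coloref{cor:LTIsaturated}, $I$ is saturated with respect to $\prod_i x_i$, so for any monomial $x^\alpha\in\Mon_{d-i}(n)$ whose support avoids loops of $I_1$, saturation identifies $I_d|_{x^\alpha}$ with $x^\alpha\cdot I_i$, and the multiplication map $*_{x^\alpha}$ is a monomial-labeled isomorphism of valuated matroids realizing \coloref{eqn:symprodEmbedding}. If instead $x^\alpha$ is divisible by a loop $x_e$ of $I_1$ (so $x_e\in I$), then every monomial divisible by $x^\alpha$ lies in $I_d$ individually, forcing $\rk(I_d|_{x^\alpha})=0$ as required by \coloref{eqn:symprodLoops}. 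Once the quasi-power structure is in place, the rank condition \coloref{eqn:symprodRank} is immediate from \coloref{prop:linearHFequiv}: $\rk(I_d)=H_I(d)=\binom{r+d-1}{d}$ with $r=\rk(I_1)$. Hence $(I_1,\dots,I_d)$ is a $d$th symmetric power of $I_1$ for every $d$.

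For the converse, suppose $(I_1,\dots,I_d)$ is a $d$th symmetric power of $I_1$ for every $d\in\N$. By definition this already forces $\rk(I_d)=\binom{\rk(I_1)+d-1}{d}$, i.e. the Hilbert function of $I$ equals $\binom{r+d-1}{d}$ at every degree, where $r=\rk(I_1)$. Applying \coloref{prop:linearHFequiv} in the reverse direction then yields that $I$ is linear, completing the equivalence.

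The only non-cosmetic step is the reduction $I_d|_{x^\alpha}=x^\alpha\cdot I_i$ used in establishing the quasi-power structure; this requires saturation exactly in the form produced by \coloref{cor:LTIsaturated}, and it is here that the topological input from \coloref{cor:minkWeights} (used in the proof of \coloref{prop:linearHFequiv}) enters indirectly. The distinction between ``quasi-power'' and ``symmetric power'' must also be tracked carefully, since the definitions force them to coincide only once the rank bound is met; this is why both conclusions of \coloref{prop:linearHFequiv} are invoked.
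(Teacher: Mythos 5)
Your forward direction takes the same route as the paper (\coloref{cor:LTIsaturated} plus \coloref{prop:linearHFequiv}), just with the saturation argument for the quasi-power axioms spelled out explicitly where the paper compresses it to a single sentence; your expansion is correct, including the observation that saturation with respect to $\prod_i x_i$ implies saturation with respect to each monomial, so $I_d|_{x^\alpha}=x^\alpha\cdot I_{d-\deg\alpha}$ for loop-avoiding $\alpha$, and that any $\alpha$ divisible by a loop forces every monomial in $\Sym_d(E)|_\alpha$ to lie in $I$ individually, giving $\rk(I_d|_\alpha)=0$.

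Your converse, however, is a genuinely different route from the paper's. The paper argues geometrically: since $(I_1,\dots,I_d)$ is a symmetric power, \coloref{prop:symprodCharacterisations} gives $V(I_d)=V(I_1)$ for every $d$, so $V(I)=\bigcap_d V(I_d)=V(I_1)$, i.e.\ $I$ is linear by definition. You instead observe that \coloref{eqn:symprodRank} in the definition of symmetric power directly pins down $H_I(d)=\rk(I_d)=\binom{r+d-1}{d}$ for all $d$, and then invoke the converse direction of \coloref{prop:linearHFequiv}. Both ultimately lean on the topological input of \coloref{cor:minkWeights} (the paper via \coloref{cor:bfCharacterisation} inside \coloref{prop:symprodCharacterisations}, you via the converse of \coloref{prop:linearHFequiv}), but your version is more economical: it avoids the geometric decomposition $V(I)=\bigcap_d V(I_d)$ and the variety-matching step of \coloref{prop:symprodCharacterisations}, and makes the corollary a near-immediate consequence of \coloref{prop:linearHFequiv} alone. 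The paper's version is more illustrative of the geometry, exhibiting the linearity of $V(I)$ degree by degree. Both are correct.
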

\begin{proof}
    \coloref{cor:LTIsaturated} and \coloref{prop:linearHFequiv} imply that if $I\subset \T[x_1,\dots,x_n]$ is a linear tropical ideal then the sequences $(I_1,\dots,I_d)$ are $d\th$ symmetric powers of $I_1$. Moreover, if $I$ is a tropical ideal such that $(I_1,\dots,I_d)$ are $d\th$ symmetric powers of $I_1$, then 
    $$V(I) = \bigcap_{d=1}^\infty V(I_d),$$
    where, by \coloref{prop:symprodCharacterisations}, we have $V(I_d)=V(I_1)$. We conclude that
    $$V(I)=\bigcap_{d=1}^\infty V(I_1) = V(I_1),$$
    so $I$ is linear.
\end{proof}

We will use these result to understand how matroid symmetric powers tie in to the theory tropical ideals in a very natural way.

\subsection{Tropically Realizable Matroids}\label{subsec:TropRealizableMatroids}

Given a linear tropical ideal $I$, the data of its variety is encoded entirely in $I_1$, which is itself a valuated matroid, and we have seen that $I$ necessarily encodes infinitely large symmetric powers of $I_1$. We will now prove that these are in fact equivalent theories.

Recall that a tropical variety $X$ is said to be tropically realizable if there is a tropical ideal $I$ for which $V(I)= X$. We extended this terminology to matroids by saying a valuated matroid $\sV$ on $[n]$ is tropically realizable if there exists a linear tropical ideal $I$ in $\T[x_1,\dots,x_n]$ for which $V(I)=\trop(\sV)$, where $\trop(\sV)$ is the tropical linear space of $\sV$.

\begin{theorem}\label{thm:linearMatroidCorres}
Let $\sV$ be a valuated matroid. Then $\sV$ is tropically realizable if and only if for every $d$ some symmetric power $(\sV_1,\dots,\sV_d)$ exists.
\end{theorem}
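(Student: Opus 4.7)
The plan is to prove each direction separately. The forward direction is essentially immediate from the earlier \coloref{cor:hfSymprodEquiv}. The reverse direction requires a compactness argument to patch the individual symmetric powers at each degree into a single coherent sequence that assembles into a linear tropical ideal.

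For the forward direction, suppose $\sV$ is tropically realizable, with witnessing linear tropical ideal $I\subset \T[x_1,\dots,x_n]$ satisfying $V(I)=\trop(\sV)$. By \coloref{cor:hfSymprodEquiv}, for every $d$ the truncation $(I_1,\dots,I_d)$ is a $d\th$ symmetric power of $I_1$. I would then identify $I_1=\sV$: both are valuated matroids whose tropical linear spaces equal $V(I)=\trop(\sV)$, and a valuated matroid is determined up to projective scaling by its tropical linear space as a weighted polyhedral complex. Thus $(I_1,\dots,I_d)$ is a $d\th$ symmetric power of $\sV$ for every $d$.

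For the reverse direction, assume $\Dr_d(\sV)\neq \emptyset$ for every $d$. The goal is to construct a coherent sequence $(\sV_d)_{d\geq 1}$ with $\sV_1=\sV$ such that each $\sV_d$ restricts to $\sV_{d-1}$ under truncation. By \coloref{rem:truncation}, truncation defines a map $\tau_d\colon \Dr_d(\sV)\to \Dr_{d-1}(\sV)$, which is continuous by \coloref{prop:plConvergence}. Since each $\Dr_d(\sV)$ is a non-empty compact polyhedral complex by \coloref{prop:closedcompact}, Tychonoff's theorem applied to $\prod_d \Dr_d(\sV)$ together with the finite intersection property for the closed coherence conditions $\tau_d(\sV_d)=\sV_{d-1}$ yields a non-empty inverse limit. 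Any element of this limit is the desired compatible sequence.

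Given such a sequence, define $I\subset \T[x_1,\dots,x_n]$ by taking $I_d:=\sV_d$, regarded as a valuated matroid on $\Mon_d(n)$, for every $d$. Each graded piece is a valuated matroid by construction, and the multiplicative embedding \colorefiso{eqn:symprodEmbedding} ensures $x^\alpha\ttimes \sV_d\subset \sV_{d+|\alpha|}$ for every monomial $x^\alpha$, making $I$ a homogeneous tropical ideal. By \coloref{prop:symprodCharacterisations} and \coloref{rem:bfCharacterisation}, $V(\sV_d)=\trop(\sV)$ for every $d$, hence $V(I)=\bigcap_d V(I_d)=\trop(\sV)=V(I_1)$, so $I$ is linear and witnesses the tropical realizability of $\sV$. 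The main obstacle is precisely the inverse limit step: the hypothesis supplies only separate existence of symmetric powers at each degree, with no a priori compatibility between different degrees, and \coloref{prop:closedcompact} is what makes it possible to pass from degree-by-degree existence to a single coherent graded sequence. Everything else---closure of $I$ under the ideal structure and the computation $V(I)=\trop(\sV)$---reduces to facts already established in earlier sections.
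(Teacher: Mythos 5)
Your proof is correct and rests on the same two pillars as the paper's: compactness of $\Dr_d(\sV)$ from \coloref{prop:closedcompact}, and continuity of restriction from \coloref{prop:plConvergence}. The only substantive difference is the packaging of the reverse direction. The paper carries out the compactness argument by hand: it recursively selects an accumulation point $\sV_d$ in the multiset of degree-$d$ slices $S_{d-1,d}$, refines the witnessing family to a subfamily $S_d$ for which $\sV_d$ is the unique accumulation point, and invokes \coloref{prop:plConvergence} on a convergent subsequence to verify that $\sV_d|_e = \sV_{d-1}$. You replace this recursive construction with the standard fact that an inverse limit of non-empty compact Hausdorff spaces along continuous bonding maps is non-empty, obtained here from Tychonoff applied to $\prod_d \Dr_d(\sV)$ and the finite intersection property of the closed coherence conditions $\tau_d(x_d)=x_{d-1}$ (the FIP is supplied exactly by the hypothesis that a $D\th$ symmetric power exists for each finite $D$). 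This is a cleaner and arguably more transparent route to the same conclusion, and it makes the role of compactness explicit rather than implicit in a diagonal-style subsequence extraction. Two small points worth keeping in mind: continuity of the truncation map uses sequential continuity plus metrizability of $\TP^{\binom{n_d}{r_d}-1}$, and \coloref{prop:plConvergence} requires the restriction to have positive rank, so (as the paper notes) the rank-zero case of $\sV$ should be disposed of separately. Your forward direction also makes explicit the identification $I_1 = \sV$ (via the fact that a valuated matroid is determined up to projective scaling by its tropical linear space), which the paper leaves implicit in its definition of tropically realizable matroid; your final step invokes \coloref{prop:symprodCharacterisations} directly in place of the paper's detour through Hilbert functions and \coloref{prop:linearHFequiv}, but the content is the same.
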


\begin{proof}
Since \coloref{prop:plConvergence} requires $\sV$ to have positive rank we note that if $\sV$ is rank $0$ the proof is trivial as rank $0$ valuated matroids have unique symmetric powers. Thus we may assume $\sV$ has positive rank in the remainder of the proof.

Let $\sV$ be a tropically realizable matroid. Then there exists a tropical ideal $I\subset \T[x_1,\dots, x_n]$ with $V(I)=\trop(\sV)$ and $H_I(d)=\binom{\rk(I_1)+d-1}{d}$. \coloref{cor:hfSymprodEquiv} now provides that $(I_1,\dots,I_d)$ is a $d\th$ symmetric power of $I_1=\sV$ for all $d\in \N$. This proves the implication.

For the converse, suppose that we have an infinite collection of symmetric powers of $\sV$ that become arbitrarily large, say 
$$S = \set{(\sV=\sV_{1,d},\sV_{2,d},\dots, \sV_{d,d})}_{d=1}^\infty.$$
We claim that the existence of $S$ implies that there is an infinite sequence $J$ of symmetric powers of $\sV$
$$J = (\sV, \sV_2, \dots, \sV_d, \dots)$$
where now the matroids $\sV_i$ are fixed between all $d\th$ symmetric powers given in $J$. Given that $J$ exists we will see that $J$ defines a tropical ideal via the circuit polynomials of each $\sV_d$. Thus we construct $J$ given $S$.

Let $S_j$ be any subset of $S$. We denote by $S_{j,d}$ the multiset of Pl\"ucker vectors
$$\set{\rho_d\in \Dr_d(\sV): (\rho_1,\rho_2,\dots, \rho_d,\dots, \rho_k)\in S_j}.$$

Let $\sV_1 = \sV$ and $S_1=S$. We recursively define the valuated matroid $\sV_d$ and the subset $S_d\subset S$ as follows. Let $\sV_d$ be an accumulation point or any point of infinite multiplicity in $S_{d-1,d}$. Let $S_d$ be a subset of $S_{d-1}$ for which $\sV_d$ is the unique accumulation point or point of infinite multiplicity in $S_{d,d}$. We claim that for every $d\in \N$ that $\sV_d$ and $S_d$ exist and that $(\sV_1,\dots \sV_d)$ is a $d\th$ symmetric power of $\sV$.

\coloref{prop:closedcompact} provides that since $S_{d-1,d}$ is an infinite multiset contained in $\Dr_{d-1}(\sV)$ that $S_{d-1,d}$ must have an accumulation point or a point of infinite multiplicity. Fix $\sV_d$ to be any such point. By construction of $\sV_d$, there is necessarily an infinite subset $S_d\subset S_{d-1}$ such that $\sV_d$ is the unique accumulation point in $S_{d,d}$. 

We now claim that $(\sV_1,\dots,\sV_d)$ is a $d\th$ symmetric power of $\sV$. Let 
$$T:=\set{\rho_{d,i}}_{i=1}^\infty$$
be a convergent sequence of elements of $S_{d,d}$. By construction $T$ necessarily converges to $\sV_d$. \coloref{prop:plConvergence} then implies that for any $e \in E(\sV)$ that the sequence
$$T|_e:=\set{\rho_{d,i}|_{e}}_{i=1}^\infty$$
converges to $\sV_d|_e$. Since each $\rho_{d,i}$ is a $d\th$ symmetric power of $\sV$, \coloref{eqn:symprodEmbedding} provides that we have an equality of Pl\"ucker vectors. $\rho_{d,i}|_{e}=\rho_{d-1,i}$.
In particular, the sequence
$$T|_e = \set{\rho_{d-1,i}}_{i=1}^\infty\subset S_{d-1,d-1}.$$
By construction, any convergent sequence in $S_{d-1,d-1}$ necessarily converges to $\sV_{d-1}$. Thus $\sV_d|_e=\sV_{d-1}$. This implies that $(\sV_1,\dots,\sV_{d-1},\sV_d)$ is a symmetric power of $\sV_1=\sV$ if and only if $(\sV_1,\dots,\sV_{d-1})$ is a symmetric power. In the case where $d=2$ this follows immediately, and therefore holds for all $d$, as desired. We conclude that the infinite sequence $J=(\sV_1,\dots,\sV_d, \dots)$ exists.

We claim that for any sequence $J$ obtained by the above construction for any choice of accumulation points or points of infinite multiplicity in each step, that $J$ provides a linear tropical ideal. Consider that \coloref{eqn:symprodEmbedding} provides that the polynomials associated to each $\sV_d$ defined by $J$ form a saturated tropical ideal $I\subset \T[x_1,\dots,x_n]$ with Hilbert function $\binom{\rk(\sV)+d-1}{d}$. \coloref{prop:linearHFequiv} now provides that $I$ is a linear tropical ideal with $V(I)=V(I_1)=\trop(\sV)$, as desired.
\end{proof}

\begin{remark}
It remains an open problem whether or not there is a tropical linear space that is tropically realizable but not realizable over a field. 
\end{remark}

\subsection{Minor Closed}\label{subsec:minorsLinTropIdeals}
    In this section we will show how tropical geometry allows us to easily prove that the class of tropically realizable matroids is minor closed and that the class of matroids with $d\th$ symmetric power is minor closed.
    
\begin{theorem}\label{thm:minorClosed}
The class of tropically realizable matroids is minor closed. Specifically, given a tropical realization $I\subset \T[x_1,\dots,x_n]$ of the valuated matroid $\sV$, the contraction $\sV\contr n$ is tropically realized by the specialization $I_{x_n=\infty}$, and the deletion $\sV\sm n$ is tropically realized by $I\cap\T[x_1,\dots,x_{n-1}]$.
\end{theorem}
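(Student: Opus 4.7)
The approach is to verify directly that both $J := I \cap \T[x_1,\dots,x_{n-1}]$ and $K := I_{x_n = \infty}$ are linear tropical ideals whose varieties are $\trop(\sV \sm n)$ and $\trop(\sV \contr n)$ respectively. Single-element minors at other positions follow by relabeling coordinates, and iterating handles arbitrary minors.

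First I would check that $J$ and $K$ are tropical ideals by analyzing their graded pieces. The piece $J_d$ is the restriction of the valuated matroid $I_d$ to the sub-semimodule of polynomials supported on monomials not containing $x_n$ (a matroid deletion at the monomial level), while $K_d$ is the image of $I_d$ under the tropical-linear projection of the monomial ground set that kills every $x_n$-containing monomial (a matroid contraction). Both deletion and contraction preserve valuated matroid structure, so both ideals are tropical ideals.

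Next I would identify the varieties with the appropriate tropical linear spaces. Since tropical elimination projects the variety, $V(J) = \pi_{\hat n}(V(I)) = \pi_{\hat n}(\trop(\sV)) = \trop(\sV \sm n)$, using the standard description of matroid deletion as coordinate projection of a tropical linear space. Since tropical substitution slices the variety, $V(K) = \{y \in \T^{n-1} : (y,\infty) \in V(I)\} = \trop(\sV \contr n)$, using the corresponding description of matroid contraction. To conclude linearity, I would identify the degree-$1$ parts with the desired minors in the vector cryptomorphism: $J_1$ consists of the vectors of $\sV$ with $v_n = \infty$ projected to $\T^{n-1}$, which is exactly $\sV \sm n$, and $K_1$ is the image of $\sV$ under the coordinate projection to the first $n-1$ positions, which is exactly $\sV \contr n$. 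Therefore $V(J_1) = \trop(\sV \sm n) = V(J)$ and $V(K_1) = \trop(\sV \contr n) = V(K)$, so both are linear tropical ideals realizing the claimed minors.

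The main obstacle is in the first step, ensuring the substitution operation preserves the valuated matroid axioms at each graded piece, since image-type projections of semimodules do not in general preserve matroid structure without care. A clean alternative to the final matroid identification is to invoke Corollary \ref{cor:minkWeights}, which implies a tropical linear space has no proper tropical subvariety of the same dimension; this reduces the equality $V(J_1) = V(J)$ (and similarly for $K$) to a dimension count once the containment $V(J) \subseteq V(J_1)$ has been established.
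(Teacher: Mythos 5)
Your proposal matches the paper's proof in structure: you pick the same pair of ideals $J = I \cap \T[x_1,\dots,x_{n-1}]$ and $K = I_{x_n=\infty}$, identify $J_1 = \sV \sm n$ and $K_1 = \sV \contr n$ by construction, and compute the varieties by tropical elimination ($V(J) = \pi_{[n-1]}(V(I))$, which the paper cites from Maclagan--Rinc\'on) and by substitution ($V(K) = \{y : (y,\infty) \in V(I)\}$, which the paper establishes via two inclusions rather than asserting outright). The concern you flag --- that specialization might not preserve the valuated-matroid structure of each graded piece --- is real but is not something the paper re-proves; it relies on the results of Maclagan--Rinc\'on, cited in the introduction, that coordinate projections and specializations of tropical ideals are again tropical ideals, so you should invoke that literature rather than leave the point dangling as a main obstacle. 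Your proposed alternative closing step via \coloref{cor:minkWeights} is a genuine variant and would work (indeed it is the engine behind \coloref{prop:linearHFequiv}), but the paper prefers the direct identification $V(J) = V(J_1)$ here because the elimination and substitution identities already hand over the equality of varieties without any dimension count.
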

\begin{proof}
Without loss of generality, consider the ideal $J:= I|_{x_n=\infty}$. We claim $J$ tropically realizes $\sV\contr n$. We first prove that $V(J) = V(J_1)=\trop(\sV\contr n)$. Note that
$$V(J)= \pi_{[n-1]}(V(I)\cap \set{x: x_n = \infty}) = \pi_{[n-1]}(\trop(\sV)\cap\set{x: x_n=\infty}).$$
To see that $V(J)\subset \trop(\sV\contr n)$ it is sufficient to notice that $J_1 = \sV\contr n$. This holds as $I_1=\sV$ and $J_1=\sV\contr n$ by construction. Therefore $V(J)\subset V(J_1)\subset \trop(\sV\contr n)$. 

Now let $\omega\in \trop(\sV\contr n)$. We must show that every polynomial in $J$ attains its minimum twice on $\omega$. Note that $\omega\in \trop(\sV\contr n)$ if and only if $\omega'=(\omega,\infty)$ is in $\trop(\sV)=V(I)$. We now show that this implies $\omega\in V(J)$. Consider that every polynomial $g$ of $J$ is of the form $f|_{x_n=\infty}$ for some polynomial $f\in I$. In particular $f\in I$ attains its minimum twice or equals $\infty$ at $\omega'$ if and only if $g$ attains its minimum twice or equals $\infty$ at $\omega$. Since $\omega'\in V(I)$ it follows that $\omega\in V(J)$, as desired. 

Since $J_1=\sV/n$ and $V(J)=V(J_1)$, $J$ is linear and is a tropical representation of $\sV/n$, as desired.

As for the matroid $\sV\sm n$, consider the tropical ideal $J := I\cap \T[x_1,\dots,x_{n-1}]$. By \cite{MR20}*{Theorem 4.7} we have $V(J)=\pi_{[n-1]}(\trop(\sV))$, which is equal to $\trop(\sV\sm n)$. By construction $J_1$ is equal to $\sV\sm n$. Therefore $J$ tropically realizes $\sV\sm n$.
\end{proof}

\begin{lemma}\label{lem:geometricDeletion}
    If $\sV$ is a valuated matroid on $[n]$ and $\sV_d$ is a $d\th$ symmetric power of $\sV$. Then the valuated matroid
    $$P:=\sV_d\sm(n\cdot \Sym_{d-1}([n]))$$
    is a $d\th$ symmetric quasi power of $\sV\sm n$ and satisfies
    $$V(P) = \pi_{[n-1]}(\trop(\sV)).$$
    In particular, $P$ is a $d\th$ symmetric power of $\sV\sm n$.
\end{lemma}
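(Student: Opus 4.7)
The plan is to construct, uniformly in degree, a symmetric quasi power of $\sV \setminus n$ from $(\sV_1, \dots, \sV_d)$, and then use the variety characterization in \coloref{prop:symprodCharacterisations} to upgrade it to a genuine symmetric power. For each $1 \leq i \leq d$ I set
$$P_i := \sV_i \setminus (n \cdot \Sym_{i-1}([n])),$$
so that $P = P_d$ and $E(P_i) = \Sym_i([n-1])$. To verify the embedding axiom \coloref{eqn:symprodEmbedding} for the sequence $(P_1, \dots, P_d)$, fix $\alpha \in \Sym_{d-i}([n-1])$ not divisible by a loop of $\sV \setminus n$. Since any loop of $\sV \setminus n$ is also a loop of $\sV$, I can apply \coloref{eqn:symprodEmbedding} for $\sV_d$ to obtain the isomorphism $\sV_i \cong \sV_d|_\alpha$ via $x \mapsto \alpha x$. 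Because $\alpha$ is built entirely from elements of $[n-1]$, the monomial $\alpha x$ is divisible by $n$ if and only if $x$ is; the deletion defining $P$ therefore restricts through this isomorphism to the deletion defining $P_i$, yielding $P|_\alpha \cong P_i$. If instead $\alpha$ is divisible by a loop of $\sV \setminus n$, the same observation combined with \coloref{eqn:symprodLoops} for $\sV_d$ gives $\rk(P|_\alpha) \leq \rk(\sV_d|_\alpha) = 0$.

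Next I would establish the variety identity $V(P) = \pi_{[n-1]}(\trop(\sV))$ by a two-sided sandwich. For the inclusion $\pi_{[n-1]}(\trop(\sV)) \subseteq V(P)$, note that every circuit $C$ of $P$ is a circuit of $\sV_d$ disjoint from $n \cdot \Sym_{d-1}([n])$, so its circuit polynomial $p_C$ involves only the variables $x_1, \dots, x_{n-1}$. Since $\sV_d$ is a $d\th$ symmetric power of $\sV$, \coloref{prop:symprodCharacterisations} gives $V(\sV_d) = \trop(\sV)$, so for any $\omega \in \trop(\sV)$ the polynomial $p_C$ attains its minimum at least twice at $\omega$; because $p_C$ does not see the $n\th$ coordinate, that minimum depends only on $\pi_{[n-1]}(\omega)$, which therefore lies in $V(p_C)$. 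For the reverse inclusion I invoke the general containment (noted after \coloref{def:matroidVariety}) that the variety of any symmetric quasi power sits inside the tropical linear space of its rank-one piece. Applied to $P$ with $P_1 = \sV \setminus n$, this yields
$$V(P) \subseteq \trop(\sV \setminus n) = \pi_{[n-1]}(\trop(\sV)),$$
where the last equality is the standard identification of matroid deletion with coordinate projection of the tropical linear space (used in the same form in the proof of \coloref{thm:minorClosed}).

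To finish, I would apply \coloref{prop:symprodCharacterisations} a second time, now to the quasi power $P$ of $\sV \setminus n$. The identity $V(P) = \trop(\sV \setminus n)$ just established places $P$ in part (iii) of that proposition, which is equivalent to the rank condition \coloref{eqn:symprodRank} for $\sV \setminus n$. Thus $\rk(P) = \binom{\rk(\sV \setminus n) + d - 1}{d}$, and $P$ is a genuine $d\th$ symmetric power.

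The main obstacle is the reverse inclusion in the variety identity: the forward direction is a direct inspection of circuit polynomials, while the reverse relies on the general containment $V(M_d) \subseteq \trop(M_1)$ for any symmetric quasi power, together with the identification of deletion and projection for tropical linear spaces. Once the variety identity is in hand, both the quasi power verification and the rank conclusion cascade cleanly from \coloref{prop:symprodCharacterisations}.
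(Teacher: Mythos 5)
Your proposal is correct and follows essentially the same route as the paper's proof: the quasi-power property via restricting the isomorphism $x\mapsto\alpha x$ from \coloref{eqn:symprodEmbedding} (you verify it directly for each degree where the paper phrases it as an induction on $d$, but the computation is the same), the easy inclusion $V(P)\subseteq\trop(\sV\sm n)=\pi_{[n-1]}(\trop(\sV))$ from the quasi-power containment, the reverse inclusion by observing that circuit polynomials of $P$ are circuit polynomials of $\sV_d$ not involving $x_n$, and the final upgrade to a genuine symmetric power via \coloref{prop:symprodCharacterisations}. No gaps to flag.
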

\begin{proof} 
To show that $P$ is a symmetric quasi power we proceed by induction on $d$. Let $d=2$. Let $x\in [n-1]$. Then 
$$P|_x \cong \sV|_x\sm x\cdot n.$$
By \coloref{eqn:symprodEmbedding} and \coloref{rem:isoMinors}, we conclude $\sV|_x\sm x\cdot n \cong \sV\sm n$. Thus $P|_x\cong \sV\sm n$ for all $x\in [n-1]$ and $P$ is a second symmetric quasi product. Suppose for all $i<d$ that $P_i := \sV_i\sm (n\cdot \Sym_{d-1}([n])$ is an $i\th$ symmetric quasi power of $\sV\sm n$. Now let $\bfx \in \Sym_{d-i}([n])$ for $i\geq 1$. We have
\begin{align*}
    P|_{\bfx}   &= \sV|_{\bfx}\sm (\bfx\cdot n \cdot \Sym_{i-1}(E)) \\
                &= \sV_{i}\sm (n\cdot\Sym_{i-1}(E)\\
                &= P_i
\end{align*}
where this latter inequality is well defined by \coloref{rem:isoMinors}. In particular, $P$ is a symmetric quasi power of $P_1\cong \sV\sm n$, as desired. 

We now prove the latter claim. \coloref{prop:symprodCharacterisations} provides that $V(\sV_d)=\trop(\sV)$. Since $P$ is a symmetric quasi power of $\sV,$ $V(P)\subset \trop(V\sm n)$, which is equal to $\pi_{[n-1]}(\trop(\sV))$. It remains to prove the reverse inclusion. Let $w\in \pi_{[n-1]}(V(\sV_d))$. Then there is a $w' \in V(\sV_d)$ with $\pi_{[n-1]}(w')=w$. In particular, every circuit polynomial of $\sV_d$ attains its minimum twice or is equal to $\infty$ at $w'$. However, we note a circuit polynomial $f$ of $P$ is also circuit polynomial of $\sV_d$ and that $f$ attains its minimum twice or is equal to $\infty$ at $w'$ if and only if $f$ attains its minimum twice or is equal to $\infty$ at $w$. We conclude that $w\in V(P)$ so that $\pi_{[n-1]}(\trop(\sV))\subset V(P)$, as desired.
\end{proof}

\begin{corollary}\label{cor:minorClosed}
    The class of matroids with $d\th$ symmetric power is minor closed.
\end{corollary}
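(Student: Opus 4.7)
The plan is to reduce the corollary to single-element minors: by induction every minor is obtained from a chain of single-element deletions and contractions, so it suffices to show that whenever $\sV$ (on $[n]$) admits a $d$th symmetric power $(\sV_1,\dots,\sV_d)$, so do both $\sV\sm n$ and $\sV\contr n$.

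The deletion case is exactly \coloref{lem:geometricDeletion}, which produces the explicit sequence $P_i:=\sV_i\sm(n\cdot\Sym_{i-1}([n]))$ and verifies (a) that it is a symmetric quasi power of $\sV\sm n$ and (b) that $V(P_d)=\pi_{[n-1]}(\trop(\sV))=\trop(\sV\sm n)$, whence \coloref{rem:bfCharacterisation} forces the correct rank. So the content of the corollary reduces to the contraction case.

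For contraction, I would mirror the argument of \coloref{lem:geometricDeletion} with the candidate sequence
$$Q_i\;:=\;\sV_i\contr(n\cdot\Sym_{i-1}([n])),$$
a matroid on $\Sym_i([n-1])$. First I would verify that $(Q_1,\dots,Q_d)$ is a symmetric quasi power of $\sV\contr n$ by induction on $i$, exactly as done for $P_i$: for $\alpha\in\Sym_{d-i}([n-1])$ the restriction $Q_d|_\alpha$ is a minor of $\sV_d|_\alpha$ obtained by contracting the elements divisible by both $\alpha$ and $n$, and \coloref{rem:isoMinors} together with the inductive hypothesis identifies this with $Q_i$. This also gives $Q_1=\sV\contr n$. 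Second, I would verify the rank condition via \coloref{rem:bfCharacterisation}, i.e. show $V(Q_d)=\trop(\sV\contr n)$. The inclusion $V(Q_d)\subseteq V(Q_1)=\trop(\sV\contr n)$ is immediate since $Q_d$ is a symmetric quasi power of $Q_1$. For the reverse inclusion, I would argue that every circuit polynomial of $Q_d$ arises from a circuit polynomial of $\sV_d$ with the variable $x_n$ specialized to $\infty$ (mirroring how \coloref{thm:minorClosed} realizes contraction at the ideal level), so a point $w\in\trop(\sV\contr n)$ lifts to $w'=(w,\infty)\in\trop(\sV)=V(\sV_d)$ on which every relevant circuit polynomial attains its minimum twice or equals $\infty$.

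The main obstacle is this last variety computation. Deletion is clean because the circuit polynomials of the deleted symmetric power are literally circuit polynomials of $\sV_d$ with monomials set to $\infty$, and projection commutes cleanly with intersection. Contraction is subtler because circuits of $Q_d$ are not generally circuits of $\sV_d$, so the correspondence between circuit polynomials requires more care. If a direct lift of the proof of \coloref{lem:geometricDeletion} proves awkward, the backup plan is to establish only the inclusion $V(Q_d)\supseteq\trop(\sV\contr n)$ up to dimension, and then invoke \coloref{cor:minkWeights}: once $V(Q_d)$ and $\trop(\sV\contr n)$ are known to be equidimensional tropical subvarieties with $V(Q_d)\subseteq\trop(\sV\contr n)$, the rigidity of top-dimensional Minkowski weights on the tropical linear space $\trop(\sV\contr n)$ forces equality, and \coloref{rem:bfCharacterisation} then gives the full rank of $Q_d$.
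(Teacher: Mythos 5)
Your proposal follows essentially the same route as the paper: its proof of this corollary simply reruns the argument of \coloref{thm:minorClosed} at the level of a single symmetric power, with deletion handled by \coloref{lem:geometricDeletion} and contraction handled by specializing $x_n=\infty$ (i.e.\ passing to $Q_i=\sV_i\contr(n\cdot\Sym_{i-1}([n]))$) and lifting points $w\mapsto(w,\infty)$, which is precisely your plan written out in more detail. The only step to treat carefully --- and the paper leaves it just as implicit --- is your identification of $Q_d|_{\alpha}$ with the minor of $\sV_d|_{\alpha}$ obtained by contracting the elements divisible by $\alpha n$, since restricting after contracting the full set $n\cdot\Sym_{d-1}([n])$ does not formally coincide with contracting only its part inside $\alpha\cdot\Sym_i([n])$ and must be justified using the full-rank symmetric power structure; this is a refinement of the paper's argument rather than a departure from it.
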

\begin{proof}
    The result follows from the argument used in \coloref{thm:minorClosed}, replacing \cite{MR20}*{Theorem 4.7} with \coloref{lem:geometricDeletion}.
\end{proof}

\begin{corollary}
    The class of tropically realizable matroids has infinitely many forbidden minors.
\end{corollary}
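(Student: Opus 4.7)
The plan is to leverage \coloref{thm:linearMatroidCorres}, which identifies tropically realizable matroids with those admitting symmetric powers of all degrees, together with the infinite family of non-realizable matroids $M_r := M(r) \oplus U_{2,3}$ for $r \geq 4$ that appears in the proof of \coloref{thm:sympowForbiddenMinors}. Each $M_r$ fails to have a second symmetric power (by the Draisma--Rinc\'on result cited there applied to $M(r)$ and $U_{2,3}$, combined with \coloref{prop:VxU23}), so by \coloref{thm:linearMatroidCorres} it is not tropically realizable. Since the class of tropically realizable matroids is minor closed by \coloref{thm:minorClosed}, each $M_r$ contains at least one forbidden minor $N_r$ for tropical realizability.

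To control the $N_r$, I would first record the auxiliary observation that every classically realizable matroid is tropically realizable: if $L \subset k^n$ realizes $M$ over a non-archimedean valued field $k$, then the tropicalization of the linear ideal $I(L) \subset k[x_1, \dots, x_n]$ is a linear tropical ideal whose degree-one component tropically realizes $M$. Consequently each $N_r$, being not tropically realizable, must be non-realizable over every field.

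The final step is to separate the $N_r$ across $r$, and here I would simply recycle the circuit-size invariant from the proof of \coloref{thm:sympowForbiddenMinors}. Any non-realizable minor of $M_r$ must retain the obstruction coming from $M(r)$ and therefore contains circuits of sizes $r$ and $r+1$, together with at most one circuit of size at most $3$ inherited from the $U_{2,3}$ summand. If $N_r$ were isomorphic to $N_{r'}$ for some $r \neq r'$, it would sit as a common non-realizable minor of both $M_r$ and $M_{r'}$, contradicting this invariant. Hence $\{N_r\}_{r \geq 4}$ is an infinite family of pairwise distinct forbidden minors for tropical realizability. The main subtlety is the passage ``realizable over some field $\Rightarrow$ tropically realizable''; once this is verified, the argument is essentially a direct transfer of the forbidden-minor construction from the symmetric-power setting via \coloref{thm:linearMatroidCorres}.
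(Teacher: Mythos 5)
Your argument is correct and is essentially the paper's own reasoning unpacked: the paper disposes of this corollary as an immediate consequence of \coloref{thm:sympowForbiddenMinors}, and the details it leaves implicit are exactly the ones you supply (non-realizability of $M(r)\oplus U_{2,3}$ for tropical realizability via \coloref{thm:linearMatroidCorres}, minor-closedness from \coloref{thm:minorClosed}, realizable $\Rightarrow$ tropically realizable, and the circuit-size separation of the resulting excluded minors). No gaps; your explicit treatment of the ``realizable over a field implies tropically realizable'' step is a sensible addition rather than a deviation.
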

\begin{proof}
    This is an immediate consequence of \coloref{thm:sympowForbiddenMinors}.
\end{proof}



\subsection{Tropically Algebraic and Set-realizable Matroids}\label{subsec:A&StRealizable}

    Matroids appear in many ways in the study of tropical ideals. In the previous sections we considered only linear tropical ideals, whose variety is a tropical linear space with weights $1$. However, we can just as easily ask about tropical linear spaces balanced with weights other than $1$. This remains largely an unexplored avenue of research. A valuated matroid $\sV$ is called \textbf{tropically set-realizable} if $\trop(\sV)$ is the support of $V(I)$ for some tropical ideal $I$. That is, we consider equality of $\trop(\sV)$ with $V(I)$ as sets with no polyhedral structure. See \cite{Yu17} for initial work in this direction. 
    
    Stemming from tropical set-realizability is also the study of \textbf{tropically algebraic matroids} --- matroids that appear as the independence complex of a tropical ideal, as in \coloref{def:independenceComplex}. Tropically algebraic matroids were first discussed in \cite{DR19} but there is yet any work exploring this topic.
    
    Restricting to the case of matroids rather than valuated matroids, we observe that the classes of tropically realizable, tropically set-realizable, and tropically algebraic matroids form nested classes of matroids in increasing order. There are yet any proofs that any of these containment relations is strict. However, in this section we conclude that all three classes are minor closed.

\begin{proposition}
The class of tropically set-realizable valuated matroids is minor closed.
\end{proposition}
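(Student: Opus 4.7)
The plan is to adapt the argument of \coloref{thm:minorClosed} essentially verbatim, but now tracking only the underlying point set of the tropical varieties rather than their weighted polyhedral structure. Suppose $\sV$ is a tropically set-realizable valuated matroid on $[n]$, so there exists a tropical ideal $I\subset \T[x_1,\dots,x_n]$ such that the support of $V(I)$ equals $\trop(\sV)$. We want to exhibit tropical ideals whose varieties have support equal to $\trop(\sV\contr n)$ and $\trop(\sV\sm n)$, which will suffice by symmetry over the ground set.

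First I would handle contraction. Set $J := I|_{x_n = \infty}$; this specialization is a tropical ideal in $\T[x_1,\dots,x_{n-1}]$ by standard properties of tropical ideals under coordinate specialization. From the definitions of tropical variety and specialization one has the set equality
\[
|V(J)| \;=\; \pi_{[n-1]}\bigl(|V(I)|\cap \{x_n=\infty\}\bigr),
\]
exactly as observed in the proof of \coloref{thm:minorClosed}. Substituting $|V(I)|=\trop(\sV)$ and using the standard identity $\trop(\sV\contr n) = \pi_{[n-1]}(\trop(\sV)\cap\{x_n=\infty\})$ yields $|V(J)| = \trop(\sV\contr n)$, so $\sV\contr n$ is tropically set-realizable.

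Next I would handle deletion. Let $J := I\cap \T[x_1,\dots,x_{n-1}]$; this is again a tropical ideal. By \cite{MR20}*{Theorem 4.7} we have $|V(J)| = \pi_{[n-1]}(|V(I)|) = \pi_{[n-1]}(\trop(\sV)) = \trop(\sV\sm n)$, so $\sV\sm n$ is tropically set-realizable. Iterating single-element deletions and contractions shows the class is closed under all minors.

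I do not expect a real obstacle here: the only substantive inputs are that specialization and elimination preserve tropical ideals, that their varieties behave as expected with respect to projection/intersection with coordinate hyperplanes (both already invoked in \coloref{thm:minorClosed}), and that these same geometric operations on a tropical linear space produce the tropical linear space of the corresponding matroid minor. What makes the set-realizable case strictly easier than \coloref{thm:minorClosed} is that we never need to compare $V(J)$ with $V(J_1)$ or control Hilbert functions; weights and polyhedral structure are irrelevant, and only the set-level identities on supports are used.
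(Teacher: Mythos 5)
Your argument is correct and tracks the paper's own proof, which simply observes that the proof of \coloref{thm:minorClosed} goes through once the requirement $I_1 = \sV$ is dropped and one only compares supports. Your version makes this explicit by isolating the set-level identities on $|V(J)|$ for the specialization and elimination ideals, which is exactly the part of the \coloref{thm:minorClosed} argument that survives without weights.
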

\begin{proof}
The exact proof of \coloref{thm:minorClosed} applies here without the requirement that $I_1=M$, as the weights associated to the maximal cones are no longer considered.
\end{proof}

While minor-closedness of tropically set-realisable matroids follows the same proof as used for tropically realizable matroids, tropically algebraic matroids require a slightly different approach.

If the independence complex of $I$ forms the independent sets of a Boolean matroid $M$ we say that $I$ is a matroidal tropical ideal. Furthermore, if there is a tropical ideal with $\sI(I)=M$ then we say that $M$ is tropically algebraic. Algebraic matroids are tropically algebraic \cite{Yu17}*{Lemma 3}, but it is still open whether or not there is a tropically algebraic matroid that is not algebraic.

\begin{proposition}\label{prop:algMinorClosed}
The class of tropically algebraic matroids is minor closed.
\end{proposition}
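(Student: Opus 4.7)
The plan is to mirror the proof of \coloref{thm:minorClosed}, but now argue entirely at the level of the independence complex of a general tropical ideal rather than a linear one. Writing $M = \sI(I)$ for some tropical ideal $I\subset \T[x_1,\ldots,x_n]$, it suffices to show that for each $e\in [n]$ both $M\sm e$ and $M\contr e$ are tropically algebraic, since any minor is obtained by iterating these single-element operations.

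For the deletion I take $J := I \cap \T[x_i : i\in [n]\sm e]$, which is a tropical ideal on $[n]\sm e$ by \cite{MR20}*{Theorem 4.7}. The tautological identity $J \cap \T[x_i : i\in A] = I \cap \T[x_i : i\in A]$ for $A\subset [n]\sm e$ yields $\sI(J) = \sI(M)\cap 2^{[n]\sm e} = \sI(M\sm e)$, so $M\sm e$ is tropically algebraic.

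For the contraction I take $K := I|_{x_e=\infty}$, the specialization of $I$ at $x_e=\infty$, which is again a tropical ideal, now on $[n]\sm e$. The goal is to verify
$$K\cap \T[x_i : i\in A] = \{\infty\} \iff I \cap \T[x_i : i\in A\cup e] = \{\infty\}$$
for all $A\subset [n]\sm e$; this would give $\sI(K) = \sI(M\contr e)$. The straightforward step here is that any polynomial $f\in I\cap \T[x_i : i\in A\cup e]$ with at least one term not divisible by $x_e$ specializes to a nonzero $f|_{x_e=\infty}\in K\cap \T[x_i : i\in A]$.

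The main obstacle will be controlling two symmetric pathological cases. In one direction, every witness $f\in I\cap \T[x_i: i\in A\cup e]$ might be divisible by $x_e$, so that direct specialization yields only $\infty$. In the other, a witness $g\in K\cap\T[x_i:i\in A]$ lifts to some $f\in I$ with $f|_{x_e=\infty}=g$, but nothing guarantees $f\in \T[x_i:i\in A\cup e]$ since the $x_e$-divisible terms of $f$ may involve variables outside $A\cup e$. Handling these cases will require either passing to an appropriate saturation of $I$ with respect to $x_e$ (and confirming that it preserves the relevant independence data on $[n]\sm e$), or switching to the geometric characterization $\sI(I) = \{A : \pi_A(V(I)) = \T^A\}$ and exploiting the polyhedral balanced structure of $V(I)$ to convert between slice statements on $V(I)\cap\{x_e=\infty\}$ and projection statements onto $\T^{A\cup e}$. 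Adapting the argument of \coloref{thm:minorClosed} to this nonlinear setting is where the bulk of the work will lie.
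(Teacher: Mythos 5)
The deletion half of your argument is fine and matches the paper essentially verbatim — the tautological identity $J\cap\T[x_i:i\in A]=I\cap\T[x_i:i\in A]$ for $A\subset[n]\sm e$ does the job with no need for the geometric characterization.

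The contraction half has a genuine gap, and you have correctly diagnosed it but not repaired it. The repair is to specialize at $x_e=0$ rather than at $x_e=\infty$, and the paper explicitly does this (and even remarks afterward on the distinction from the linear setting, where both choices happen to work). Setting $x_e=0$ in min-plus does not annihilate any monomial: it simply strips the $x_e$ factor from each term, leaving the coefficient and all exponents on the other variables untouched. Consequently, if $g=f|_{x_e=0}$ lies in $\T[x_i:i\in A]$, then every term of $f$ must already have exponent $0$ on all variables outside $A\cup\{e\}$, i.e.\ $f\in\T[x_i:i\in A\cup e]$; this is exactly the implication that fails for $x_e=\infty$ because $x_e$-divisible terms of $f$ are simply erased and can involve arbitrary variables. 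With $x_e=0$ both of the ``symmetric pathological cases'' you worry about disappear: a nonzero witness $f\in I\cap\T[x_i:i\in A\cup e]$ specializes to a nonzero $f|_{x_e=0}\in J\cap\T[x_i:i\in A]$ (no term is killed, and any collision of monomials takes a tropical $\min$ of finite coefficients), and conversely any nonzero $g\in J\cap\T[x_i:i\in A]$ lifts to a nonzero $f\in I\cap\T[x_i:i\in A\cup e]$ by the support observation just made. One further small point you omit: the $x_e=0$ specialization misbehaves when $e$ is a loop of $\sI(I)$ (then $I$ contains a nonzero polynomial in $x_e$ alone, which specializes to a nonzero constant and collapses the independence complex), so the loop case must be treated by deletion instead, which the paper does. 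Your fallback ideas (saturation, or the geometric reformulation) are both more machinery than needed; the $x_e=0$ trick is the intended fix.
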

\begin{proof}
Suppose $M$ is a tropically algebraic matroid, and let $I$ be a tropical ideal whose independence complex is $M$. Let without loss of generality we delete $n$. We claim that $J:=I\cap \T[x_1,\dots,x_{n-1}]$ has $\mathcal I(J) = M\sm n$. To see this, we first note \cite{MR20}*{Theorem 4.7} provides that
$$V(J)= \pi_{[n-1]}(V(I)).$$

A set $T$ is independent in $M\sm n$ if and only if $T$ is independent in $M$ and does not contain $n$. Thus we must show that $\pi_T(V(J))=\T^T$ if and only if $\pi_T(V(I))=\T^T$. To see this, we need only observe that $\pi_T(V(J))=\pi_T(\pi_{[n-1]}(V(I)))=\pi_T(V(I))$, as desired.

To prove contraction, if $n$ is a loop of $M$, then $M\contr n =M\sm n$, so we choose to delete $n$ instead and do the above. Otherwise $n$ is not a loop and we do the following. Consider the specialization $J:= I|_{x_n=0}$. The independence complex of $J$ is 
$$\mathcal I(J) = \set{T\subset [n-1] | J \cap \R[\set{x_i}_{i\in T}] = \set{\infty}}.$$

We claim $\mathcal I(J) = \mathcal I(M\contr n)$. In particular, we must show that $T\in \mathcal I(J)$ if and only if $T\cup i$ is independent in $\mathcal I(I)$. Since $\sI(J)$ is a simplicial complex, it is sufficient to prove this for its bases.

First let $B$ be a maximal subset of $\mathcal I(J)$. We claim $B\cup n$ is in $\mathcal I(I)$. Suppose not. Then there is a nonzero polynomial $f$ in $I\cap \T[x_i: i \in B\cup n]$. This implies that $f|_{x_n=0}\neq \infty$, where $f|_{x_n=0}\in J$, a contradiction. Thus $\mathcal I(J)\subset \mathcal I(M\contr n)$

That $\mathcal I(M\contr n)\subset \mathcal I(J)$ is immediate, as if $B\in \mathcal I(M\contr n)$ is a basis, we have that $B\cup \in \mathcal I(M)=\mathcal I(I)$, in particular, $I\cap \T[x_i: i\in (B\cup n)]=\set{\infty}$, and thus $J\cap \T[x_i: i\in B]=\set{\infty}$. This concludes the proof.
\end{proof}

In the proof of \coloref{prop:algMinorClosed}, we specialize our ideals at $x_n=0$, rather than $x_n=\infty$. However, in the case where $V(I)=\trop(\sV)$ for some valuated matroid $\sV$, both specialization at $x_n=0$ and $x_n=\infty$ yield tropical ideals that tropically algebraically represent $\mu\contr n$.





\bibliographystyle {alpha}
\bibliography{MatroidProducts}

\end{document}